\tikzstyle{white dot}=[fill=white, draw=black, shape=circle]
\tikzstyle{black dot}=[fill=black, draw=black, shape=circle]
\tikzstyle{new edge style 0}=[-, draw={rgb,255: red,208; green,208; blue,208}]
\tikzstyle{new edge style 1}=[-, draw={rgb,255: red,0; green,128; blue,128}]
\tikzstyle{new edge style 2}=[-, draw={rgb,255: red,255; green,128; blue,0}]
\tikzset{
hatch size/.store in=\hatchsize,
hatch angle/.store in=\hatchangle,
hatch line width/.store in=\hatchlinewidth,
hatch size=5pt,
hatch angle=0pt,
hatch line width=.5pt,
}
\newcommand{\E}{\mathbb E}
\newcommand{\A}{\mathbb A}
\newcommand{\Ale}{\mathcal A}
\newcommand{\R}{\mathbb R}
\renewcommand{\d}{\mathrm{d}}
\newcommand{\T}{\intercal}
\newcommand{\pf}{\mathrm{Pf}}
\newcommand{\CC}{\mathbb{C}}
\newcommand{\EE}{\mathbb{E}}
\newcommand{\HH}{\mathbb{H}}
\newcommand{\NN}{\mathbb{N}}
\newcommand{\PP}{\mathbb{P}} 
\newcommand{\RR}{\mathbb{R}}
\newcommand{\ZZ}{\mathbb{Z}}
\newcommand{\cF}{\mathcal{F}}
\newcommand{\cG}{\mathcal{G}}
\renewcommand{\a}{\alpha}
\renewcommand{\d}{\delta} 
\newcommand{\G}{\mathcal{G}}
\newcommand{\Om}{\Omega}
\newcommand{\om}{\omega}
\newcommand{\eps}{\varepsilon}
\newcommand{\tr}{\mathrm{tr}}
\newcommand{\zp}{z_{\partial}}
\newcommand{\tK}{\tilde{K}}
\newcommand{\tS}{\tilde{S}}
\newcommand{\bK}{\bar{K}}
\newcommand{\wtilde}{\widetilde}
\newcommand{\ol}{\overline}
\newcommand{\Ged}{\mathcal{G}_{\varepsilon}^r}
\newcommand{\Ge}{\mathcal{G}_{\varepsilon}}
\newcommand{\Ud}{U^r}
\newcommand{\Ue}{U_{\varepsilon}}
\newcommand{\Uer}{U_{\varepsilon}^r}
\newcommand{\Il}{\mathcal{I}^{\lambda}}
\newcommand{\Ee}{\mathbbm{E}_{\eps}}
\newcommand{\nez}{n_{\eps}(z)}
\newcommand{\oez}{o_{\eps}(z)}
\newcommand{\Z}{\mathcal{Z}}
\newcommand{\SL}{\mathrm{SL}}
\newcommand{\one}{\hbox{\rm 1\kern-.27em I}}
\newcommand{\sgn}{\text{sgn}}
\newcommand{\be}{\begin{equation}}
\newcommand{\ee}{\end{equation}}
\newcommand{\bestar}{\begin{equation*}}
\newcommand{\eestar}{\end{equation*}}
\newcommand{\bee}{\begin{align*}}
\newcommand{\eee}{\end{align*}}
\newtheoremstyle{slthm}
{}
{\baselineskip}
{\slshape}
{\parindent}
{\scshape}
{.}
{ }
{}
\theoremstyle{slthm}
\newtheorem{definition}{Definition}[section]
\newtheorem{theorem}[definition]{Theorem}
\newtheorem{proposition}[definition]{Proposition}
\newtheorem{lemma}[definition]{Lemma}
\newtheorem{corollary}[definition]{Corollary}
\newtheorem{remark}[definition]{Remark}
\newtheorem{example}[definition]{Example}
\title{Conformally invariant boundary arcs in double dimers}
\author[Marcin Lis, Lucas Rey and Kieran Ryan]{Marcin Lis, Lucas Rey and Kieran Ryan \\ 
\emph{\lowercase{(with an appendix by \uppercase{A}velio \uppercase{S}ep\'ulveda)}}}
\thanks{Marcin Lis: Vienna University of Technology, marcin.lis@tuwien.ac.at}
\thanks{Lucas Rey: CEREMADE, Universit{\'e} Paris-Dauphine, Université PSL, CNRS, 75016 Paris, France and DMA, {\'E}cole normale sup{\'e}rieure, Universit{\'e} PSL, CNRS, 75005 Paris, France, lucas.rey@dauphine.psl.eu}
\thanks{Kieran Ryan: Vienna University of Technology, kieran.ryan@tuwien.ac.at}
\date{\today}
\begin{document}

\maketitle

\begin{abstract}
     We consider two different versions of the double dimer model on a planar domain, where we either fold a single dimer cover on a symmetric domain onto itself across the line of symmetry, or we superimpose two independent
     dimer covers on two, almost identical, domains that differ only on a certain portion of the boundary. This results in a collection of loops and doubled edges that, unlike in the classical double dimer case of Kenyon, are accompanied by arcs emanating from the line of symmetry or the chosen portion of the boundary. We argue that these arcs together with the associated height function satisfy a discrete version of the coupling of Qian and Werner
     between the Arc loop ensemble (ALE) and two different variants of the Gaussian free field (with Dirichlet and Neumann boundary conditions).
     We also show that certain statistics of the arcs (when the loops are disregarded from the picture) converge to conformally invariant quantities in the small-mesh scaling limit, and moreover the limits are the same for the two versions of the model, and equal to the corresponding statistics of the arc loop ensemble (ALE). This gives evidence to the conjecture of~\cite{BLQ} (that concerns one of these models).
	\end{abstract}

\section{Introduction}
\subsection*{Motivation}
We consider two different but closely related versions of the double dimer model defined on planar graphs, that give rise not only to a collection of loops and doubled edges as in the original work of Kenyon~\cite{Ken14} but also to a collection of arcs emanating from and ending at a chosen piece of the boundary (see Figure~\ref{fig:Temperley}). These arcs are a new feature of the model that does not appear in the original double dimers,
and our main goal is to study their statistics (when the loops are disregarded from the picture). One of these two constructions was described in~\cite{BLQ}, where a conjecture about the scaling limit of the arcs was stated
claiming that the full collection of arcs should converge to the so-called \emph{arc loop ensemble (ALE)} introduced by Aru, Sep\'ulveda and Werner in~\cite{ASW}.
Here we develop the methods of~\cite{Ken14} to fit this setup and use them to give (partial) evidence for this conjecture. To be precise, we compute the scaling limits of certain observables of the arcs themselves, and they turn out to be conformally invariant and agree with the analogous quantities defined directly for the~ALE. 

Our motivation (outside of the dimer model) comes from the theory of the continuum \emph{Gaussian free field (GFF)} and its level sets.
The GFF with \emph{Dirichlet} boundary conditions in a domain $D$ is a Gaussian field whose covariance kernel is given by the Green function $g(x,y)$ of Brownian motion in the domain with Dirichlet boundary condition. In this article we assume that the Green function is normalised so that $g(x,y)\sim(-2\pi)^{-1} \log|x-y|$ as $x\to y$ in the interior of the domain.
 We are guided by the construction of Qian and Werner~\cite{QW} who coupled two different versions of the
GFF, with Dirichlet and \emph{Neumann (free)} boundary conditions (we refer the reader to~\cite{QW} for a definition of the latter), through a common set of level lines that form an ALE. 

Even though the GFF is not a function, but only a generalised function (a Schwartz distribution acting on test functions), a beautiful theory of its level sets has been developed in recent years~\cite{ASW, AruSep, FPS}. 
An important notion in this geometric representation of the GFF is that of the two-valued sets $\mathbb A_{-a, b}$, where $a,b>0$, $a+b\geq 2\lambda$, and where $\lambda = \sqrt{{\pi}/8}$
is the constant appearing in the height-gap phenomenon of Schramm and Sheffield~\cite{SchShe}. 
Formally they are thin local sets in the sense of~\cite{ASW} whose associated harmonic function takes only two values $-a$ and $b$. Heuristically they are random sets naturally coupled with the GFF
that could be seen as sets of those points in $D$ that are connected to $\partial D$ by a path on which the values of the field (which is not defined pointwise)
lie between $-a$ and $b$. The ALE is then the set of arcs given by the (restriction to $D$ of the) connected components of $A_{-\lambda,\lambda}$. In~\cite{QW} a surprising coupling between the ALE and two different types of the GFF was given. In this construction, the Dirichlet GFF (with zero boundary conditions) changes its value by $\pm 2\lambda$ across the arcs of the ALE in a deterministic and alternating fashion, whereas the Neumann (free boundary condition) GFF changes its value by $\pm 2\lambda$ in an i.i.d. fashion.

\begin{figure}
     \begin{overpic}[abs,unit=1mm,scale=0.8]{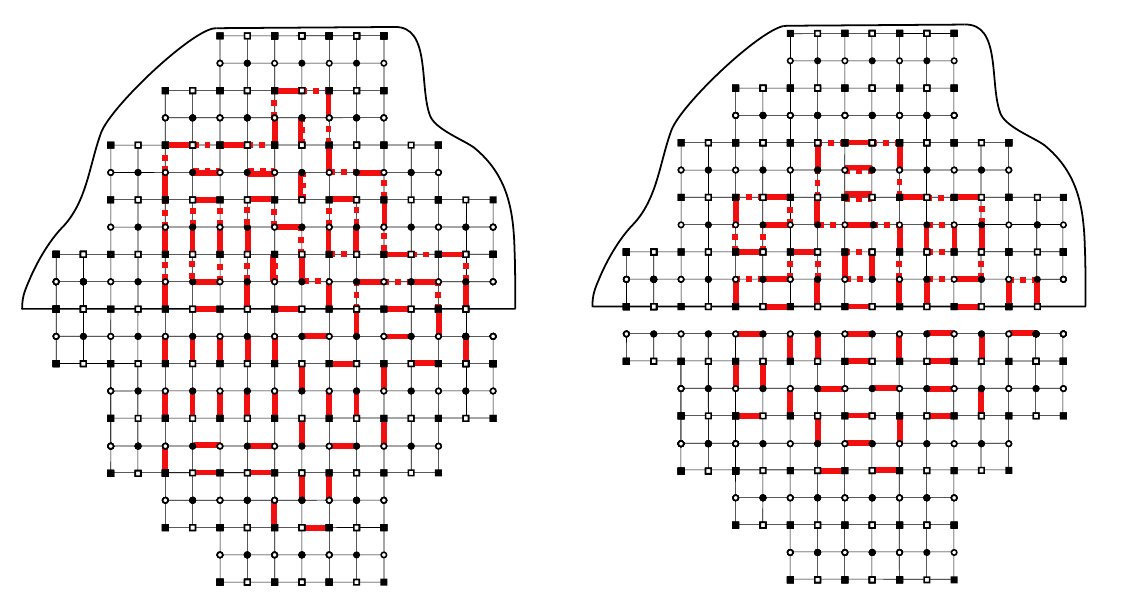}
        \put(8,60){U}
        \put(85,60){U}
     \end{overpic}
     \caption{\textbf{Left:} a (partial) dimer cover of $\Ge^r$ is represented in red, and the lower part is folded on the upper part in dotted red. A folded dimer configuration appears on the upper part: loops and arcs alternate between full red and dotted red. The graph $\Ge^r$ has Temperleyan boundary conditions: all its corners are black squares, and one of the black squares on the boundary (in this case on the right-hand side of the intersection with the real line) is removed. 
     \textbf{Right:} (partial) dimer covers of $\Ge^r \cap (\RR \times \RR_{\geq 0})$ and $\Ge^r \cap (\RR \times \RR_{<0})$ are represented in solid red. The lower part is superimposed on the upper part in dotted red. A superimposed configuration appears on the upper part: loops and arcs alternate between full red and dotted red. 
     The upper part has again Temperleyan boundary conditions, and the lower part has piecewise Temperleyan boundary conditions with two white bullet corners.
     \\
     The sets of vertices $W_0$ and $B_0$ are represented by white and black bullets, $W_1$ and $B_1$ are represented by white and black squares. }
     \label{fig:Temperley}
\end{figure}


\subsection*{Discrete models and couplings}
In this article we consider a set of discrete arcs in two different but related versions of double dimers that satisfy an analogous coupling when considered together with the associated height function (that is known to converge to the Gaussian free field in a variety of settings~\cite{Ken01,Russ,BLR,DCLQ1}). 
To be precise, recall that the \emph{dimer model} is a random perfect matching (which in the setting of this work is chosen uniformly at random) of a finite graph. We will assume that the graph is a finite connected subgraph of the rescaled square lattice
$\eps \mathbb Z^2$ that is symmetric across the real line, and whose restriction to the upper half-plane approximates a simply connected continuum domain $U$. We will denote such graphs by $\Ge^r$.\\

In the first setup, referred to as \emph{folded dimers}, that corresponds to the Neumann GFF in the coupling of~\cite{QW}, we consider the dimer model on $\Ge^r$, and fold the resulting perfect matching along the real line as in Fig.~\ref{fig:Temperley}. This results in a collection of loops, doubled edges, and arcs starting
and ending on the real line. The associated height function on the faces of the folded graph can be defined (up to a global additive constant) by assigning orientations to each 
dimer in the folded dimer cover: each dimer coming from the top half of $\Ge^r$ is oriented from a black to a white vertex (in a fixed bipartite colouring of the lattice), and each dimer coming from the bottom half is oriented from 
a white vertex to a black vertex. This results in a consistent orientation of each loop and arc, and finally one declares that the increment of the height function from a face $u$ to a neighbouring face is $+1$ (resp. $-1$) if there is 
a loop or an arc separating the two faces such that $u$ lies on the left-hand (resp. right-hand) side of the oriented dimer in the loop or arc.
Note each loop and arc can be oriented in the two possible ways with equal probability 
and independently of other loops and arcs. This corresponds to the increment of the height function being $\pm 1$, and is analogous to the coupling of Qian and Werner between the ALE and the GFF with mixed Neuman/Dirichlet boundary conditions.

One can see that the height function in this setting does indeed converge to the continuum GFF with mixed Neuman/Dirichlet boundary conditions. Indeed, it is known from the original work of Kenyon~\cite{Ken01} (and more recent generalisations~\cite{Russ,BLR}) that if the dimer model on $\Ge^r$ has \emph{Temperleyan} boundary conditions (which we assume to be the case in our arguments as is shown in Fig.~\ref{fig:Temperley} and
Fig.~\ref{fig:Kasteleyn}), then the \emph{centered} height function of the dimer model on $\Ge^r$, multiplied by $2\lambda$ to match the height gap across the continuum level lines, converges as $\varepsilon \to 0$
to $1/\sqrt2$ times the Dirichlet GFF in the symmetric domain $U^r$ whose restriction to the upper half plane is~$U$ (i.e. $U^r=U\cup \bar U \cup (\partial U \cap (\{ 0\} \times \mathbb R))$).
We note that here we take into account the fact that the height function of a single dimer model defined in~\cite{Ken01} is $4$ times the height function we are using in this article. From this definition it is clear that the expectation of the height function on~$\Ge^r$ is antisymmetric across the real line, and that the height function at a face $u$ in the
folded dimer model is by definition equal to the sum of the height functions at $u$ and at its reflection $\bar u$ in the single dimer model on~$\Ge^r$. Since the continuum GFF in $U$ with Neumann boundary conditions 
on the real line and Dirichlet boundary conditions on the rest of $\partial U$ can be obtained in the same way from $1/\sqrt 2$ times the Dirichlet GFF in $U^r$ (as e.g. described in~\cite{QW,BLQ}), we can conclude that $2\lambda$ times the height function 
defined by the discrete loops and arcs converges to exactly the same GFF with mixed boundary conditions as in the coupling of~\cite{QW}.\\

In the second setup, which we refer to as \textit{shifted dimers}, we first remove the row of vertical edges of $\Ge^r$ just below the real line (see Fig.~\ref{fig:Temperley}) and then repeat the folding procedure as in the first setup. This results in 
a number of differences compared to the previous case. The first one is that now 
the arcs emanating from the real line cannot have arbitrary orientations. Indeed, one quickly realises that due to parity reasons the arcs must deterministically alternate in orientation -- just as in the part of the coupling in~\cite{QW} concerning the Dirichlet GFF. Moreover, after the removal of edges the graph splits into two connected components $\Ge^r \cap (\RR \times \RR_{\geq 0})$ and $\Ge^r \cap (\RR \times \RR_{<0})$.
One can check that the the upper component has again Temperleyan boundary conditions, whereas the lower one has \emph{piecewise Temperleyan} boundary conditions (see Fig.~\ref{fig:Temperley}) introduced by Russkikh~\cite{Russ}, who proved convergence of $2 \lambda$ times the \emph{centered} height function to $1/\sqrt 2$ times the Dirichlet GFF. Later Berestycki and Liu showed~\cite[Theorem 5.1]{BerLiu} that the mean 
of $2\lambda$ times the height function converges to 
a harmonic function with piecewise constant boundary conditions with alternating jumps of size $\pm \lambda$ located at the points of the \emph{Temperleyan corners} 
(in our case these are the two extremities of $\partial U\cap (\{0\} \times \mathbb R)$) plus a term that involves the winding of the boundary of $U$.
Since $U$ and $\bar U$ have opposite windings of the boundary, they cancel out when the two height functions are added, and we can conclude that
the mean of $2\lambda$ times the height function of the superimposed configuration converges to the harmonic function in $U$ with $\lambda$ boundary conditions on $\partial U\cap (\{0\} \times \mathbb R)$ and zero elsewhere. 
Since the sum of two independent fields that are both $1/\sqrt{2}$ times the Dirichlet GFF (coming from the upper and lower domains) is ($1$ times) the Dirichlet GFF, we can conclude that
$2\lambda$ times the height function of our model converges to the corresponding field described in~\cite{QW}, i.e. a GFF in $U$ with boundary conditions equal to $\lambda$ on 
$\partial U\cap (\{ 0\} \times \mathbb R)$ and zero on the rest of $\partial U$.\\

We conjecture that the arcs in both models converge to the ALE in $U$ (emanating from $\partial U \cap (\{0 \}\times \mathbb R)$ as in~\cite{QW}). This extends the conjecture of~\cite{BLQ} which is the same statement only for the first ``folded'' model.
Our conjecture is strongly evidenced by the discussion above. However, convergence of the height function is known not to be enough to conclude convergence of interfaces in the double dimer model~\cite{Ken14,DubDD,BasChe}.
Finally let us mention that the two models yield different measures in the discrete setting, when one forgets the orientation of loops and arcs. Indeed, in the first model each nontrivial arc (that is not a single edge)
comes with a combinatorial factor of two corresponding to the two different orientations, and it comes with a factor of one in the second model. 
The number of nontrivial arcs is not constant (unlike the number of all arcs) and hence the measures are different. Nonetheless we conjecture that the two collections of arcs have the same scaling limit.

\subsection*{Main results}
The main observables of interest in both models will be $\nez$ -- the number of arcs that surround a point $z$, i.e.~arcs 
that separate $z$ from the part of the boundary with no arcs~$\partial U \setminus  (\{0\} \times \mathbb R)$, and $o_\varepsilon(z)$ -- the parity of $n_\varepsilon(z)$.
Our main result can be summarised as follows.
\begin{theorem}
In both models, as $\varepsilon \to 0$ the laws of $\nez$ and $o_\varepsilon(z)$ converge in distribution to random variables $n(z)$ and $o(z)$ 
whose laws are independent of the model and moreover are conformally invariant.
\end{theorem}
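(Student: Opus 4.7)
The plan is to identify the characteristic function $\EE[e^{i\theta \nez}]$ in both models with a ratio of modified Kasteleyn-type partition functions, pass to the $\eps \to 0$ limit using the established convergence of inverse Kasteleyn matrices on (piecewise) Temperleyan domains, and verify that the limit is a conformally invariant functional of $(U,z)$ that does not depend on the model.

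First, fix any dual path $\gamma$ from $z$ to the ``outer'' boundary $\partial U \setminus (\{0\} \times \RR)$. By a standard topological argument, each arc surrounding $z$ crosses $\gamma$ an odd number of times while each arc not surrounding $z$ crosses it an even number of times. Placing a phase $e^{i\theta/2}$ on the edges of a lift of $\gamma$ to $\Ge^r$ --- with sign conventions adapted to the folding (resp.\ superposition) so that loops carry trivial monodromy while each arc picks up a total phase $e^{i\theta}$ --- expresses $\EE[e^{i\theta \nez}]$ as a ratio of dimer partition functions on $\Ge^r$ with modified edge weights. In the folded model the lift is $\gamma \cup \bar\gamma$ with mirror-symmetric phases (so that reflection-symmetric loops cancel); in the two-domain model the lift is $\gamma$ alone, supplemented by a modification of the removed row of vertical edges that respects the deterministic alternation of arc orientations.

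Second, apply Kasteleyn theory: each modified partition function is a Pfaffian of a modified Kasteleyn matrix $K_\theta$, and the ratio is a Fredholm determinant of a finite-rank perturbation (of rank $O(\eps^{-1})$) supported on the edges along the lifted path. Expanding this determinant as a series in the inverse Kasteleyn matrix $K^{-1}$ and invoking the locally uniform convergence of $\eps^{-1} K^{-1}$ to a Cauchy-type kernel on $U^r$ --- as established by Kenyon~\cite{Ken01} in the Temperleyan case and by Russkikh~\cite{Russ} and Berestycki--Liu~\cite{BerLiu} in the piecewise Temperleyan case --- one passes to a continuum Fredholm determinant along $\gamma \cup \bar\gamma$ (resp.\ $\gamma$). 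The conformal covariance of the Cauchy kernel combines with the transformation of line elements along $\gamma$ to make the limit a conformal invariant of $(U,z)$. Moreover, the two folding conventions yield kernels on $U^r$ that differ only by the reflection across $\RR$, so once the contour is symmetrised they give the same value, and the limits coincide across the two models. Convergence of $\EE[e^{i\theta \nez}]$ for every $\theta \in \RR$ then yields convergence in distribution of $\nez$ to a conformally invariant $n(z)$; specialising to $\theta = \pi$ gives convergence of $\oez = \nez \bmod 2$ to $o(z)$.

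The hard step will be the Fredholm analysis: $K^{-1}$ decays only like $|x-y|^{-1}$, so $T_\theta K^{-1}$ sits at the edge of Hilbert--Schmidt integrability and the trace-class estimates needed to interchange $\det$ with the $\eps\to 0$ limit are delicate. I expect this to require a term-by-term expansion of $\log\det(I + T_\theta K^{-1})$ in the spirit of Kenyon's treatment of the double dimer moment generating function, exploiting cancellations between diagonal contributions. A secondary and more combinatorial difficulty is engineering the sign/phase conventions on the lifted path so that loops contribute no net holonomy in both setups simultaneously; this has to be done differently in each model, since the two models differ precisely in the deterministic versus random nature of the arc orientations.
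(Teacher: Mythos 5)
Your plan founders at the very first step: a scalar (U(1)) phase on the zipper cannot produce $\EE[e^{i\theta\nez}]$. In Kenyon's Pfaffian formalism, a scalar edge weight $e^{i\theta/2}$ on edges crossing $\gamma$ contributes to each arc a factor $e^{\pm i\theta}$ whose sign depends on the arc's orientation. But the arcs surrounding $z$ alternate in orientation deterministically (this is exactly the parity observation the paper makes: counterclockwise minus clockwise arcs is $\oez \in \{0,1\}$). Consequently the product over all arcs surrounding $z$ is $e^{\pm i\theta\oez}$, not $e^{i\theta\nez}$: a configuration with $r$ clockwise and $r$ counterclockwise surrounding arcs contributes $e^{i\theta}\cdot e^{-i\theta} \cdots = 1$, regardless of $r$. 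So the scalar-phase Pfaffian ratio is only a generating function for the \emph{parity} $\oez$, and all information about the full count $\nez$ is lost. There is no choice of sign/phase convention along the lifted path that fixes this, because with a commutative connection the monodromy along an arc is always a power of the phase with exponent equal to the signed crossing number, hence $\pm 1$.

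This is precisely the obstacle the paper circumvents by developing Kenyon's formula for an $\SL_2(\CC)$ connection together with boundary vectors $\psi$. With the unipotent choice $\phi_e = \begin{pmatrix} 1 & \a \\ 0 & 1 \end{pmatrix}$ and $\psi_e = (1,1)^{\T}$, loops contribute $\tr(\phi_C)=2$ identically (so the loop count cancels in the Pfaffian ratio), while arcs surrounding $z$ contribute $\psi^{\T}\phi_A\psi = 2 \pm \a$ depending on orientation. Now alternating orientations give $(2+\a)(2-\a) = 4 - \a^2 \neq 4$, so the Pfaffian ratio becomes $\Ee\bigl[(1-\a^2)^{r_\eps(z)}(1-\oez\a)\bigr]$, which does encode the joint distribution of $(\nez,\oez)$ through its power-series coefficients in $\a$. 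The rest of the paper's argument is then a coefficient-by-coefficient expansion $\tr\bigl((\tS\tK^{-1})^k\bigr) \to c_k(z,\zp)$; because the matrices are finite at each $\eps$, the trace-class and Fredholm regularisation concerns you raise do not actually arise --- the limit is taken term by term, with uniform exponential bounds on $c_k$ coming from boundedness of the Green's function derivatives near $\zp$, and convergence in distribution is then obtained via moments and Carleman's condition rather than via characteristic functions. So while your broad skeleton (zipper, Pfaffian ratio, expand in $K^{-1}$, use Green's function asymptotics, conclude conformal invariance) parallels the paper, the central idea --- how to weight the arcs so that the loop/arc count is actually recoverable --- is missing and cannot be supplied within the scalar-phase framework you propose.
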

We refer to Theorem~\ref{thm:asymp:ne:oe} and Corollary~\ref{cor:joint:moments} for the general statements, and to Example~\ref{ex:strip} for a particular example of the infinite strip $U^r=[-\infty, +\infty]\times[-\pi/2, \pi/2]$. 

Moreover we give integral representations of the moments of $\nez$, and compute its mean explicitly. 
In particular, on the infinite strip we show that, as $\varepsilon \to 0$,
\begin{align*}
 \Ee[\nez] \to \frac{1}{4}-\frac{y^2}{\pi^2} -\frac{2}{\pi^2}\ln(\sin(y)) ,
 \end{align*}
which agrees with the mean of the same random variable defined directly for the ALE (as computed by Avelio Sep\'ulveda in Appendix~\ref{app:ALE}).
We note that our methods could be extended to analyse the distribution of the number of arcs separating two chosen points in the domain.

One of the main conceptual difficulties in approaching the problem of studying the arcs themselves is the issue of separating the loops from the arcs in the counting arguments. 
This is achieved by developing a novel variant of Kenyon's formula 
for double dimers with arcs, and afterwards choosing the right weights (or more precisely SL(2) connections) in this formula. Afterwards an involved analysis of the scaling limit of the formula by means of discrete complex analysis techniques is required.

We note that the works~\cite{Ken14,DubDD,BasChe} study multi-point functions of topological observables in the double dimer model. Basok and Chelkak~\cite{BasChe} showed that these observables contain enough information to identify the law of the loops. 
In our case, the fact that we want to disregard all the loops from the picture restricts greatly the number of useful topological observables, and we can only treat the one-point (or two-point) function. 
However, it will be very interesting to develop the methods of \cite{Ken14,DubDD,BasChe} to study the full collection of both arcs and loops, that (based on the conjecture of Kenyon~\cite{Ken14}) and our conjectures should converge to the ALE with an independent CLE$_4$ in its complement.\\

Below we include a table summarising the main properties of our two dimer models.\\
\begin{center}
\begin{tabular}{ | c | c | c | }
\hline
  & Folded dimer model & Shifted dimer model \\ [0.5ex]
 \hline\hline
 Measure notation & $\mu^1_\eps$ & $\mu^2_\eps$ \\  
 \hline
Height function b.c. in discrete & Varies & Alternates between 0 and 1  \\
 \hline
Limit GFF b.c. on bdry arc & Neumann & Dirichlet  \\
 \hline
Orientation of arcs in discrete & i.i.d. Bernoulli & Deterministic  \\
\hline
Weight on arcs in discrete & $2^{\#  arcs}$ & $1^{\# arcs}$  \\
\hline
Conjectured limit of arcs & ALE & ALE  \\
\hline
\end{tabular}
\end{center}

\subsection*{Organisation of the article}

\begin{itemize}
\item In Sect.~\ref{sec:KForm} we give a new combinatorial interpretation and also generalise Kenyon's formula for double dimers from~\cite{Ken14} to the setting 
that includes arcs. 
We note that such formulas can also be obtained from the independent work of Douglas, Kenyon and Shi~\cite{webs} who studied $n$-multiwebs on planar graphs.
\item In Sect.~\ref{section:l&a} we define the two versions of the model described above, and state the main result (Theorem~\ref{thm:asymp:ne:oe}) for both of them.
\item In Sect.~\ref{sec:proof-main-thm} we prove the main theorem in the folded dimers model.
\item In Sect.~\ref{sec:shifted} we prove the main theorem in the shifted dimers model (we elaborate on the differences with the proof in Sect.~\ref{sec:shifted}).
\item In Sect.~\ref{sec:rectangle}, we consider a case of a symmetric domain with two pieces of the boundary from which the arcs emanate, and we compute explicitly the distribution of the 
arcs that connect the two boundary pieces together. 
\item In App.~\ref{app:ALE} (due to Avelio Sep\'ulveda) a continuum computation of the expected number of arcs in ALE surrounding a point is presented.
\item In App.~\ref{appendix:A}, we recall the asymptotic estimates of the inverse Kasteleyn matrix for the dimer model in Temperleyan domains obtained by Kenyon in \cite{Ken00}. We detail some arguments concerning the behaviour near the boundary.
\item In App.~\ref{app:B}, we recall the extension of the results of App.~\ref{appendix:A} to piecewise Temperleyan domains obtained by Russkikh in \cite{Russ}. We also explain how to write the limit in terms of the conformal invariant quantities that are useful to us in folded domains.
\end{itemize}

\subsection*{Acknowledgements} The authors extend their warm thanks to Avelio Sepúlveda for his contribution of Appendix \ref{app:ALE}. ML and LR thank Misha Basok for useful discussions during the program \emph{Geometry, Statistical Mechanics, and Integrability} at IPAM, UCLA. ML and KR were supported by the FWF Standalone grant Spins, Loops and Fields P 36298 and the FWF SFB Discrete Random Structures 10.55776/F1002.
KR was supported by the Academy of Finland Centre of Excellence Programme grant number 346315 “Finnish centre of excellence in Randomness and STructures” (FiRST). LR was
partially supported by the DIMERS project ANR-18-CE40-0033 funded by the French National Research Agency.

\section{Kenyon's formula} \label{sec:KForm}
\subsection{Setup and formula}

We begin with some definitions. Let $G=(V,E)$ be a finite, connected, bipartite, planar graph. Its vertices $V$ are partitioned into white and black vertices. Let $\partial$ be a \textit{strict} subset of the vertices incident to the external face of $G$, which contains the same number of black and white vertices. We form a graph $G^\times$ which can be described as taking two copies of $G$, identifying the vertices in the two copies on $\partial$ (and not the rest of the graph), and then adding ``diagonal'' edges which join a vertex $u$ in one copy of $G$ to a vertex $v$ in the other copy, whenever $u$ and $v$ form an edge in $G$. One can think of the two copies of $G$ being ``folded'' together along $\partial$. See Figure \ref{fig-G-and-Gx} for a small example.

\begin{figure}[h]
    \resizebox{0.6\textwidth}{!}{%
        \includegraphics[]{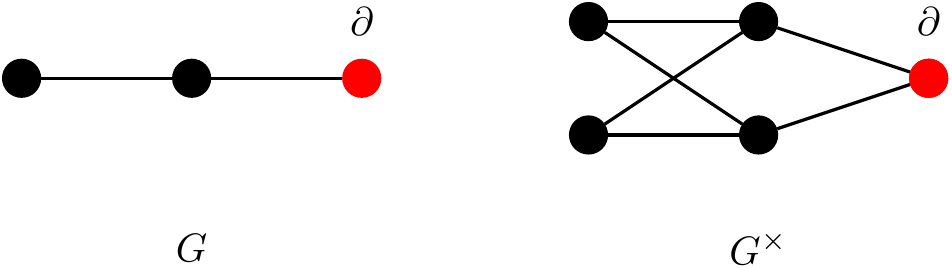}    
    }
    \caption{A very simple example of the graph $G^\times$ (right), given a graph $G$ (left). The set $\partial$ is in this case the single red vertex.}
    \label{fig-G-and-Gx}
\end{figure}

Formally, we write $G^\times=(V^\times,E^\times)$ for the graph with vertex set $V^\times:=\partial\cup((V\setminus\partial) \times \{1,2\})$, and edge set $E^\times$ given by edges:
\begin{itemize}
    \item $\{(u,i),(v,j)\}$ for all $i,j=1,2$ and $u,v\in V\setminus\partial$ such that $\{u,v\}\in E$;
    \item $\{(u,i),w\}$ for all $i=1,2$ and $u\in V\setminus\partial$, $w\in\partial$ such that $\{u,w\}\in E$;
    \item $\{w,w'\}$ for all $w,w'\in\partial$ and $\{w,w'\}\in E$.
\end{itemize}
This graph is also bipartite, and we define the colour (white or black) of $(u,i)$ as the colour of $u$. We denote by $p$ the \emph{projection} from $G^\times$ onto $G$: for all $u \in V^{\times} \cap \partial$, $p(u) = u$, for all $(u,i) \in V^{\times}$, $p(u,i) = u \in V$ and for all $\{x,y\} \in E^{\times}$, $p(\{x,y\}) = \{p(x),p(y)\} \in E$.\par 
A dimer configuration on $G^\times$ is a set of edges $m\subset E^\times$ such that every vertex in $V^\times$ is incident to exactly one edge in $m$. The projection of a dimer configuration $m$ is the set of edges $p(m) \subset E$ obtained by projecting all edges of $m$. In the case $\partial=\varnothing$, the projection of a dimer configuration on $G^\times$ is a configuration of disjoint closed loops and double edges on $G$: it is a double dimer configuration. In the case $\partial\neq\varnothing$, the projection of a dimer configuration on $G^\times$ is a configuration of disjoint closed loops, double edges, and also paths from $\partial$ to $\partial$, which we call \textit{arcs} (the arcs can be of length 1). Let $\Om$ denote the set of such configurations of loops, double edges, and arcs on $G$.\\

Let $\Phi$ be some $\SL_2(\CC)$ connection in the bulk of $G$. That is, $\Phi$ assigns each directed edge $uv$ ($u,v\in V\setminus\partial$) a matrix $\phi_{u,v}\in\SL_2(\CC)$, such that $\phi_{v,u}=\phi_{u,v}^{-1}$. For a loop $C=\{e_1,\dots,e_{2n}\}$ in $G$, define the monodromy of $\Phi$ around $C$, $\phi_C$ to be the product $\phi_{e_1}\circ\cdots\circ\phi_{e_{2n}}$.
Note this depends on the choice of $e_1$ and the orientation or the loop; $\tr(\phi_C)$, however, does not. For an edge $\{u,w\}$ with $w\in\partial$ and $u\in V\setminus\partial$, let $\psi_{\{u,w\}}\in\CC^2$. Fix some basis of each copy of $\CC^2$ so that each $\phi_{v,u}$ can be written as a matrix, and each $\psi_{\{u,w\}}$ as a column vector. 

We say a set of Kasteleyn phases on the edges of a bipartite, planar graph $G$ is a function $\xi:E\to\CC$ with $|\xi_e|=1$ for all $e\in E$, such that for every simple loop of edges $e_1\dots e_{2n}$ in $G$, one has
\be\label{kast:eq:kast-weighting}
    \frac{\xi_{e_1}\xi_{e_3}\cdots\xi_{e_{2n-1}}}{\xi_{e_2}\xi_{e_4}\cdots\xi_{e_{2n}}}=(-1)^{n+1}.
\ee
It is well known that such phases exist (see for example Theorem 4.1 of \cite{KastOrient}) and in fact can be taken to be real i.e.~signs $\pm1$ on the edges. One can show that \eqref{kast:eq:kast-weighting} holds for all loops surrounding an even number of vertices. Moreover if one replaces $\xi_{x,y}$ by $\eps(x)\eps(y)\xi_{x,y}$ for some function $\eps:V\to \{\pm1\}$, the new phases are also a set of Kasteleyn phases: this is called a gauge transform. 
Let $\bar\partial$ be a connected, strict subset of the boundary vertices, with $\partial\subset\bar\partial$.  By using a guage transform, one can assume that 
the phases on the boundary edges (incident to the external face) alternate between $+1$ and $-1$ in $\bar\partial$. 
We fix such real phases $\xi$ for the rest of this section.

We assign weights $\nu(e)$ to the (undirected) edges of $G^\times$ as follows. If $\{(u,i),(v,j)\}$ is some edge in the bulk of $G^\times$, with $u$ white and $v$ black, let $\nu_{\{(u,i),(v,j)\}}:=(\phi_{u,v})_{i,j}$, (the $i,j$ entry of $\phi_{u,v}$.
If $w\in\partial$ and $(u,i)$ is adjacent to $w$ but not in $\partial$, we set $\nu_{\{(u,i),w\}}=(\psi_{\{u,w\}})_i$. If $w,w'\in\partial$ then let $\nu_{\{w,w'\}}=1$. Let $K$ be an antisymmetric matrix whose rows and columns are indexed by vertices of $G^\times$, defined as follows. For $x, y \in V^{\times}$ with $x$ white and $y$ black, let 
\bestar
    K_{x,y}=\xi_{p(x),p(y)}\nu_{x,y},
\eestar
and let $K_{y,x}=-K_{x,y}$, where $\xi$ are the Kasteleyn phases.\\

We assign an ordering to $V^\times$ which will be used in the Pfaffian of Proposition \ref{kast:prop:kast}. Let the ordering be: first the boundary vertices, ordered as they appear clockwise in $\ol{\partial}$, 
then the white vertices in some order such that each $(u,1)$ is directly followed by $(u,2)$, and then the black vertices, again in some order such that $(v,1)$ is directly followed by $(v,2)$. We can write our assumption that the Kasteleyn phases alternate on the boundary as $\xi_{w,b} = (-1)^{\mathds{1}\{w>b\}}$ for all edges linking two vertices of $\ol{\partial}$. 

A set of Kasteleyn phases satisfying this hypothesis is depicted in Figure \ref{kast-fig-real_weights}, where $\partial=\bar\partial$ is the set of edges on the $x$-axis. 

\begin{proposition}\label{kast:prop:kast}[Kenyon's formula]
    We have that
    \be\label{kast:eq:propkast}
        \pf K = \sum_{\om\in\Om} 
        \prod_{\mathrm{loops} \ C}\tr(\phi_C)
        \prod_{\mathrm{arcs} \ A}\psi_{w_A}^\T\phi_A\psi_{b_A},
    \ee
    where $\phi_C$ is the monodromy of the connection $\Phi$ around the loop $C$, $b_A$ (resp. $w_A$) is the black (resp. white) endpoint of the arc $A$ (note that the two endpoints of each arc have different colours), and if $e_1,\dots,e_k$ are the bulk edges of $A$ ordered towards $b_A$, $\phi_A=\phi_{e_1}\cdots\phi_{e_k}$. When an arc is a single edge $e$, we interpret the factor corresponding to it as $\nu_e=1$. 
\end{proposition}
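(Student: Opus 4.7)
The plan is to start from the combinatorial definition of the Pfaffian as a signed sum over perfect matchings of $V^{\times}$, and to reduce it to the claimed sum over $\Om$ in two steps: first, organising the matchings $M$ on $G^\times$ according to their projection $\omega = p(M) \in \Om$; second, evaluating the sum over all lifts of each fixed $\omega$. Only matchings using edges with nonzero entries in $K$ contribute, so the sum restricts to perfect matchings of $G^\times$, and each such matching indeed projects to a configuration of loops, double edges and arcs on $G$.

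For a fixed $\omega \in \Om$, the lifts decompose across the connected components of $\omega$. A double edge admits a unique lift, with trivial weight. For a loop $C$ with bulk edges $e_1, \dots, e_{2n}$, parity of coverage forces the lift once a sheet is chosen at any single vertex; the two resulting lifts carry weights obtained by multiplying the entries $(\phi_{e_i})_{i_r, i_{r+1}}$ of the connection matrices along the cycle, and summing these over the internal indices produces precisely $\tr(\phi_C)$, since the indices contract into a matrix product closing up to a trace. For an arc $A$ with endpoints $w_A, b_A \in \partial$, the same local analysis applies, except that the two boundary edges at the ends of $A$ contribute entries of $\psi_{\{w_A, \cdot\}}$ and $\psi_{\{\cdot, b_A\}}$ rather than closing a cycle; summing over the internal sheet indices yields the bilinear form $\psi_{w_A}^\T \phi_A \psi_{b_A}$. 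The degenerate case of a single-edge arc lying entirely in $\partial$ is covered by the convention $\nu_e = 1$ and produces the expected factor $1$.

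The main technical obstacle is the sign verification. The Pfaffian expansion assigns each matching a sign depending on the chosen ordering of $V^{\times}$, and the Kasteleyn phases $\xi_e$ further decorate each edge. We must show that, for every lift of every $\omega \in \Om$, the combined sign is $+1$, so that the summation over lifts really produces the positive-weight expression on the right-hand side. The standard strategy, following Kasteleyn and its loop-only refinement by Kenyon in \cite{Ken14}, is to fix one reference lift and to compare all others to it by cycle swaps: two matchings of $G^\times$ with the same projection $\omega$ differ along a disjoint union of cycles in $G^\times$, and the Kasteleyn condition \eqref{kast:eq:kast-weighting} applied to these cycles (together with planarity of $G^\times$) yields exactly the permutation sign coming from the Pfaffian. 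The new ingredient compared to \cite{Ken14} is the presence of arc endpoints: the clockwise ordering of the boundary vertices in $\bar\partial$ fixed before taking the Pfaffian, together with the assumption that the boundary phases alternate as $\xi_{w,b} = (-1)^{\mathbbm{1}\{w>b\}}$, ensures that when two lifts differ at the boundary endpoints of arcs, the permutation sign produced by the Pfaffian is exactly compensated by the Kasteleyn phases, so that the arc contributions inherit the correct overall sign. An alternative route is indicated via the $n$-multiweb formalism of \cite{DKS}, which repackages the algebra but still rests on the same sign analysis.
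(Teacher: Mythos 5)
Your reduction of $\pf K$ to a sum over $\omega\in\Om$ of lift-sums, and your identification of the lift-sums over loops and arcs with $\tr(\phi_C)$ and $\psi^\T\phi_A\psi$ respectively, is the right skeleton and matches the paper. However, there are two substantive gaps in the hard part of the argument, which is the sign analysis.

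First, your proposed sign mechanism rests on ``planarity of $G^\times$'' applied to symmetric-difference cycles of two lifts, but $G^\times$ is \emph{not} planar. Already for $G$ a $4$-cycle with $\partial=\varnothing$, $G^\times$ is bipartite with $8$ vertices and $16$ edges, exceeding the planar bipartite bound $m\le 2n-4=12$. So the Kasteleyn planarity argument does not transfer to $G^\times$, and the paper does not attempt this route. Instead it works with the disc/crossing-count formulation of the Pfaffian sign (Equation~\eqref{kast:eq:pfaffian}) and proves an explicit formula (Lemma~\ref{kast:lem:sign-formula}) for $(-1)^{k(m)}$ directly in terms of the sheet indices of each lifted loop and arc, by a base-case-plus-single-swap induction. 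That computation is local and never invokes any planarity of $G^\times$; the only planarity used is that of $G$ itself, to argue that crossings between curves of different (projected) arcs come in pairs.

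Second, your claim that summing over internal indices ``contracts into a matrix product closing up to a trace'' glosses over a genuine obstruction: along a loop the edges of $G^\times$ alternate in orientation relative to the monodromy direction, so the naive index contraction does not form a matrix product. The paper resolves this with the $\SL_2$ identity $\phi_{3-i,3-j}=(-1)^{i+j}(\phi^{-1})_{j,i}$, which both reverses the wrong-way factors and introduces sign corrections $(-1)^{i+j}$; these signs are exactly what cancels against the $(-1)^{i_1+j_1+\cdots}$ produced by Lemma~\ref{kast:lem:sign-formula}, and a residual $(-1)^{|C|/2}$ (resp.\ $(-1)^{(|A|-1)/2}$) remains from the Kasteleyn phases and only cancels globally using the parity of $|V\setminus\partial|$. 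None of this delicate bookkeeping appears in your proposal, and without it the assertion that the loop and arc contributions come out with sign $+1$ is unsupported. In short: correct high-level plan, but the sign verification—where the proposition actually lives—needs to be redone, and your proposed mechanism for it is flawed.
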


\begin{figure}[h]
    \resizebox{0.9\textwidth}{!}{%
        \includegraphics[]{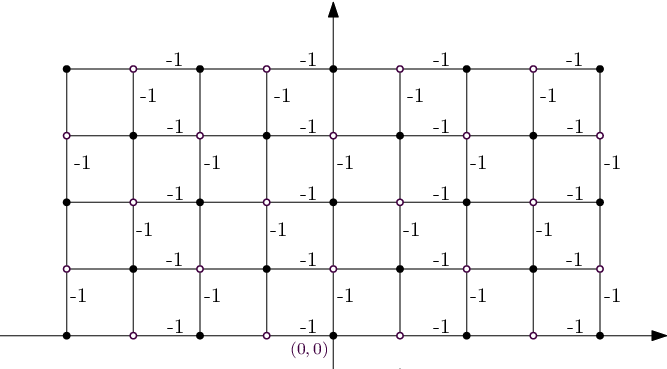}    
    }
    \caption{An example of a graph $G$, where we take $\partial=\ol{\partial}$ to be the edges on the $x$-axis, and real Kasteleyn weights $\xi$ which alternate in sign along $\ol{\partial}$ as $\xi_{w,b} = (-1)^{\mathds{1}\{w>b\}}$, where the ordering on $\partial$ increases clockwise. (We only write the $-1$ phases in the diagram for clarity; the rest are 1).}
    \label{kast-fig-real_weights}
\end{figure}

\begin{remark} We note an independent work of Douglas, Kenyon and Shi \cite[Theorem 4.1]{webs} considers similar interpretations of double dimers as in our proof of this result.
\end{remark}
 
\begin{remark}\label{kast:rmk:prop}
    A version of the proposition above also holds for other Kasteleyn phases $\zeta$ (in particular complex ones) if they can be obtained from the real weighting $\xi$ above by a gauge transformation $\zeta_{u,v}=\eps(u)\eps(v)\xi_{u,v}$, for some $\eps:V \to \CC$. This corresponds, for each $u\in V$, to multiplying the row and column of $K$ corresponding to $(u,1)$ and $(u,2)$ (or just $u$ if $u \in \partial$) by $\eps(u)$, which multiplies the Pfaffian by $\eps(u)^2$ (or $\eps(u)$ if $u \in \partial$). One then obtains the right hand side of the proposition, multiplied by the scalar $\prod_{u\in V \setminus \partial}\eps^2(u)\prod_{u \in \partial}\eps(u)$.
\end{remark}

Recall that the usual complex Kasteleyn phases adapted to discrete holomorphy for $\ZZ^2$ are given by 
\be\label{kast-eq-conplex-kast-weights}
    \zeta_{b,w}=b-w
\ee
(thinking of $b,w\in\CC$), that is, at each white vertex, the weights of the four adjacent edges starting at the right-going edge and proceeding anticlockwise are $1,i,-1,-i$ respectively. Using the gauge function $\eps(u)=(-i)^{\mathds{1}\{u \text{ has odd vertical coordinate}\}}(-1)^{\mathds{1}\{u  \ \text{is white}\}}$, and the real Kasteleyn phases $\xi$ depicted in Figure \ref{kast-fig-real_weights}, $\zeta_{u,v}:=\eps(u)\eps(v)\xi_{u,v}$ are the Kasteleyn phases of discrete holomorphy of Equation \eqref{kast-eq-conplex-kast-weights}. Remark \ref{kast:rmk:prop} gives us the corollary of Proposition \ref{kast:prop:kast} which we will use in the following sections.

\begin{corollary}\label{kast:corr}
    Let $G$ have the conditions from the above, and be a simply connected subgraph of the upper half of $\ZZ^2$. Then \eqref{kast:eq:propkast} holds with $K$ defined using the Kasteleyn phases $\zeta$ (up to a global gauge factor of modulus $1$). 
\end{corollary}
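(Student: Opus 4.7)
The corollary is essentially an application of Remark~\ref{kast:rmk:prop} with the specific gauge function
\[
\eps(u) = (-i)^{\mathbbm{1}\{u \text{ has odd vertical coordinate}\}}(-1)^{\mathbbm{1}\{u \text{ is white}\}}
\]
announced immediately before the statement, so the whole proof reduces to (i) verifying that $\eps$ really does relate the two phase systems and (ii) tracking the modulus of the overall scalar that Remark~\ref{kast:rmk:prop} produces. No new combinatorial identity is needed.

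First I would carry out the local check that $\zeta_{u,v} = \eps(u)\eps(v)\xi_{u,v}$ on every edge $\{u,v\}$ of $G$. Fix a white vertex $w\in\ZZ^2$ and consider its four incident edges. The discrete-holomorphy convention $\zeta_{b,w}=b-w$ assigns the values $1,\,i,\,-1,\,-i$ to the right, up, left and down edges at $w$, respectively. Writing $b = w + \eta$ with $\eta\in\{\pm1,\pm i\}$, one sees that the vertical factor $(-i)^{\mathbbm{1}\{\cdot \text{ odd vertical}\}}$ flips the vertical-coordinate parity across a vertical edge (contributing the factor of $\pm i$), while leaving horizontal edges untouched; the color factor $(-1)^{\mathbbm{1}\{\cdot \text{ white}\}}$ contributes a fixed sign to each edge because every edge links one white and one black vertex. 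Comparing with $\zeta$ edge-by-edge determines the signs $\xi_{u,v}\in\{\pm1\}$ that the real Kasteleyn phases must take, and one checks that these signs are precisely the weighting of Figure~\ref{kast-fig-real_weights}, including the alternating convention $\xi_{w,b}=(-1)^{\mathbbm{1}\{w>b\}}$ along $\bar\partial$. (The fact that $\xi$ and $\zeta$ are both Kasteleyn phases on a planar bipartite graph and differ only by multiplicative factors at vertices forces such a gauge function to exist; the above computation simply exhibits it explicitly on the square lattice.)

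Once this local identity is verified, Remark~\ref{kast:rmk:prop} applies word-for-word. Replacing $\xi$ by $\zeta$ in the definition of $K$ amounts to multiplying, for each $u\in V\setminus\partial$, the pair of rows and the pair of columns corresponding to $(u,1)$ and $(u,2)$ by $\eps(u)$, and multiplying the row and column of each boundary vertex $w\in\partial$ by $\eps(w)$. The effect on the Pfaffian is to multiply it by
\[
c \;:=\; \prod_{u\in V\setminus\partial}\eps(u)^{2}\,\prod_{w\in\partial}\eps(w).
\]
Applying Proposition~\ref{kast:prop:kast} to the right-hand side and dividing by $c$ yields equation~\eqref{kast:eq:propkast} for the complex Kasteleyn matrix $K$, up to the overall factor $c$.

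Finally, since $\eps(u)\in\{\pm 1,\pm i\}$ for every vertex $u$, we have $|\eps(u)|=1$ and hence $|c|=1$. This is the claimed global gauge factor of modulus one. The only step that requires care is the bookkeeping in the local identity $\zeta = \eps\cdot\eps\cdot\xi$, and in particular making sure it is consistent with the alternating-sign convention chosen for $\xi$ on $\bar\partial$; everything else is a direct invocation of Remark~\ref{kast:rmk:prop}.
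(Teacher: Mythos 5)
Your proof is correct and takes essentially the same route as the paper: the statement is an immediate consequence of Remark~\ref{kast:rmk:prop} applied with the gauge function $\eps(u)=(-i)^{\mathbbm{1}\{u \text{ has odd vertical coordinate}\}}(-1)^{\mathbbm{1}\{u \text{ is white}\}}$ (which the text preceding the corollary already identifies as the one sending $\xi$ to $\zeta$). Your observation that $\eps$ takes values in $\{\pm1,\pm i\}$, so the resulting global scalar $\prod_{u\in V\setminus\partial}\eps(u)^2\prod_{w\in\partial}\eps(w)$ has modulus one, is exactly the point the paper leaves implicit.
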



\subsection{Proof of Kenyon's formula}

We use the following formula for a Pfaffian. Place the vertices of $G^\times$ clockwise around the edge of a disc $D$, in the ordering given above. For a pairing $\pi$ of the vertices, draw curves in $D$ connecting each of the pairs, producing a set of intersecting curves $D_\pi$. Define $k(\pi)$ to be the number of crossings of these curves. The value of $k(\pi)$ depends on how one draws the curves, but $(-1)^{k(\pi)}$ does not. Then 
\be\label{kast:eq:pfaffian}
    \pf K := \sum_{\pi}(-1)^{k(\pi)}\prod_{\substack{xy\in\pi\\x<y}}K_{x,y},
\ee
where the sum is over all pairings of indices (in our case, the indices are vertices of $G^\times$). In particular, if the pairing $\pi$ is given by a dimer configuration $m$ on $G^\times$, $(-1)^{k(m)}$ is well defined. 

Recall that $p$ projects from $G^\times$ to $G$. By extension, $p$ projects a dimer configuration on $G^\times$ to a configuration $p(m):=\om$ on $G$ of disjoint loops (which can be of length 2) and arcs from $\partial$ to $\partial$ (which can be of length 1). We write $\Om$ for the set of all possible such configurations $\om$. A dimer configuration $m$ is then specified by $\om=p(m)$, along with a certain list of indices coming from the two copies for each loop, double edge and arc in $\om$. The first part of the proof, Lemma \ref{kast:lem:sign-formula}, writes $(-1)^{k(m)}$ in terms of these loops, arcs, and indices.\\

Let us make this precise. A configuration $\om$ is a set of disjoint loops $C$ and arcs $A$. Take a loop of length 2 (a doubled edge) in $\om$, located on some edge $e = \{u,v\}$ with $u$ white and $v$ black. Take an index $i_e \in \{1,2\}$. One then has (part of) the corresponding dimer configuration, given by the edges $\{(u,1),(v,1)\}$ and $\{(u,2),(v,2)\}$ if $i_e=2$ and $\{(u,1),(v,2)\}$ and $\{(u,2),(v,1)\}$ if $i_e=1$.

For a loop $C$ in $\om$ of length larger than 2, label its vertices $u_1, v_1, \dots, u_k, v_k$ in counterclockwise order round the loop, where $v_i$ are all black, $u_i$ all white (See Figure \ref{kast-fig-cycles-1}). Let $i_1, j_1, \dots, i_k, j_k\in\{1,2\}$. The dimer configuration corresponding to this list of indices is: for each pair $u_l,v_l$, let the edge $\{(u_l,i_l),(v_l,j_l)\}$ be in the configuration, and for each pair $v_l,u_{l+1}$, let the edge        
 $\{((v_l,3-j_l),(u_{l+1},3-i_{l+1})\}$ be in the configuration. See Figure \ref{kast-fig-cycles-2}. In the figure, the vertices on the upper row are $(u_1,2)$, $(v_1,2)$, $(u_2,2)$, etc. The vertices below those are the same but with second coordinate equal to 1. The two dashed edges are the same edge.

\begin{figure}[h]
    \resizebox{0.6\textwidth}{!}{%
        \includegraphics[]{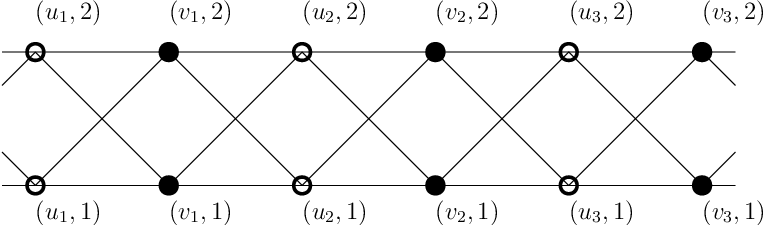}    
    }
    \caption{The vertices of $G^\times$ in a loop, with all the edges of $G^\times$ joining them.}
    \label{kast-fig-cycles-1}
\end{figure}

\begin{figure}[h]
    \resizebox{0.6\textwidth}{!}{%
        \includegraphics[]{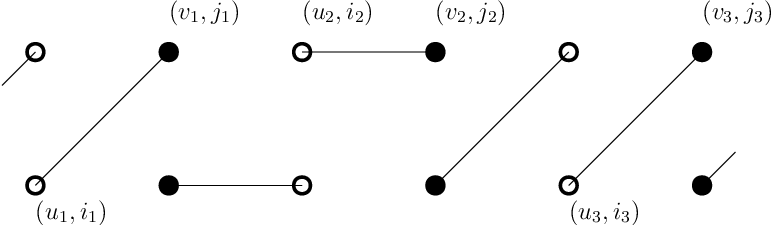}    
    }
    \caption{The vertices and edges of $G^\times$ in a loop.}
    \label{kast-fig-cycles-2}
\end{figure}

For an arc $A$ in $\om$, label its vertices along the arc $b_A, u_1, v_1, \dots, u_k, v_k, w_A$, where $u_i$ and $w_A$ are all white, $v_i$ and $b_A$ all black (see Figure \ref{kast-fig-arcs-1}. Then let $r_1,s_1,\dots,r_k,s_k\in\{1,2\}$. The dimer configuration corresponding to this list of indices is defined as in a loop, plus the two edges $\{b_A,(u_1,3-r_1)\}$ and $\{(v_k,3-s_k),w_A\}$. See Figure \ref{kast-fig-arcs-2}. An arc of length 1 needs no indices, as there is just one (boundary) edge in $G^\times$ that can produce it.

\begin{figure}[h]
    \resizebox{0.75\textwidth}{!}{%
        \includegraphics[]{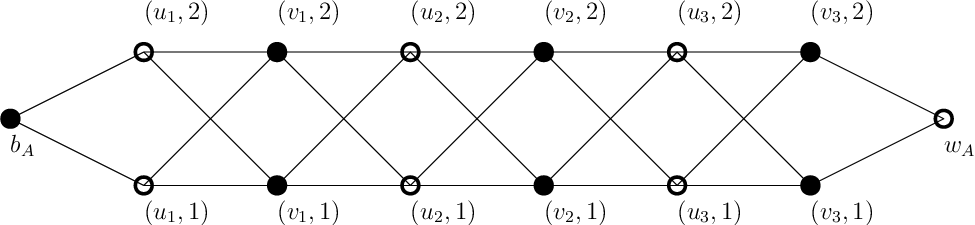}    
    }
    \caption{The vertices in $G^\times$ in an arc, with all the edges of $G^\times$ joining them.}
    \label{kast-fig-arcs-1}
\end{figure}

\begin{figure}[h]
    \resizebox{0.75\textwidth}{!}{%
        \includegraphics[]{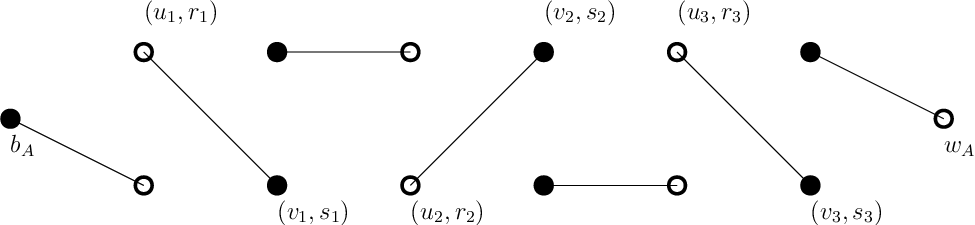}    
    }
    \caption{The vertices and edges in $G^\times$ in an arc.}
    \label{kast-fig-arcs-2}
\end{figure}

Using this notation for a dimer configuration, we have the following lemma.

\begin{lemma}\label{kast:lem:sign-formula}
    Let $m$ be a dimer configuration on $G^\times$, written as $\om=p(m)$ and a list of indices as above for each loop, doubled edge and non-trivial arc. Then
    \bestar
        \begin{aligned}
        (-1)^{k(m)}
        &=
        \prod_{\mathrm{doubled~edges} \ e}(-1)^{i_e} \prod_{\mathrm{cycles} \ C}(-1)^{1+i_1+j_1+\cdots+i_k+j_k}\\
        &\quad \times \prod_{\mathrm{non~trivial~arcs} \ A}(-1)^{r_1+s_1+\cdots+r_k+s_k}(-1)^{\mathds{1}\{b_A>w_A\}},
        \end{aligned}
    \eestar
    where the inequality in the final exponent is the order on the vertices of $G^\times$.
\end{lemma}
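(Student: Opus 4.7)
The plan is to read $(-1)^{k(m)}$ as the sign of the permutation on $V^\times$ obtained by reading off the pairs of $m$ in the prescribed ordering, via~\eqref{kast:eq:pfaffian}, and to compute it by a factorization argument over the components of $\omega = p(m)$. The key flexibility is that conjugating the antisymmetric matrix $K$ by a permutation matrix leaves $\pf K$ invariant, so I can freely reorder the bulk white vertices among themselves, and the bulk black vertices among themselves. Using this, I would choose an ordering in which the bulk vertices of each component (doubled edge, loop, or nontrivial arc) occupy a contiguous block on the white side and a contiguous block on the black side, while the boundary vertices remain in their fixed clockwise order on $\bar\partial$. In this adapted ordering, the chords of two distinct bulk-only components can be separated by a chord of the disk, so their mutual crossings vanish and the sign $(-1)^{k(m)}$ factorizes into local contributions, one per component.

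Next I would compute these local contributions separately. For a doubled edge, the four points $(u,1),(u,2),(v,1),(v,2)$ support two chords that nest when $i_e=1$ and cross when $i_e=2$, giving the factor $(-1)^{i_e}$. For a loop of length $2k$, I would write down the induced permutation between $2k$ contiguous whites and $2k$ contiguous blacks and evaluate its sign: a reference choice such as all $i_l=j_l=1$ produces a permutation whose nontrivial part is a single $k$-cycle on the copy-$2$ positions, whose sign is computed directly; each flip of a single $i_l$ or $j_l$ swaps two adjacent target positions and toggles the sign, so compiling the contributions yields the factor $(-1)^{1+i_1+j_1+\cdots+i_k+j_k}$. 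For a nontrivial arc the same bulk analysis gives $(-1)^{r_1+s_1+\cdots+r_k+s_k}$, and the two boundary endpoints contribute the additional factor $(-1)^{\mathbbm{1}\{b_A>w_A\}}$ coming from their relative position in the cyclic order of $\bar\partial$.

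The main obstacle is that, unlike bulk-only components, two arcs whose boundary endpoints are cyclically interlaced on $\bar\partial$ cannot be separated from each other by a single chord of the disk, since the cyclic order of the boundary is fixed and forces some inter-arc crossings near the boundary. The crucial point to verify is that the total parity of these unavoidable boundary-side crossings equals $\sum_A \mathbbm{1}\{b_A>w_A\} \bmod 2$, so that the per-arc indicators in the formula already absorb them without any cross-term correction. This reduces to a careful disk-topology bookkeeping on the cyclic sequence of arc endpoints, which is the main technical step; once it is established, multiplying the component factors recovers the identity in the lemma.
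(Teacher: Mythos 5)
Your outline follows the same route as the paper: factor $(-1)^{k(m)}$ over the components of $\omega = p(m)$ by rearranging bulk vertices so that each component's fiber in $G^\times$ is consecutive, then compute per-component contributions via a reference configuration plus an inductive index-flip. The doubled-edge and loop computations you sketch match the paper's, as does the per-arc bulk computation. Two things need repair.

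First, conjugating an antisymmetric matrix by a permutation matrix does not leave the Pfaffian invariant --- $\pf(PKP^{\mathsf T})=\det(P)\,\pf(K)$ --- so you cannot "freely reorder the bulk white vertices among themselves." The legitimate move, which the paper uses, is to slide each pair $(u,1),(u,2)$ as a unit; permuting such two-element blocks is always an even permutation, and this already suffices to make each component's fiber contiguous while preserving $(-1)^{k(m)}$.

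Second, and more seriously, you flag the inter-arc crossing count as the remaining technical step but state it in a way that cannot be right. If your per-arc analysis has already produced the factor $(-1)^{r_1+\cdots+s_k+\mathbbm{1}\{b_A>w_A\}}$ from the intra-arc crossings (including the two chords incident to $b_A$ and $w_A$), then the residual crossings between curves of \emph{distinct} arcs must have total parity $0$, not $\sum_A\mathbbm{1}\{b_A>w_A\}\bmod 2$ --- otherwise the boundary indicator would be double-counted. The idea that closes this, which your "careful disk-topology bookkeeping" alludes to but does not supply, is that the arcs are pairwise non-crossing planar paths in $G$, and the chords of each arc in $D_m$ (with the bulk block collapsed to a short segment) form an ambient deformation of the actual arc drawn in $G$. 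Each elementary deformation changes the crossing number with another curve by an even amount, so the inter-arc crossing parity in $D_m$ equals that in $G$, which is $0$. Without this topological input the arc factorization does not obviously terminate; with it, your argument matches the paper's proof.
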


\begin{proof}
    Let $m$ be a dimer configuration on $G^\times$, and let $D_m$ be the intersecting curves given by $m$ as described above. The vertices of $G^\times$ are arranged clockwise on the boundary of a disc $D$ according to the order $<$, and each pair of vertices in an edge of $m$ are joined by a curve in $D_m$. Recall $k(m)$ is the number of crossings of these curves. One can permute the pairs of bulk vertices $(v,1),(v,2)$ with each other in $D_m$ and preserve $(-1)^{k(m)}$. For example, if the vertices $(v,1),(v,2),(u,1),(u,2)$ appear consecutively on the boundary of $D_m$, permuting them so they appear in the order $(u,1),(u,2),(v,1),(v,2)$ preserves $(-1)^{k(m)}$. We will use this property several times in the rest of the proof. The curves connecting these vertices now only cross each other and no other curve in $D_m$, as the vertices are consecutive.\\

    Let $e=(u,v)$ a doubled edge in $\omega = p(m)$ with $u$ white and $b$ black. Using the permuting operation described above, one can permute the vertices of $D_m$ so that the vertices of $p^{-1}(e)$ appear consecutively, in the order $(u,1),(u,2),(v,1),(v,2)$. Now the two curves in $D_m$ corresponding to the two edges in $m$ projecting on $e$ do not cross the other curves, and they cross each other once if and only if $i_e=1$, hence the contribution to $(-1)^{k_m}$ is $(-1)^{i_e}$.\\
    
    Let $C=(u_1,v_1,\dots,u_k,v_k)$ be a loop in $\om=p(m)$. Using the permuting operation described above, one can permute the vertices of $D_m$ so that the vertices of $p^{-1}(C)$ appear consecutively, in the order: $(u_1,1),(u_1,2)$, $(u_2,1),(u_2,2)$, \dots, $(u_k,1),(u_k,2)$, $(v_k,1),(v_k,2)$, $(v_{k-1},1),(v_{k-1},2)$, \dots, $(v_1,1),(v_1,2)$. See Figure \ref{kast-fig-no-diags-2}, in which the ordering starts at $(u_1,1)$ and proceeds clockwise. The curves connecting these vertices now only cross each other in $D_m$, and not other curves, as the vertices are consecutive. Write $k_{m,C}$ for the number of these crossings. We claim that

    \be\label{kast:eq:crossings-one-cycle}
        (-1)^{k_{m,C}}=(-1)^{1+i_1+j_1+\cdots+i_k+j_k}.
    \ee

    Let $m_0$ be a dimer configuration such that on the loop $C$, it takes the values $i_l=j_l=2$ for all $l=1,\dots,k$. This configuration has no diagonal edges in $C$; see Figure \ref{kast-fig-no-diags-1}. 

    \begin{figure}[h]
        \resizebox{0.75\textwidth}{!}{%
        \includegraphics[]{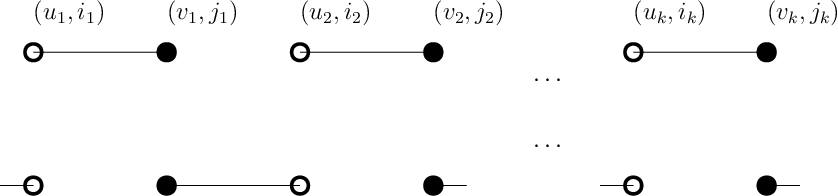}    
        }
        \caption{The dimer configuration $m_0$.}
        \label{kast-fig-no-diags-1}
    \end{figure}

    One sees that $(-1)^{1+i_1+j_1+\cdots+i_k+j_k} = -1$ in this case. Meanwhile, the portion of $D_{m_0}$ corresponding to $C$ can be drawn as:

    \begin{figure}[h]
        \resizebox{0.75\textwidth}{!}{%
        \includegraphics[]{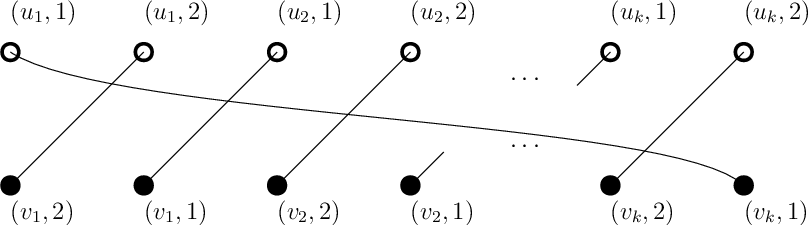}    
        }
        \caption{The vertices and edges of $m_0$ arranged in some part of $D_{m_0}$; the ordering of the vertices here is given by $<$.}
        \label{kast-fig-no-diags-2}
    \end{figure}
    
    and one can see that there are exactly $2k-1$ crossings, hence $k_{m_0,C}=-1$, and \eqref{kast:eq:crossings-one-cycle} holds. Note that in Figure \ref{kast-fig-no-diags-2} the vertices are ordered in $D_{m_0}$ according to $<$, whereas in Figure \ref{kast-fig-no-diags-1} they are arranged as in $G^\times$.

    Now let $m$ be some dimer configuration with some loop $C$ in $p(m)$, and let $v$ be some vertex in $C$. Let $m'$ be the same as $m$, with the edges incident to $(v,1)$, $(v,2)$ swapped - that is, the edges $\{x,(v,1)\}$, $\{(v,2),y\}$ changed to $\{x,(v,2)\}$, $\{(v,1),y)\}$. This changes the sum $i_1+j_1+\cdots+i_k+j_k$ by exactly 1. Meanwhile, the swapping changes the number of crossings $k_{m,C}$ by exactly 1. Hence the equation \eqref{kast:eq:crossings-one-cycle} holds for $m'$ if it holds for $m$. Inductively, \eqref{kast:eq:crossings-one-cycle} holds for all dimer configurations $m$ and all loops $C$ of $m$.\\

    Let us turn now to arcs. Let $m$ be a dimer configuration, and $A$ an arc of $\om=p(m)$. As with loops, we can permute the bulk (non-boundary) vertices of $p^{-1}(A)$ in the graph $D_{m}$ so that they are consecutive, and in the same order as described above with a loop, without changing $(-1)^{k(m)}$. The curves between these bulk vertices in $D_m$ can now only cross each other. The curves in $D_m$ incident with $b_A$ and $w_A$ (the boundary vertices of $A$) may still cross the bulk curves of $A$ (and the curves of other arcs). Assume that $A$ is non trivial and let $k_{m,A}$ be the number of crossings in $D_m$ between curves connecting vertices of $p^{-1}(A)$. We will show that $k_{m,A}$ satisfies
    \be\label{kast:eq:crossings-one-arc}
        (-1)^{k_{m,A}}
        =
        (-1)^{r_1+s_1+\cdots+r_k+s_k}
        (-1)^{\mathds{1}\{b_A>w_A\}}.
    \ee
    Recall that the ordering of the boundary vertices in $D_m$ is the order they appear on the boundary of $G$. Taking $m_0$ analogously to above, with $r_l=s_l=2$ for all $l=1,\dots,k$, we can draw the portion of $D_{m_0}$ corresponding to $A$ as:

    \begin{figure}[h]
        \resizebox{0.85\textwidth}{!}{%
        \includegraphics[]{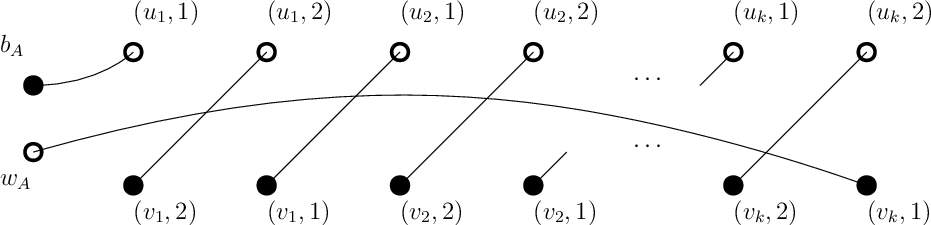}    
        }
        \caption{}
        \label{kast-fig-no-diags-arc}
    \end{figure}

    The case drawn is the case where $w_A<b_A$, and there are $2k-1$ crossings. In the case $w_A>b_A$, the positions of $w_A$ and $b_A$ are exchanged, and there is exactly one more crossing. Hence \eqref{kast:eq:crossings-one-arc} holds. By the same inductive argument as used for loops above, \eqref{kast:eq:crossings-one-arc} holds for all dimer configurations $m$ and all arcs $A$ in $\om=p(m)$.  \\

    It remains to count the crossings between curves corresponding to different arcs (including trivial arcs). We claim that this number is always even. Indeed, in $G$, arcs are planar and non-crossing. The curves for each arc in $D_m$ can be seen as a deformation of the actual arcs in $G$, obtained by identifying the bulk vertices of each arc $A$ (using the fact that we can permute these vertices out of the way as above), and then deforming. Deforming the arcs in this way does not change $(-1)^{k(m)}$, so the number of crossings produced must be even. Taking this result, \eqref{kast:eq:crossings-one-cycle} and \eqref{kast:eq:crossings-one-arc} together, one obtains the lemma.\\
\end{proof}

\begin{proof}[Proof of Proposition \ref{kast:prop:kast}]
    We have that
    \be\label{kast:eq:prop-proof-step1}
        \begin{split}
            \pf K 
            &= 
            \sum_{m}(-1)^{k(m)}\prod_{\substack{xy\in m\\x<y}}K_{x,y}\\
            &= 
            \sum_{\om\in\Om}\sum_{m\in p^{-1}(\om)}(-1)^{k(m)}
             \prod_{\mathrm{doubled~edges} \ e}\prod_{\substack{xy\in e\\x<y}}K_{x,y} \prod_{\mathrm{loops} \ C}\prod_{\substack{xy\in C\\x<y}}K_{x,y}
            \prod_{\mathrm{arcs} \ A}\prod_{\substack{xy\in A\\x<y}}K_{x,y}\\
            &= 
            \sum_{\om\in\Om}\sum_{\mathrm{indices}} \prod_{\mathrm{doubled~edge} \ e}(-1)^{i_e} \prod_{\substack{xy\in e\\x<y}}K_{x,y} \prod_{\mathrm{loops} \ C}(-1)^{1+i_1+\cdots+j_k}\prod_{\substack{xy\in C\\x<y}}K_{x,y} 
             \\
            &\quad \prod_{\mathrm{trivial~arcs} \ A} \prod_{\substack{xy\in A\\x<y}}K_{x,y} \prod_{\substack{\mathrm{non-trivial} \\ \mathrm{arcs} \ A}}(-1)^{r_1+s_1+\cdots+r_k+s_k + \mathds{1}\{b_A>w_A\}} 
            \prod_{\substack{xy\in A\\x<y}}K_{x,y},
        \end{split}
    \ee
    where the sum over ``indices'' is the sum over choices of indices $i$ for doubled edges, $i_1,j_1, \dots,i_k,j_k$ for loops and $r_1,s_1,\dots,r_k,s_k$ for arcs, respectively, and the second equality uses Lemma \ref{kast:lem:sign-formula}. First of all, for a doubled edge $e = \{u,v\}$,
    \bestar
        \sum_{i_e=1,2}(-1)^{i_e} \prod_{\substack{xy\in e\\x<y}}K_{x,y} = K_{(u,1),(v,1)}K_{(u,2),(v,2)}-K_{(u,1),(v,2)}K_{(u,2),(v,1)} = \det(\phi_e) = 1.
    \eestar
    \\
    
    For a trivial arc $A=w_Ab_A$,
    \bestar
        \prod_{\substack{xy\in A\\x<y}}K_{x,y} = (-1)^{\mathds{1}\{w_A > b_A\}}\xi_{w_A,b_A}\nu_{w_A,b_A} = \nu_{w_A,b_A}=1, 
    \eestar
    by the assumption on the Kasteleyn phases.
    \\
    
    Now, for a loop $C$, we show that
    \be\label{kast:eq:trace-cycle-calc}
        \sum_{i_1,j_1,\dots,i_k,j_k}(-1)^{1+i_1+\cdots+j_k}\prod_{\substack{xy\in C\\x<y}}K_{x,y} 
        =
        (-1)^{\tfrac{|C|}{2}}\tr (\phi_C),
    \ee
    where, recall, $\phi_C$ is the monodromy of the connection $\Phi$ around the loop $C$, and by $|C|$ we mean the number of edges in $C$. Recalling that in the ordering of vertices, all white vertices are before all black vertices, the product on the left hand side is 
    \bestar
        \begin{split}
            \prod_{\substack{xy\in C\\x<y}}K_{x,y} 
            &=
            K_{(u_1,i_1),(v_1,j_1)} K_{(u_2,3-i_2),(v_1,3-j_1)} K_{(u_2,i_2),(v_2,j_2)}  \cdots K_{(u_k,i_k),(v_k,j_k)} K_{(u_1,3-i_1),(v_k,3-j_k)}\\
            &=
            \left(\prod_{\substack{xy\in C\\x<y}}\xi_{x,y}\right)
            (\phi_{u_1,v_1})_{i_1,j_1}
            (\phi_{u_2,v_1})_{3-i_2,3-j_1}
            \cdots 
            (\phi_{u_k,v_k})_{i_k,j_k}
            (\phi_{u_1,v_k})_{3-i_1,3-j_k}.
        \end{split}
    \eestar
    We now use that for matrices $\phi\in\SL_2(\CC)$, $\phi_{3-i,3-j}=(-1)^{i+j}(\phi^{-1})_{j,i}$, which one can verify by hand. We replace each factor $(\phi_{u_{l+1},v_l})_{3-i_{l+1},3-j_l}$ with $(-1)^{j_l+i_{l+1}}(\phi_{v_l,u_{l+1}})_{j_l,i_{l+1}}$, and the above becomes:
    \bestar
        \begin{split}
            \left(\prod_{\substack{xy\in C\\x<y}}\xi_{x,y}\right)
            \left((-1)^{i_1+\cdots+j_k}\right)
            (\phi_{u_1,v_1})_{i_1,j_1}
            (\phi_{v_1,u_2})_{j_1,i_2}
            \cdots
            (\phi_{u_k,v_k})_{i_k,j_k}
            (\phi_{v_k,u_1})_{j_k,i_1}.
        \end{split}
    \eestar
    Now the factor $(-1)^{i_1+\cdots+j_k}$ in the above cancels with the same factor on the left hand side of \eqref{kast:eq:trace-cycle-calc}, and the sum over all $i_1,j_1,\dots,i_k,j_k$ produces the trace. The product of factors $\xi_e$ around a loop $C$ is $(-1)^{\tfrac{|C|}{2}+1}$ by the Kasteleyn phases; the $+1$ in this exponent cancels with the remaining factor $-1$ in the left hand side of \eqref{kast:eq:trace-cycle-calc}. Hence \eqref{kast:eq:trace-cycle-calc} holds.\\
    
    It remains to prove the analogous formula for arcs:
    \be\label{kast:eq:trace-arc-calc}
        \sum_{r_1,s_1,\dots,r_k,s_k}(-1)^{r_1+s_1+\cdots+r_k+s_k + \mathds{1}\{b_A>w_A\}}
        \prod_{\substack{xy\in A\\x<y}}K_{x,y} 
        =
        (-1)^{\tfrac{1}{2}(|A|-1)}
        \psi_{w_A}^\T\phi_A\psi_{b_A},
    \ee
    where by $|A|$ we mean the number of edges in $A$, and where, recall, $\phi_A$ is the product of the connection along the bulk edges of $A$, directed towards $b_A$. The proof is very similar to the loop case. 
    We obtain:
    \bestar
        \begin{split}
            \prod_{\substack{xy\in A\\x<y}}K_{x,y} 
            &=
            \left(\prod_{\substack{xy\in A\\x<y}}\xi_{x,y}\right)
            (\psi_{b_A,u_1})_{3-r_1}
            (\phi_{u_1,v_1})_{r_1,s_1} 
            (\phi_{u_2,v_1})_{3-r_2,3-s_1}\\
            & \hspace{3cm}
            \cdots 
            (\phi_{u_k,v_{k-1}})_{3-r_k,3-s_{k-1}}
            (\phi_{u_k,v_k})_{r_k,s_k}
            (\psi_{w_A,v_k})_{3-s_k}\\
            &=
            \left(\prod_{\substack{xy\in A\\x<y}}\xi_{x,y}\right)
            (-1)^{r_1+s_1+\cdots+r_k+s_k}
            (\psi_{w_A,v_k})_{3-s_k}
            (\phi_{v_k,u_k})_{3-s_k,3-r_k}
            (\phi_{u_k,v_{k-1}})_{3-r_k,3-s_{k-1}}\\
            & \hspace{3cm}
            \cdots       
            (\phi_{u_2,v_1})_{3-r_2,3-s_1} 
            (\phi_{v_1,u_1})_{3-s_1,3-r_1}
            (\psi_{b_A,u_1})_{3-r_1}
        \end{split}
    \eestar
    (where in the last line we also reorder the factors for notational convenience). Similarly to the loop case, the second equality is obtained by using the formula $\phi_{u_l,v_l}(r_l,s_l)=(-1)^{r_l+s_l}\phi_{v_l,u_l}(3-s_l,3-r_l)$ for each $l=k,\dots,2$. Now the factor $(-1)^{r_1+s_1+\cdots+r_k+s_k}$ cancels with the same appearing on the left hand side of \eqref{kast:eq:trace-arc-calc}, and the sum over all indices $r_1,s_1,\dots,r_k,s_k$ produces the product $\psi_{w_A}^\T\phi_A\psi_{b_A}$, where $\phi_A=\phi_{v_k,u_k}\phi_{u_k,v_{k-1}}\cdots\phi_{u_2,v_1}\phi_{v_1,u_1}$. 
    
    One then only needs that $\prod_{\substack{xy\in A\\x<y}}\xi_{x,y} = (-1)^{\mathds{1}\{b_A>w_A\}}(-1)^{\tfrac{1}{2}(|A|-1)}$. Recall that we constructed $\xi$ to be a set of Kasteleyn phases such that $\xi_{w,b} = (-1)^{\mathds{1}\{w>b\}}$ on boundary edges $\{w,b\}$ (apart perhaps from some edge $e_0$). Let $C_A$ be the loop in $G$ made up of an arc from $w_A$ to $b_A$, and then the boundary edges from $w_A$ to $b_A$ (not in the direction including $e_0$), which we denote $\partial C_A$. One can show from the above that $\prod_{\substack{xy\in \partial C_A\\x<y}}\xi_{x,y}= (-1)^{\mathds{1}\{b_A>w_A\}}(-1)^{\tfrac{1}{2}(|\partial C_A|+1)}$. Then 
    \bestar
        \begin{split}
            \prod_{\substack{xy\in A\\x<y}}\xi_{x,y}
            &=
            \prod_{\substack{xy\in C_A\\x<y}}\xi_{x,y}\prod_{\substack{xy\in \partial C_A\\x<y}}\xi_{x,y}\\
            &=
            (-1)^{\tfrac{1}{2}|C_A|+1}(-1)^{\mathds{1}\{b_A>w_A\}}(-1)^{\tfrac{1}{2}(|\partial C_A|+1)}\\
            &=
            (-1)^{\mathds{1}\{b_A>w_A\}}(-1)^{\tfrac{1}{2}(|A|-1)}.
        \end{split}
    \eestar

    We now have both \eqref{kast:eq:trace-cycle-calc} and \eqref{kast:eq:trace-arc-calc}, which, upon substituting into \ref{kast:eq:prop-proof-step1}, produces the right hand side of the Proposition \ref{kast:prop:kast}, multiplied by the factor $\prod_C(-1)^{\tfrac{1}{2}|C|}\prod_A(-1)^{\tfrac{1}{2}(|A|-1)}$. This final factor disappears on seeing that $\tfrac{1}{2}(\sum_{C}|C|+\sum_A(|A|-1))$ is half the number of vertices of $V\setminus\partial$, which we assumed to be even.
   
\end{proof}

\section{Folded and shifted dimers}\label{section:l&a}
\subsection{Introduction of the models}
\subsubsection{Temperleyan and piecewise Temperleyan approximations}
We will use as much as possible the notation of \cite{Ken14}. Let $U\subset\CC$ be a simply connected (bounded) set with a smooth boundary. We say that $(\Ue)_{\eps >0}$ is a sequence of graphs approximating $U$ if the following holds:

\begin{definition}[Approximating sequence]\label{def:approx}
    For every $\eps >0$, $\Ue$ is a graph with vertex set $V(\Ue) \subset \bar{U} \cap \eps\ZZ^2$ and edges connecting points at distance $\eps$: if $u,v \in V(\Ue)$, $u \sim v$ in $\Ue$ if and only if $u \sim v$ in $\eps\ZZ^2$. For every $\eps >0$, $\Ue$ is simply connected: if we cover every vertex of $\Ue$ by a square of side $\eps$ centered at the vertex, we obtain a simply connected set. The vertex boundary of $\Ue$ (i.e. the vertices of $\Ue$ which are connected in $\eps \ZZ^2$ to at least one vertex not in $\Ue$) is within $O(\eps)$ of $\partial U$. 
\end{definition}
An example is drawn on Figure \ref{fig:Temperley}: the vertices of the graph $\Ue$ are the black squares enclosed by $U$. Its edges are not drawn, they link nearest neighbours.

From now on, let $U^r$ be a simply connected open set with smooth boundary which is symmetric by reflection along the horizontal axis. For such symmetric sets $U^r$, we denote by $U = U \cap (\RR \times \RR_{>0})$ the restriction of $U^r$ to the strict upper half plane. Let $z \in U$ be fixed. For technical reasons, we assume that there exists $\zp \in \partial U^r$ and $\d >0$ such that the boundary of $U^r$ is horizontal in the neighbourhood $B(\zp, \d)$. We also assume that $U^r$ is contained in the half-plane below this flat horizontal boundary.
\begin{remark}
    This technical hypothesis is necessary for the results of Appendices \ref{appendix:A} and \ref{app:B} to hold. It could be lightened a bit by using the comments after Lemma \ref{lem:Green:boundary}: $\d$ could go to zero with $\eps$ (but we must have $\eps = o(\d)$). 
\end{remark}
Consider an approximating sequence $(\Uer)_{\eps>0}$ of $\Ud$ which is \emph{symmetric} by reflection along the horizontal axis and such that for every $\eps >0$ the vertex boundary of $\Uer$ is horizontal in the neighbourhood $B(\zp, \d)$.

\bigskip

\paragraph{\textbf{Symmetric Temperleyan approximation of $U^r$.}} We define the bipartite Temperleyan domain $\Ged$ associated with $\Uer$ which has two types of black vertices: $B_1(\Ged)$ corresponds to the vertices of $\Uer$ from which we remove the rightmost vertex on the horizontal axis $b_0$, and $B_0(\Ged)$ corresponds to the inner faces of $\Uer$. The graph $\Ged$ has two types of white vertices $W_0(\Ged)$ corresponding to the vertical edges of $\Uer$ and $W_1(\Ged)$ corresponding to the horizontal edges of $\Uer$.  There are no edges between vertices of the same colour ($\Ged$ is bipartite), and there is an edge between $b \in B(\Ged) = B_0(\Ged) \cup B_1(\Ged)$ and $w \in W(\Ged) = W_0(\Ged) \cup W_1(\Ged)$ if and only if the edge $w$ is incident to the face or vertex $b$ in $\Uer$ (see Figure \ref{fig:Temperley}). We also denote by $B_0, B_1$ the vertices and faces of $\eps \ZZ^2$ and by $W_0, W_1$ the vertical and horizontal edges of $\eps \ZZ^2$, so we can write for example $B_0(\Ged) \subset B_0$. By Temperley's bijection \cite{TemFish,KPW}, perfect matchings of $\Ged$ are in bijection with spanning trees of $\Uer$ rooted at $b_0$  (see Kenyon). The sequence $(\Ged)_{\eps >0}$ is a \emph{symmetric Temperleyan approximation} of the open set $\Ud$. Dimer configurations of $\Ge^r$ are in bijection with spanning trees of $\Ue^r$ rooted at $b_0$. 

\bigskip

\paragraph{\textbf{Temperleyan approximation of $U$.}}For every $\eps >0$, let $\Ue$ be the graphs obtained by removing from $\Ue^r$ the vertices in the strict lower half plane and the edges incident to them. The sequence $(\Ge)_{\eps > 0} = (\Ge^1)_{\eps >0}$ obtained by restricting $\Ged$ to the upper half plane (removing all vertices of $\Ged$ in $\RR \times \RR_{<0}$, and the edges incident to them) is a \emph{Temperleyan approximation} of the open set $U$. Dimer configurations of $\Ge = \Ge^1$ are in bijection with spanning trees of $\Ue$ rooted at $b_0$. 

\bigskip

\paragraph{\textbf{Piecewise Temperleyan approximation of $U$.}} The sequence $(\Ge^2)_{\eps >0}$ obtained by restricting $\Ge^r$ to the \emph{strict} upper half plane (i.e. removing all vertices of $\Ge^r$ in $\RR \times \RR_{\leq 0}$ and the edges incident to them) is a \emph{piecewise Temperleyan approximation} of the open set $U$. Indeed, it has exactly two convex white corner, $v_l^*$ and $v_r^*$ respectively on the left and right of the horizontal axis, see the definition in Section 5.1 of \cite{Russ}.

\subsubsection{Loops and arcs on a Temperleyan approximation}


We say that a subset of edges $\omega \subset E(\Ge)$ is a loops and arcs configuration if it is made of non-intersecting loops in the bulk, arcs connecting points on the horizontal axis and doubled edges, such that every vertex of $E(\Ge)$ lies on a loop, arc or doubled edge. We denote by $\Omega(\Ge)$ the set of loops and arcs configurations on $\Ge$. There are two natural ways to obtain a random loops and arcs configurations:
\begin{itemize}
\item
Folding a random uniform dimer configuration on $\Ged$ along the horizontal axis
\item
Superimposing a random uniform dimer configuration on $\Ge^1$ with an independent random uniform dimer configuration on $\Ge^2$  
\end{itemize}
In both cases, we obtain a random loops and arcs configuration $\omega \subset E(\Ge)$ of $\Ge$ (see Figure \ref{fig:Temperley}) which we call a \emph{folded} or \emph{shifted} dimer configuration. We denote by $\mu^1_{\eps}, \mu^2_{\eps}$ their respective probability laws on $\Omega(\Ge)$ and by $\Ee^1, \Ee^2$ the expectation with respect to these laws. 

Given $\om \in \Om(\Ge)$, we say that an arc $A$ of $\om$ \emph{encloses} a point $z \in U$ if any continuous path in $U$ from $z$ to $\zp$ crosses the arc $A$. For every $\eps >0$, define two integer-valued random variables: $\nez$ is the number of arcs enclosing the fixed point $z \in U$, and $\oez$ is the indicator that there is an odd number of arcs enclosing the point $z$ (that is the parity of $\nez$). Our main result describes the joint law of $\nez$ and $\oez$ in the scaling limit for the folded and shifted models.

\subsubsection{Discrete and continuous Green function.}
In this paragraph, we recall the notation and results of \cite{Ken14} on continuous Green functions. Let $g : u,v \in U^r \to g(u,v) \in \RR$ be the continuous Green function with Dirichlet boundary conditions on $U^r$. Let $\tilde{g}: u,v \in U^r \to g(u,v) \in \CC$ be the analytic function of $v$ whose real part is the Dirichlet Green function. Define its Wirtinger derivatives 
\be\label{eq:def:Fpm}
    \forall u, v \in U^r,
    \left\{
        \begin{array}{ll}
            F_-(u,v) & = \frac{\partial \tilde{g}(u,v)}{\partial \overline{u}}\\
            F_+(u,v) & = \frac{\partial \tilde{g}(u,v)}{\partial u} \\
        \end{array}
    \right..
\ee
The holomorphic derivative $F_+$ is an an analytic function of $(u,v)$ while the anti-holomorphic derivative $F_-$ is an analytic function of $(\overline{u},v)$. Lemma \ref{lem:Green:boundary} (which adapts Theorems 13 and 14 of \cite{Ken00} to our setting) of the appendix gives the asymptotic of the inverse Kasteleyn matrix on $\Ge^r$ in terms of $F_+$ and $F_-$, while Lemma \ref{lem:russ} (which adapts Theorem 6.1 of \cite{Russ} to our setting) of the appendix gives the asymptotic of the inverse Kasteleyn matrix on $\Ge^1$ and $\Ge^2$ in terms of $F_+$ and $F_-$

\begin{remark}\label{rem:Schwarz}
Recall that the boundary of $U^r$ is horizontal in the ball $B(\zp, \d/2)$ and that $U^r$ is contained in the half-plane below this horizontal boundary. Denote by $s$ the reflection along this horizontal boundary. Since the Green function $g$ has Dirichlet boundary conditions, i.e. it is $0$ on the boundary of $U^r$, the Schwarz reflection principle (see for example Section 6.5 of \cite{Ahlfors1966}) implies that $i\tilde{g}(u,v)$ as a function of $v$ extends to an analytic (except at $u$ and $s(u)$) function on $U^r \cup s(U^r)$. Hence $F_+$ and $F_-$ also extend analytically (as functions of $v$) to this domain, in particular they are well-defined at $\zp$ and they remain bounded near $\zp$ (and so do their derivatives). In the proof of Lemma \ref{lem:Green:boundary}, we will detail a discrete version of this argument developed by Kenyon in the proof of Theorem 14 of \cite{Ken00}.
\end{remark}

Let $\gamma$ be a path from $z$ to $\zp$, $\gamma \subset U$ except for the endpoint. Our main theorem expresses $n$-th moments of random variables in terms of $n$-fold integrals along $\gamma$ of $F_+$ and $F_-$. In particular, note that $F_-$ is an analytic function of $(\overline{u},v)$, $F_+$ is an analytic function of $(u,v)$, $\overline{F_-}$ is an analytic function of $(u,\overline{v})$ and $\overline{F_+}$ is an analytic function of $(\overline{u},\overline{v})$. Hence, to have $n$-fold integrals of analytic functions along $\gamma$, for $\sigma\in\{\pm1\}^n$, we will be looking at terms of the form 
\be\label{eq:analytic:naive}
    \idotsint_{\gamma} \prod_{i=0}^{n-1}F^{(\sigma_i)}_{-\sigma_i\sigma_{i+1}}(\overline{z_{i+1}},z_i) \prod_{i=0}^{n-1} dz^{(\sigma_i)}_i,
\ee
where $dz_{k_i}^{(1)}=dz_{k_i}$ and $dz_{k_i}^{(-1)}=d\ol{z_{k_i}}$, and similar for $F_+$ and $F_-$, and where the subscript $-\sigma_i\sigma_{i+1}$ should be read as $\pm$ when $-\sigma_i\sigma_{i+1}=\pm1$, respectively. This is well defined since the integral of an analytic function does not depend on the path.

\subsection{Main result and examples.}
For all $n \in \NN$, let
\bestar
    c_n(z,\zp) 
    = 
    2i^n\sum_{\sigma\in\{\pm1\}^n}
    \idotsint_{\gamma} \prod_{i=0}^{n-1}\sigma_i F^{(\sigma_i)}_{-\sigma_i\sigma_{i+1}}(\overline{z_{i+1}},z_i) \prod_{i=0}^{n-1} dz^{(\sigma_i)}_i.
\eestar
For example,
\be\label{eq:ex:c}
    \left\{
    \begin{array}{ll}
    c_1(z,\zp) &= -4\Im\left\{\int_{\gamma}F_-(\overline{z},z)dz\right\}\\
    c_2(z,\zp) &= 4\Re\left\{\int\int_{\gamma}F_+(\overline{z_1},z_0)\overline{F_+(\overline{z_0},z_1)}dz_0\overline{dz_1} - \int\int_{\gamma}F_-(\overline{z_1},z_0)F_-(\overline{z_0},z_1)dz_0dz_1\right\}
    \end{array}
    \right.
\ee
Note that $c_n(z,\zp)=c_n(z,\zp,U)$ is a conformally invariant quantity, in the sense that if $f:U\to U'$ is a conformal map, then $c_n(z,\zp,U)=c_n(f(z),f(\zp),f(U))$. This follows from two facts: that each integrand is analytic (resp. antianalytic) in $z_i$ when $\sigma_i=1$ (resp. $\sigma_i=-1$), and that the functions $F_{\pm}=F_{\pm,U}$ satisfy $F_{\pm,U}(u,v) = (f'(u))^{(\pm)} F_{\pm,f(U)}(f(u),f(v))$, where the superscript $(\pm)$ again denotes whether we take the complex conjugate or not. See Proposition 15 of \cite{Ken00}, as well as the discussion after Proposition 20 of the same paper.

Note also that $c_n(z,\zp)$ is always real, since the terms in the sum corresponding to $\sigma$ and $-\sigma$ are complex conjugates. Moreover, since $F_+$ and $F_-$ are analytic on $U$ and remain bounded near $\zp$, the $c_n(z,\zp)$ grow at most exponentially in $n$: there exists an absolute constant $C$ (depending only on $\gamma$ and $U$) such that 
\be\label{eq:ck:bounded}
    \forall n \in \ZZ_{\geq 0},~c_n(z,\zp) \leq C^n.\\
\ee

To state our main result, we need to introduce the \emph{complete Bell polynomials}, defined by
\bestar
    B_n(X_1,\dots,X_n) = n! \sum_{j_1 + 2j_2 + \dots + nj_n = n} \prod_{k=1}^n \frac{X_k^{j_k}}{(k!)^{j_k}(j_k)!}.
\eestar
For example $B_0 = 1, B_1 = X_1, B_2 = X_1^2 + X_2$.\par
Recall that $\nez$ and $\oez$ are respectively the number of arcs enclosing $z$, and the parity of the number of arcs enclosing $z$.
\begin{theorem}\label{thm:asymp:ne:oe}
    Let $\omega \in \Om(\Ge)$ be a folded or shifted dimer configuration, with law $\mu_{\eps} = \mu_{\eps}^1$ or $\mu_{\eps} = \mu_{\eps}^2$, and expectation with respect to this law denoted by $\Ee$. For all $n \in \ZZ_{\geq 0}$, for all $\sigma \in \{0,1\}$,
    \bestar
        \Ee\left[\binom{\frac{\nez-\oez}{2}}{n}\oez^{\sigma}\right] \overset{\eps \to 0}{\longrightarrow} \frac{(-1)^{n+\sigma}}{(2n+\sigma)!}B_{2n+\sigma}\left(\left((-2)^{k-1}(k-1)!c_k(z,\zp)\right)_{1 \leq k \leq 2n+\sigma}\right).
    \eestar
\end{theorem}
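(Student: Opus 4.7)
The plan is to realise the moments $\Ee\!\left[\binom{(\nez-\oez)/2}{n}\oez^{\sigma}\right]$ simultaneously as Taylor coefficients of a Pfaffian ratio $\pf(K_\alpha)/\pf(K_0)$ coming from Kenyon's formula (Proposition~\ref{kast:prop:kast}), and as Taylor coefficients of an exponential generating function $F(\alpha)$ involving the continuum integrals $c_k(z,\zp)$, obtained via $\pf^2=\det$ together with the inverse Kasteleyn asymptotics of Appendices~\ref{appendix:A} and~\ref{app:B}; matching the two expansions in the limit $\eps\to 0$ then yields the theorem.

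\textbf{Combinatorial side.} I would fix a dual zipper $\gamma_\eps$ approximating $\gamma$ from $z$ to $\zp$, and apply Proposition~\ref{kast:prop:kast} with the nilpotent $\SL_2$ connection $\phi_e=(I+\alpha N)^{\pm 1}$ on edges crossing $\gamma_\eps$, where $N=\bigl(\begin{smallmatrix}0 & 1 \\ 0 & 0\end{smallmatrix}\bigr)$ and the $\pm 1$ exponent is determined by the direction of crossing, and $\phi_e=I$ elsewhere. Because $N^2=0$ one has $\tr((I+\alpha N)^k)=2$ for every $k$, so every loop contributes $2$ regardless of whether it surrounds $z$, and all of the $\alpha$-dependence is carried by arcs. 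Choosing the boundary covectors $\psi_w,\psi_b$ so that $\psi_w^\T\psi_b=2$ (matching the arc multiplicity of the partition function) and $\psi_w^\T N\psi_b=1$, each arc $A$ enclosing $z$ contributes $1+k_A\alpha$ with $k_A\in\{\pm 1\}$ the algebraic intersection of $A$ with $\gamma$. A short combinatorial lemma—counting white versus black axis endpoints and using that each arc has exactly one endpoint of each colour—will show that the signed total $n_+-n_-$ of enclosing arcs is a \emph{deterministic} integer $d=L_w-L_b\in\{-1,0,+1\}$ (the colour imbalance of axis vertices to the left of the projection of $\zp$). In particular $\oez=|d|$ is itself deterministic, and
\begin{equation*}
\prod_{A\text{ enc. }z}(1+k_A\alpha)
=(1-\alpha^2)^{(\nez-\oez)/2}(1+\sgn(d)\alpha)^{\oez}.
\end{equation*}
Averaging this identity over the folded (respectively shifted) measure and expanding in $\alpha$ yields
\begin{equation*}
\frac{\pf(K_\alpha)}{\pf(K_0)}
=\sum_{n\ge 0}(-1)^n\alpha^{2n}\Ee\!\left[\binom{(\nez-\oez)/2}{n}\right]
+\sgn(d)\sum_{n\ge 0}(-1)^n\alpha^{2n+1}\Ee\!\left[\binom{(\nez-\oez)/2}{n}\oez\right].
\end{equation*}

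\textbf{Analytic side.} From $\pf(K_\alpha)^2=\det K_\alpha$ and the fact that $\Delta K:=(K_\alpha-K_0)/\alpha$ has finite rank, supported on the zipper,
\begin{equation*}
2\log\frac{\pf(K_\alpha)}{\pf(K_0)}
=\sum_{n\ge 1}\frac{(-1)^{n+1}}{n}\alpha^n\,\tr\!\left((K_0^{-1}\Delta K)^n\right).
\end{equation*}
Each trace is an $n$-fold cyclic sum over ordered tuples of zipper edges of products of entries $K_0^{-1}(b,w)$. By Lemmas~\ref{lem:Green:boundary} and~\ref{lem:russ} the entry $K_0^{-1}(b,w)$ equals, to leading order in $\eps$, a linear combination of $F_+(b,w)$, $F_-(b,w)$, $\overline{F_+(b,w)}$ and $\overline{F_-(b,w)}$ whose coefficients depend on the sublattices $B_0,B_1$ and $W_0,W_1$ to which $b$ and $w$ belong. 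Since consecutive zipper edges alternate sublattices, the four branches at each of the $n$ steps produce exactly the sum over $\sigma\in\{\pm 1\}^n$ appearing in the definition of $c_n(z,\zp)$, and the $\eps$-weighted Riemann sums along the zipper converge to the corresponding $n$-fold contour integrals along $\gamma$. Careful bookkeeping of signs and Kasteleyn phases will yield $\tr((K_0^{-1}\Delta K)^n)\to 2^n c_n(z,\zp)$, whence
\begin{equation*}
\frac{\pf(K_\alpha)}{\pf(K_0)}\;\xrightarrow[\eps\to 0]{}\;F(\alpha):=\exp\!\Bigl(\sum_{k\ge 1}\tfrac{(-2)^{k-1}}{k}c_k(z,\zp)\alpha^k\Bigr)=\sum_{m\ge 0}\tfrac{B_m(X_1,\ldots,X_m)}{m!}\alpha^m,
\end{equation*}
with $X_k=(-2)^{k-1}(k-1)!c_k(z,\zp)$ as in the theorem.

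\textbf{Matching and main obstacle.} Equating the two expansions of $\pf(K_\alpha)/\pf(K_0)$ coefficient by coefficient in $\alpha$ produces the formula of Theorem~\ref{thm:asymp:ne:oe}; the sign $(-1)^{n+\sigma}$ is the $(-1)^n$ of the combinatorial side, multiplied for $\sigma=1$ by $\sgn(d)=-1$ (a convention that can always be arranged by orienting $\gamma$ appropriately). The shifted-model case is parallel, with Lemma~\ref{lem:russ} replacing Lemma~\ref{lem:Green:boundary} and the deterministic alternation of arc orientations noted in the introduction playing the role of the colour-imbalance count. The main obstacle I expect is to establish $\tr((K_0^{-1}\Delta K)^n)\to 2^n c_n(z,\zp)$ uniformly in $n$ with a summable error, so that the identity at the level of generating functions passes to the limit. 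This requires splitting each cyclic sum into a bulk part (controlled by Kenyon's interior asymptotics of $K_0^{-1}$) and two boundary pieces near $z$ and $\zp$, the latter relying on the reflected Green's function corrections and the piecewise Temperleyan boundary estimates of Appendix~\ref{app:B}, together with the Schwarz-reflection analyticity of $F_\pm$ across the flat part of $\partial U^r$ noted in Remark~\ref{rem:Schwarz}.
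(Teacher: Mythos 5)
Your strategy — nilpotent $\SL_2$ connection on a zipper, Kenyon's formula, passing from Pfaffian ratio to $\det(I+\alpha K_0^{-1}\Delta K)$ and its log-trace expansion, inverse-Kasteleyn asymptotics, then Bell polynomials — is the same route the paper takes, down to the choice $\phi_e=I+\alpha N$ with $N$ upper-triangular nilpotent and $\psi=(1,1)^\T$ in the folded case (the shifted case uses a different nilpotent, $\tfrac{1}{2}\bigl(\begin{smallmatrix}1&-1\\1&-1\end{smallmatrix}\bigr)$, with $\psi=(1,0)^\T$, which you do not specify but which is the same idea). However, there is a genuine error in the combinatorial step.

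You assert that the signed total $n_+-n_-$ of enclosing arcs equals a \emph{deterministic} colour imbalance $d=L_w-L_b\in\{-1,0,+1\}$, and in particular that $\oez=|d|$ is deterministic. That is false, and it is not what the combinatorial lemma you are gesturing at proves. The correct statement (which the paper establishes via a nestedness/parity argument on axis vertices between consecutive enclosing-arc endpoints) is that the enclosing arcs strictly \emph{alternate} in orientation, with the orientation of the outermost one pinned by the colours of the leftmost and rightmost axis vertices. This gives $l_\eps(z)-r_\eps(z)=\mathbbm{1}\{\nez\text{ is odd}\}=\oez\in\{0,1\}$: the \emph{sign} of the excess (when nonzero) is deterministic, but the value $0$ versus $1$ is the parity of $\nez$, which is a nontrivial random variable — indeed the theorem computes $\Ee[\oez]\to\tfrac12-\tfrac{y}{\pi}$ in the strip, which is neither $0$ nor $1$. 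Your displayed identity
\begin{equation*}
\prod_{A\text{ enc. }z}(1+k_A\alpha)=(1-\alpha^2)^{(\nez-\oez)/2}\,(1+\sgn(d)\alpha)^{\oez}
\end{equation*}
is in fact still correct once one reads $\sgn(d)$ as the \emph{fixed} sign of the nonzero excess (not as $d/|d|$, which is undefined when $d=0$), so the subsequent generating-function expansion goes through. But the stated justification ("$\oez$ is deterministic") is wrong and should be replaced by the alternation argument.

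Two smaller points. First, the asserted limit $\tr\bigl((K_0^{-1}\Delta K)^n\bigr)\to 2^n c_n(z,\zp)$ is off by a factor of $2^n$ under your own normalisation $\Delta K:=(K_\alpha-K_0)/\alpha$: with the paper's conventions this trace converges to $c_n(z,\zp)$, and the $2^k$ in the final Bell-polynomial argument $X_k=(-2)^{k-1}(k-1)!c_k$ comes from evaluating the log-det series at $2\alpha$ (so the $2$'s appear once, in the exponent, not again in the trace). Second, you cannot literally apply Lemma~\ref{lem:Green:boundary} to $K_0^{-1}$ on the folded graph $\Ge^r$: one must first perform the unfolding gauge transform (Equation~\eqref{eq:gauge:folded}) so that the Kasteleyn phases match discrete holomorphy on $\eps\ZZ^2$; without it the sublattice-dependent coefficients of $F_\pm,\ol{F_\pm}$ do not come out right. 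Both are bookkeeping issues rather than strategic ones, but they are exactly the sort of detail hidden in your phrase "careful bookkeeping of signs and Kasteleyn phases".
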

The proof of this theorem given in Section \ref{sec:proof-main-thm}. The theorem gives the following corollary:
\begin{corollary}\label{cor:joint:moments}
    The asymptotics of all joint moments of $(\nez,\oez)$ can be computed explicitly: more precisely, for all $n \in \ZZ_{\geq 0}$ and $\sigma \in \{0,1\}$, $\Ee[\nez^n\oez^\sigma]$ converges towards a polynomial in the $\left(c_k(z,\zp)\right)_{1 \leq k \leq 2n+\sigma}$. For example,   
    \be\label{eq:cor:first:two}
        \left\{
        \begin{array}{ll}
            \Ee[\oez] & \overset{\eps \to 0}{\longrightarrow} -c_1(z,\zp)\\
            \Ee[\nez] & \overset{\eps \to 0}{\longrightarrow} 2c_2(z,\zp) - c_1(z,\zp)^2 - c_1(z,\zp)\\
            \mathrm{var}[\nez] & \overset{\eps \to 0}{\longrightarrow}
            -\tfrac{2}{3}c_1^4 - \tfrac{4}{3}c_1^3 - 3c_1^2 - c_1
            +\tfrac{32}{3}c_1c_3 - 4c_2^2 + 4c_2
            +\tfrac{16}{3}c_3 - 16c_4,
        \end{array}
        \right.
    \ee
    where in the last line we write $c_i=c_i(z,\zp)$ for brevity. The parity of the number of arcs enclosing a given point $\oez$ and the number of arcs surrounding a given point $\nez$ converge in law towards random variables $o(z), n(z)$ which are conformally invariant.
\end{corollary}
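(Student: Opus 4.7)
My plan is to (1) reduce arbitrary joint moments of $(\nez,\oez)$ to the ``shifted binomial moments'' already controlled by Theorem~\ref{thm:asymp:ne:oe}, (2) read off the explicit examples \eqref{eq:cor:first:two} by evaluating a few low-degree Bell polynomials, and (3) upgrade moment convergence to convergence in law and establish conformal invariance.

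For the reduction I would set $k_\eps := (\nez - \oez)/2$, which is a nonnegative integer because $\oez$ is the parity of $\nez$; then $\nez = 2k_\eps + \oez$ and $\oez^2 = \oez$. Consequently every monomial $\nez^m\oez^\sigma$ equals an integer polynomial in $k_\eps$ times either $1$ or $\oez$, and expanding each $k_\eps^j$ in the falling-factorial basis $\{\binom{k_\eps}{i}\}_{0\le i\le j}$ via Stirling numbers produces a finite rational-coefficient expansion
$$
\nez^m\oez^\sigma \;=\; \sum_{2i+\sigma'\le m+\sigma} a_{i,\sigma'}\binom{k_\eps}{i}\oez^{\sigma'}
$$
with coefficients $a_{i,\sigma'}$ independent of $\eps$. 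Taking expectations and applying Theorem~\ref{thm:asymp:ne:oe} termwise gives convergence of $\Ee[\nez^m\oez^\sigma]$ to a polynomial in $c_1(z,\zp),\dots,c_{2m+\sigma}(z,\zp)$. Using $B_1=X_1,\, B_2=X_1^2+X_2,\, B_3=X_1^3+3X_1X_2+X_3,\, B_4=X_1^4+6X_1^2X_2+4X_1X_3+3X_2^2+X_4$ with $X_k=(-2)^{k-1}(k-1)!\,c_k$, the cases $(n,\sigma)=(0,1)$ and $(1,0)$ give $\Ee[\oez]\to -c_1$ and $\Ee[k_\eps]\to c_2-c_1^2/2$, from which $\Ee[\nez]\to 2c_2-c_1^2-c_1$; the variance is obtained by expanding $\nez^2=4k_\eps^2+4k_\eps\oez+\oez$, rewriting $k_\eps^2=2\binom{k_\eps}{2}+\binom{k_\eps}{1}$, substituting the evaluations of Theorem~\ref{thm:asymp:ne:oe} in cases $(1,0),(1,1),(2,0)$, and subtracting $(\Ee[\nez])^2$.

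For convergence in law I would use the bound $|c_k|\le C^k$ from \eqref{eq:ck:bounded}, which yields $|X_k|\le(2C)^k(k-1)!$, together with the exponential generating identity
$$
\sum_{n\ge 0}B_n(X_1,\ldots,X_n)\frac{t^n}{n!}=\exp\!\Bigl(-\tfrac12\sum_{k\ge 1}\tfrac{(-2t)^k}{k}c_k\Bigr),
$$
which therefore has a positive radius of convergence. Combined with the reduction in step~1, this shows that the moment generating function of the prospective limit pair $(n(z),o(z))$ converges in a neighbourhood of $0$, so the limit law is determined by its moments, and joint moment convergence upgrades to convergence in law. For conformal invariance I would argue that under a conformal bijection $\phi:U\to V$ (with a compatible normalisation of the harmonic conjugate in the second variable) $\tilde g^U(u,v)=\tilde g^V(\phi(u),\phi(v))$, so that $F_\pm^U(u,v)\,du^{(\pm)}$ pulls back to $F_\pm^V(\phi(u),\phi(v))\,d(\phi(u))^{(\pm)}$ while $F_\pm$ transforms as an analytic function in the second variable; the multiple contour integrals defining $c_n(z,\zp)$ are then invariant under $\phi$, so the limit distribution of $(n(z),o(z))$ depends only on the conformal class of $(U,z,\zp)$.

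The main obstacle is the conformal-invariance step: one must carefully fix the normalisation of the harmonic conjugate of $g$ (defined only up to a real additive constant that could a priori depend on $u$) consistently across $U$ and $V$, and verify that $F_\pm\,du^{(\pm)}$ is indeed the correct conformally covariant $1$-form in $u$ while $dz_i^{(\sigma_i)}$ alternates correctly with $\sigma_i$; the remaining steps are essentially mechanical consequences of Theorem~\ref{thm:asymp:ne:oe} and the bound \eqref{eq:ck:bounded}.
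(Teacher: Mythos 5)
Your plan follows essentially the same route as the paper's proof: reduce joint moments of $(\nez,\oez)$ to the binomial moments controlled by Theorem~\ref{thm:asymp:ne:oe} using $\oez^2=\oez$ and the falling-factorial (Stirling-number) expansion of $\nez^m$, evaluate low-order Bell polynomials for the displayed examples, and conclude moment-determinacy and conformal invariance from \eqref{eq:ck:bounded} together with the conformal covariance of the integrands in $c_n(z,\zp)$. Your Stirling-number reduction is the same combinatorics that the paper packages as an induction on $n$, and your Bell-polynomial evaluations for $(n,\sigma)=(0,1)$ and $(1,0)$ reproduce the paper's first two limits.

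The one step you should tighten is the passage from ``the exponential generating function $\sum_n B_n(X_1,\dots,X_n)\,t^n/n!$ has positive radius of convergence'' to ``the limit law is determined by its moments.'' As written, the argument speaks of the moment generating function of ``the prospective limit pair'' before a limit has been shown to exist, and a bound $|B_n|/n!\le A\rho^{-n}$ does not by itself control the moment limits $m_n=\lim_\eps\Ee[\nez^n]$, because the Stirling coefficients in your change-of-basis themselves grow factorially. The paper closes exactly this gap directly: it uses the elementary inequality $\Ee\left[\binom{\frac{\nez-\oez}{2}}{n}\right]\ge\tfrac{1}{4^n n!}\left(\Ee[\nez^n]-(4n)^n\right)$ together with a crude estimate of $|B_{2n}|/(2n)!$ to obtain $m_n\le C^n n^n$, and then invokes Carleman's condition. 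Replacing your generating-function heuristic by this explicit moment bound makes the determinacy step rigorous; the remaining steps of your proposal, including the conformal-covariance discussion, are sound. (A small aside: carrying out your explicit recipe for $\mathrm{var}[\nez]$ gives a cancellation of the $c_2(z,\zp)^2$ contributions between $\Ee[\nez^2]$ and $\Ee[\nez]^2$, so the $-4c_2(z,\zp)^2$ term displayed in the corollary appears to be a misprint.)
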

We explain how Theorem \ref{thm:asymp:ne:oe} implies Corollary \ref{cor:joint:moments}.
\begin{proof}
    We first observe that since $\oez \in \{0,1\}$, $\oez^k = \oez$ for all $k \geq 1$, it is enough to compute $\Ee[\nez^k]$ and $\Ee[\nez^k\oez]$ for all $k \in \ZZ_{\geq 0}$. Theorem \ref{thm:asymp:ne:oe} applied with $n = 0, \sigma = 1$ gives
    \bestar
        \Ee[\oez] \overset{\eps \to 0}{\longrightarrow} -B_1(c_1(z,\zp)) = -c_1(z,\zp).
    \eestar
    Theorem \ref{thm:asymp:ne:oe} applied with $n=1,\sigma = 0$ gives
    \bestar
        \Ee[\nez-\oez] = -B_2(c_1(z,\zp), -2c_2(z,\zp)) +o_{\eps \to 0}(1) = -c_1(z,\zp)^2 +2c_2(z,\zp) + o_{\eps \to 0}(1).  
    \eestar
    By induction, it is possible to obtain all joints moments of $(\nez,\oez)$ in the same way. Assume that for some $n$, for all $0 \leq k \leq n$ and $\sigma \in \{0,1\}$, $\Ee[\nez^k\oez^\sigma]$ is a polynomial in the $c_i(z,\zp)$ for $1 \leq i \leq 2k+\sigma$. Developing the binomial coefficient, we can write
    \bestar
        \begin{aligned}
            \Ee\left[\binom{\frac{\nez-\oez}{2}}{n+1}\right]
            &= \frac{1}{2^{n+1}(n+1)!}\left(\Ee[\nez^{n+1}] + \sum_{\underset{\sigma \in \{0,1\}}{0 \leq k \leq n}} a_{k\sigma}\Ee[\nez^k\oez^{\sigma}]\right)
        \end{aligned}
    \eestar
    where the $a_{k\sigma}$ are explicit coefficients. By induction, the $\Ee[\nez^k\oez^{\sigma}]$ are explicit polynomials in the $(c_{k+\sigma}(z,\zp))_{1\leq 2k+\sigma \leq 2n+1}$ (up to $o_{\eps \to 0}(1)$, and by Theorem \ref{thm:asymp:ne:oe} applied at $n+1, \sigma = 0$, the left-hand side is an explicit polynomial in the $(c_{k+\sigma}(z,\zp))_{1\leq 2n+2}$ (up to $o_{\eps \to 0}(1)$), so $\Ee[\nez^{n+1}]$ is also an explicit polynomial in the $(c_{k+\sigma}(z,\zp))_{1\leq 2n+2}$ up to $o_{\eps \to 0}(1)$.\par
    Similarly, Theorem \ref{thm:asymp:ne:oe} with $n+1, \sigma = 1$ enables to express $\Ee[\nez^{n+1}\oez]$ in terms of the $\left(c_k(z,\zp)\right)_{1 \leq k \leq 2n+3}$, which concludes the induction step.
    
    \bigskip

    We obtained that the moments of $\nez$ converge: for all $n$, $\Ee[\nez^n] \overset{\eps \to 0}{\longrightarrow}m_n$. To deduce the convergence in law of the random variable $\nez$, we only have to check the conditions of the moment problem, that is find a bound on $m_n$. Bounding crudely the binomial coefficient from below, we find that
    \bestar
        \begin{aligned}
        \Ee\left[\binom{\frac{\nez-\oez}{2}}n\right] \geq \Ee\left[\binom{\frac{\nez-1}{2}}{n}\right] &\geq \Ee\left[\mathds{1}_{\nez \geq 2n}\frac{(\nez-2n)^n}{2^n n!}\right]\\
        &\geq \Ee\left[\mathds{1}_{\nez \geq 4n}\frac{\nez^n}{4^n n!}\right] \geq \frac{1}{4^n n!}(\Ee[\nez^n]-(4n)^n).
        \end{aligned}
    \eestar
    and taking the limit in Theorem \ref{thm:asymp:ne:oe} with $n, \sigma = 0$ gives
    \bestar
        m_n \leq (4n)^n + 4^n n! \frac{\left|B_{2n}\left(\left((-2)^{k-1}(k-1)!c_k(z,\zp)\right)_{1 \leq k \leq 2n}\right)\right|}{(2n)!}.
    \eestar
    Since the $c_k(z,\zp)$ grow at most exponentially by Equation \eqref{eq:ck:bounded}, using the explicit expression of the Bell polynomial, we obtain
    \bestar
        \begin{aligned}
            \frac{\left|B_{2n}\left(\left((-2)^{k-1}(k-1)!c_k(z,\zp)\right)_{1 \leq k \leq 2n}\right)\right|}{(2n)!} 
            &\leq \sum_{j_1 + 2j_2 + \dots + 2nj_{2n} = 2n} \prod_{i=1}^{2n} \frac{((2C)^k k!)^{j_k}}{(k!)^{j_k}(j_k)!}\\
            &= (2C)^{2n} \sum_{j_1 + 2j_2 + \dots + 2nj_{2n} = 2n} \frac{1}{(j_k)!}\\
            &\leq (2C)^{2n} \left|\left\{(j_1,\dots, j_{2n})~;~j_1 + 2j_2 + \dots + 2nj_{2n} = 2n\right\}\right|\\
            &\leq (2C)^{2n} \frac{(2n)^{2n}}{(2n)!},
        \end{aligned}
    \eestar
    since there are at most $n$ choices for $j_1$, $n/2$ choices for $j_2$ etc. Using Stierling's formula, we conclude that $m_n \leq C^n n^n$ for another absolute constant $C$, hence Carleman's condition is verified and $\nez$ converges towards a random variable $n(z)$ which is uniquely determined by its moments $m_n$. Since these moments are conformally invariant (they are expressed as integrals of conformal quantities), the law of $n(z)$ is also conformally invariant. 
\end{proof}

In the particular case $U^r=[-\infty, +\infty]\times[-\pi/2, \pi/2]$ where the Green function has a simple explicit expression, the computations turn out to be particularly simple.
\begin{remark}
    The infinite strip does not fill the technical hypothesis of our theorem since it is not a bounded open set, although it fills the other conditions: it is symmetric and has a horizontal boundary. Nonetheless, since the formula for the moments depends only on conformally invariant quantities, the moments can be computed in any simply connected open set, and the computations transferred to any other domain by a Riemann map. Besides, if we take $\Uer = \eps \Z^2 \cap ([-1/\eps, 1/\eps] \times [-\pi/2,\pi/2])$, the proof of Theorem \ref{thm:asymp:ne:oe} still holds, at least for the folded dimer model:  for the proof to extend to this folded case on $\Uer$, the only part of the proof that has to be modified is Lemma \ref{lem:Green:boundary} of the Appendix. The proof of this lemma still works because, to show that $K^{-1}(b,w)$ is close to the partial derivative of the Green function $g$, we only need that $g_{\RR^2}$ (the full-plane Green function) or $g_{\HH}$ (the half-plane Green function) and $g$ are $O(\eps)$ on the boundary of $\Ged$ to apply the Harnack lemma. This is also true here.
\end{remark}
\begin{example}[The infinite strip]\label{ex:strip}
    Let $U^r=[-\infty, +\infty]\times[-\pi/2, \pi/2]$. We give a particularly simple expression for $\Ee[\oez]$ and $\Ee[\nez]$. To apply Corollary \ref{cor:joint:moments}, we need to identify $F_-$ and $F_+$. The conformal map from this strip to the half plane is 
    \bestar
        \phi: U^r \to \HH, u \to \exp(u+i\pi/2) = i\exp(u).
    \eestar
    On the half plane, $\tilde{g}_{\HH}(u,v) = -\frac{1}{2\pi} \log \frac{u-v}{\bar{u}-v}$ with $\log$ denoting the principal value of the complex logarithm (see the footnote in Section 6.3 of \cite{Ken14}). Hence, on the strip
    \bestar
        \tilde{g}(u,v) = \tilde{g}_{\HH}(\phi(u),\phi(v)) = -\frac{1}{2\pi} \log \left(-\frac{e^u-e^v}{e^{\bar{u}}+e^v}\right).
    \eestar
    By definition, for $u,v \in U^r$,
    \bestar
        F_+(u,v) = -\frac{1}{2\pi} \frac{e^u}{e^u-e^v},\qquad F_-(u,v) = \frac{1}{2\pi} \frac{e^{\bar{u}}}{e^{\bar{u}}+e^v}.
    \eestar
    Let $z = x+iy$ (with $x \in (0, \pi/2)$), $\zp = x+i\pi/2$. For $\gamma$, we choose the vertical straight line from $z$ to $\zp$. We first compute the probability that there is an odd number of arcs enclosing $z$: for all $z \in U^r$, $F_-(\bar{z},z) = \frac{1}{4\pi}$ so 
    \bestar
        c_1(z,\zp) = -4\Im\left\{ \int_{x+iy}^{x+i\pi/2} \frac{1}{4\pi}dz\right\} = -\frac{1}{2}+\frac{y}{\pi}
    \eestar
    and by Corollary \ref{cor:joint:moments},
    \bestar
        \Ee[\oez]  \overset{\eps \to 0}{\longrightarrow} -c_1(z,\zp) = \frac{1}{2}-\frac{y}{\pi}.
    \eestar
    To obtain the expected number of arcs in the limit, we first need to compute
    \bestar
        c_2(z,\zp) = 4\Re\left\{\int\int_{\gamma}F_+(\overline{z_1},z_0)\overline{F_+(\overline{z_0},z_1)}dz_0\overline{dz_1} - \int\int_{\gamma}F_-(\overline{z_1},z_0)F_-(\overline{z_0},z_1)dz_0dz_1\right\}.
    \eestar
    For all $z_0 = x+iy_0$, $z_1 = x + iy_1$,
    \bestar
        \left\{
        \begin{array}{llll}
             F_-(\bar{z_1}, z_0) &= \frac{1}{2\pi}\frac{e^{z_1}}{e^{z_1}+e^{z_0}} &= \frac{1}{2\pi}\frac{e^{iy_1}}{e^{iy_1}+e^{iy_0}} &= \frac{1}{4\pi}\left(1+i\tan\left(\frac{y_1-y_0}{2}\right)\right)  \\
             F_+(\bar{z_1}, z_0) &= -\frac{1}{2\pi}\frac{e^{\bar{z_1}}}{e^{\bar{z_1}}-e^{z_0}} &= -\frac{1}{2\pi}\frac{e^{-iy_1}}{e^{-y_1}-e^{iy_0}} &= -\frac{1}{4\pi}\left(1+i\cot\left(\frac{y_1+y_0}{2}\right)\right).
        \end{array}
        \right.
    \eestar
    After some algebra, we get
    \bestar
        c_2(z,\zp) = -\frac{1}{\pi^2}\ln(\sin(y)),
    \eestar
    so Corollary \ref{cor:joint:moments} implies
    \be \label{eq:mean}
        \Ee[\nez] \overset{\eps \to 0}{\longrightarrow} -\frac{2}{\pi^2}\ln(\sin(y)) + \left(\frac{1}{4}-\frac{y^2}{\pi^2}\right).
    \ee
    \end{example}
    \begin{remark}
        This is coherent with what we expect of the continuous model: when $z$ approaches the real axis, the number of arcs enclosing $z$ explodes and it is odd or even with probability $1/2$. When $z$ approaches the boundary of the strip, the probability that there is at least one arc enclosing $z$ goes to $0$, and so does the probability that there is an odd number of arcs. Besides, the number of arcs above a point is always positive and explodes when this point approaches the real axis.
    \end{remark}

\section{Proof of the main theorem: the folded case}\label{sec:proof-main-thm}
In this section, we prove Theorem \ref{thm:asymp:ne:oe} when $\mu_{\eps} = \mu^1_{\eps}$, that is in the case of folded dimers. In the spirit of \cite{Ken14}, the proof consists of choosing a connection on the graph $\Ge$ and analysing its properties. 
\begin{proof}
    Let $\gamma$ be a path in $U$ from $z$ to $\zp$. The associated \emph{zipper} $E_{\eps}(\gamma)$ is the set of oriented edges of $\Ge$ crossing the path $\gamma$ from left to right. An example of a zipper as a specific polygonal path is drawn in Figure \ref{fig:zipper}.\par
    \begin{figure}
        \begin{overpic}[abs,unit=1mm,scale=0.5]{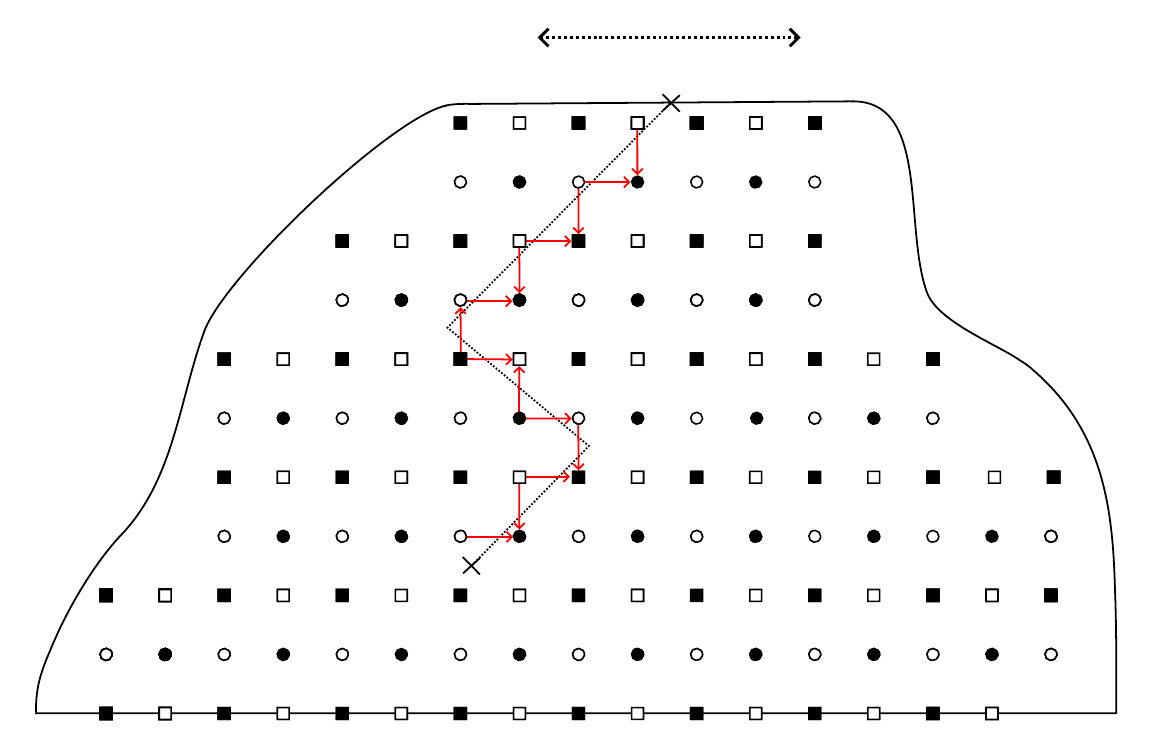}
            \put(25,52){$U$}
            \put(54,60){$\d$}
            \put(40.5,12.5){$z$}
            \put(55,56){$\zp$}
        \end{overpic}
        \caption{A path $\gamma$ from $z$ tp $\zp$ drawn as a dotted line. The associated zipper $E_{\eps}(\gamma)$ is drawn in red (arrows indicate directed edges).}
        \label{fig:zipper}
    \end{figure}
    Let $\eps >0$ be fixed. As in the preceding section, for every $\a >0$ we can define a connection on $\Ge$ associated to the zipper by setting, for any directed edge $e \in E_{\eps}(\gamma)$, $\phi_e = \begin{pmatrix} 1&\a \\ 0&1\end{pmatrix}$ and $\phi_{e^{-1}} = \phi_e^{-1}$, and $\phi_e = I_2$ on all other bulk edges. We also define $\psi_e = \begin{pmatrix} 1 \\ 1\end{pmatrix}$ for all edges $e$ linking a boundary point with a bulk point. Denote by $K_{\a}$ the Kasteleyn matrix associated with this connection and the usual Kasteleyn phases $\zeta$ of discrete holomorphy given by Equation \eqref{kast-eq-conplex-kast-weights}. We drop the dependency in $\eps$ of the Kasteleyn matrix to lighten the notation. The Kasteleyn matrix associated with the trivial connection is $K = K_0$. By Corollary \ref{kast:corr},
        \be\label{eq:kast:Ka}
            \pf K_{\a} = \sum_{\om\in\Om} 
            \prod_{\mathrm{loops} \ C}\tr(\phi_C)
            \prod_{\mathrm{arcs} \ A}\psi_{b_A}^\T\phi_A\psi_{w_A}.
        \ee
    From here, the proof follows three main steps:
    \begin{itemize}
        \item Show that the right-hand side of Equation \eqref{eq:kast:Ka} can be expressed as a power series in $\a$ involving the moments of $\nez$ and $\oez$.
        \item Show that the left hand side can be expanded as a power series in $\a$ with coefficients $\tr((K^{-1}S)^n)$ where $K^{-1}$ is a discrete holomorphy matrix (see Appendix \ref{appendix:A} for more details on the discrete holomorphy theory on the square lattice), and $S$ is a matrix encoding the zipper.
        \item Use the asymptotic expression of $K^{-1}$ in terms of the continuous Green function obtained by Kenyon in \cite{Ken00} (see Appendix \ref{appendix:A}) to show that $\tr((K^{-1}S)^n)$ converges to an $n$-fold holomorphic integral.   
    \end{itemize}
    \paragraph{\textbf{First step: the right-hand side of Equation \eqref{eq:kast:Ka}}}
    Let $\a >0$. Given $\om \in \Omega(\Ge)$, we denote by $c_{\eps}$ its total number of loops and $n_{\eps}$ its total number of arcs (which is constant and equal to the number of white boundary vertices). We can write:
        \bestar
            \pf K = \sum_{\om\in\Om} 2^{c_{\eps}}2^{n_{\eps}},
        \eestar
    which is the partition function of the loops and arcs model. Now let $\a \geq 0$. The possible monodromies for loops are $I_2$, $\begin{pmatrix} 1&\a \\ 0&1\end{pmatrix}$ and its inverse $\begin{pmatrix} 1&\ {-\a} \\ 0&1\end{pmatrix}$, which all have trace $2$. Hence each loop contributes a factor $2$ to the right-hand side of the Kasteleyn formula. For an arc $A$ with endpoints $w_A$ and $b_A$, we say that the arc is clockwise if $w_A$ is left of $b_A$. By definition of the zipper $E(\gamma)$, there are three possibilities for the monodromy along an arc. If $A$ does not enclose $z$, then $\phi_A = I_2$ and $\psi_{b_A}^\T\phi_A\psi_{w_A}= 2$. If $A$ is clockwise and encloses $z$, since $\phi_A$ is computed by orienting the edges towards $w_A$, $\phi_A = \begin{pmatrix} 1&\a \\ 0&1\end{pmatrix}$ and $\psi_{b_A}^\T\phi_A\psi_{w_A}= 2 + \a$. If $A$ is counterclockwise and encloses $z$, we get $\psi_{b_A}^\T\phi_A\psi_{w_A}= 2 - \a$.  Given $\om \in \Omega(\Ge)$, we denote by $r_{\eps}(z)$ (resp $l_{\eps}(z)$) its number of clockwise (resp counterclockwise) arcs enclosing $z$. These quantities are random variables whose laws depends on $\eps$. For fixed $\eps$, 
        \be\label{eq:pfaffian:developed}
            \begin{aligned}
                \pf K_{2\a} 
                &= \sum_{\om\in\Om} 2^{c_{\eps}}2^{n_{\eps} - r_{\eps}(z)-l_{\eps}(z)}(2+2\a)^{r_{\eps}(z)}(2-2\a)^{l_{\eps}(z)} \\
                &= \sum_{\om\in\Om} 2^{c_{\eps}+n_{\eps}}(1+\a)^{r_{\eps}(z)}(1-\a)^{l_{\eps}(z)}.\\
            \end{aligned}
        \ee
    Recall the definition of $\nez$ and $\oez$. We claim that 
    \bestar
        \forall \eps \geq 0,~
        \left\{
        \begin{array}{ll}
             \nez &=l_{\eps}(z) + r_{\eps}(z)\\
             \oez &=l_{\eps}(z) - r_{\eps}(z)
        \end{array}
        \right.
        .
    \eestar
    The first equality is clear: an arc is either oriented clockwise or counterclockwise. We explain why the second one holds. For $\om \in \Omega(\Ge)$, the arcs enclosing $z$ are alternating in orientation. To see this, assume that two arcs $A_1$ and $A_2$ both enclosing $z$ have the same orientation, say clockwise $w_{A_1} < w_{A_2} < b_{A_2} < b_{A_1}$, and that there is no other arc enclosing $z$ between them. This cannot happen because there is an odd number of boundary vertices between $w_{A_1}$ and $w_{A_2}$ so they cannot be matched together. For the same reason, since the leftmost corner on the boundary is black while the rightmost is white by definition of the Temperleyan domain, there is always $0$ or $1$ more counterclockwise than clockwise arc which encloses $z$. Hence, $\oez := l_{\eps}(z) - r_{\eps}(z) = \mathds{1}\{\nez \text{ is odd}\} \in \{0,1\}$, and 
    \bestar
        \pf K_{2\a} = \sum_{\om\in\Om} 2^{c_{\eps}+n_{\eps}} (1-\a^2)^{r_{\eps}(z)}(1-\a)^{\oez} = \sum_{\om\in\Om} 2^{c_{\eps}+n_{\eps}} (1-\a^2)^{r_{\eps}(z)}(1-\oez\a). 
    \eestar
    Dividing by $\pf K$, we get
    \be\label{eq:development:powers:a}
        \begin{aligned}
            \frac{\pf K_{2\a}}{\pf K} &= \Ee[(1-\a^2)^{r_{\eps}(z)}(1-\oez\a)] = \Ee\left[\sum_{k =0}^{\infty}(-1)^k\binom{r_{\eps}(z)}{k}\a^{2k}(1-\oez\a)\right]\\
            &= \sum_{k =0}^{\infty}(-1)^k\a^{2k}\Ee\left[\binom{r_{\eps}(z)}{k}\right] - \sum_{k =0}^{\infty}(-1)^k\a^{2k+1}\Ee\left[\binom{r_{\eps}(z)}{k}\oez\right]\\
            &= \sum_{k =0}^{\infty}(-1)^k\a^{2k}\Ee\left[\binom{\frac{\nez-\oez}{2}}{k}\right] - \sum_{k =0}^{\infty}(-1)^k\a^{2k+1}\Ee\left[\binom{\frac{\nez-\oez}{2}}{k}\oez\right].
        \end{aligned}
    \ee
    For every fixed $\eps >0$ and every $\a >0$ small enough, the sums converge because the number of (counterclockwise) arcs surrounding any given point is bounded. 

    \bigskip
    
    \paragraph{\textbf{Second step: the left-hand side of Equation \eqref{eq:kast:Ka}.}}
    To lighten the notation, we write $E_{\eps} := E_{\eps}(\gamma)$. By definition, the zipper is a set of directed edges: we also say that an undirected edge $wb \in E(\Ge)$ belongs to the zipper and we write $wb \in E_{\eps}$ if $(w,b) \in E_{\eps}$ or $(b,w) \in E_{\eps}$. We also say that a (white or black) vertex $x \in \Ge$ belongs to the zipper and we write $x \in E_{\eps}$ if there exists $y \in \Ge$ such that $xy \in E_{\eps}$. If $wb$ is an undirected edge in $\Ge$, we define its sign by $\sgn(E_{\eps})_{w,b} = \mathds{1}\{(w,b) \in E_{\eps}\} - \mathds{1}\{(b,w) \in E_{\eps}\}$. We write the vertices of $\Ged$ as $(x,i)$ with $x \in \Ge$, $i \in \{1,2\}$ where $i$ indicates whether $x$ is in the upper or lower half plane. We use the convention that $i=1$ for the vertices on the horizontal axis. We note that $K_{2\a}$ is skew-symmetric, $(K_{2\a})_{(w,i),(w',j)} = (K_{2\a})_{(b,i),(b',j)} = 0$ for any $(w,i),(w',j) \in W(\Ged)$ and $(b,i),(b',j) \in \Ged$, and that for two vertices $(w,i) \in W(\Ged),(b,j) \in B(\Ged)$, 
        \bestar
            \begin{aligned}
                (K_{2\a})_{(w,i),(b,j)} 
                &= \zeta_{w,b}\mathds{1}\{w \sim b\}(\mathds{1}\{i=j\} + 2\a \mathds{1}\{i=2, j=1\}\sgn(E_{\eps})_{w,b}) \\
                &= K_{(w,i),(b,j)} + 2\a S_{(w,i),(b,j)},
            \end{aligned}
        \eestar
    where 
    \bestar
    S_{(w,i),(b,j)} = \mathds{1}\{w \sim b,i=2, j=1\}\zeta_{w,b}\sgn(E_{\eps})_{w,b}
    \eestar
    is defined by the previous equation (and $S$ is also skew-symmetric, zero on $W(\Ged) \times W(\Ged)$ and on $B(\Ged) \times B(\Ged)$). The key observation is to interpret $K_{2\a} = K + 2\a S$ as a perturbation of the usual discrete holomorphy matrix $K$ if we “unfold" the graph $\Ged$ i.e. if we identify $(x,2) \in \Ged$ with $\bar{x} \in \eps \ZZ^2$. This is almost true, but when the graph is unfolded the vertical edges on the lower half plane have the opposite orientation, see Figure \ref{fig:Kasteleyn}. 
    \begin{figure}
    \centering        
        \begin{overpic}[abs,unit=1mm,scale=0.5]{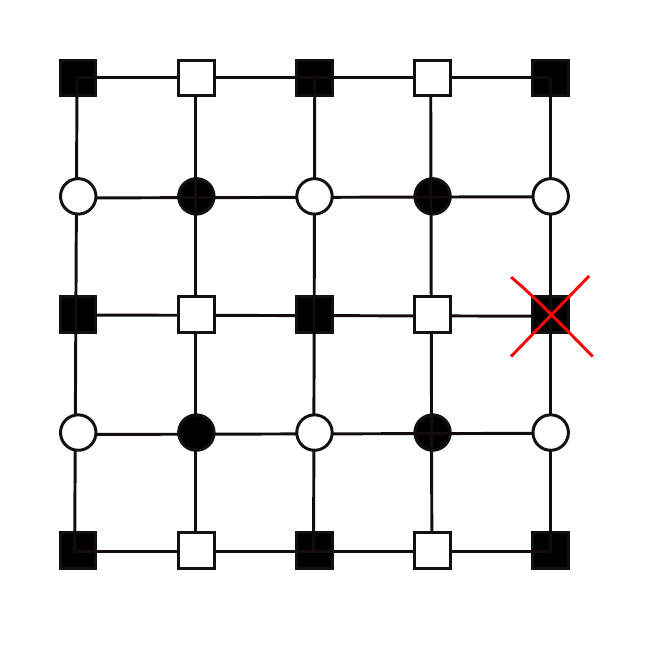}
            \put(9,5){-1}\put(21,5){1}\put(30,5){-1}\put(41,5){1}
            \put(11,15){1}\put(20,15){-1}\put(31,15){1}\put(40,15){-1}
            \put(10,25.5){-1}\put(21,25.5){1}\put(30,25.5){-1}\put(41,25.5){1}
            \put(11,35.5){1}\put(20,35.5){-1}\put(31,35.5){1}\put(40,35.5){-1}
            \put(10,45.5){-1}\put(21,45.5){1}\put(30,45.5){-1}\put(41,45.5){1}
            \put(5, 12){i}\put(13.5, 12){-i}\put(25, 12){i}\put(33.5, 12){-i}\put(45, 12){i}
            \put(3.5, 22){-i}\put(15, 22){i}\put(23.5, 22){-i}\put(35, 22){i}\put(43.5, 22){-i}
            \put(3.5, 32){-i}\put(15, 32){i}\put(23.5, 32){-i}\put(35, 32){i}\put(43.5, 32){-i}
            \put(5, 42){i}\put(13.5, 42){-i}\put(25, 42){i}\put(33.5, 42){-i}\put(45, 42){i}
            \put(-3,27.5){$\substack{(b,1)\\=(b,2)}$}
            \put(12,51){$(w,1)$}
            \put(12,3){$(w,2)$}
        \end{overpic}
        \begin{overpic}[abs,unit=1mm,scale=0.5]{Kasteleyn_orientation.pdf}
            \put(9,5){-1}\put(21,5){1}\put(30,5){-1}\put(41,5){1}
            \put(11,15){1}\put(20,15){-1}\put(31,15){1}\put(40,15){-1}
            \put(10,25.5){-1}\put(21,25.5){1}\put(30,25.5){-1}\put(41,25.5){1}
            \put(11,35.5){1}\put(20,35.5){-1}\put(31,35.5){1}\put(40,35.5){-1}
            \put(10,45.5){-1}\put(21,45.5){1}\put(30,45.5){-1}\put(41,45.5){1}
            \put(5, 22){i}\put(13.5, 22){-i}\put(25, 22){i}\put(33.5, 22){-i}\put(45, 22){i}
            \put(3.5, 12){-i}\put(15, 12){i}\put(23.5, 12){-i}\put(35, 12){i}\put(43.5, 12){-i}
            \put(3.5, 32){-i}\put(15, 32){i}\put(23.5, 32){-i}\put(35, 32){i}\put(43.5, 32){-i}
            \put(5, 42){i}\put(13.5, 42){-i}\put(25, 42){i}\put(33.5, 42){-i}\put(45, 42){i}
            \put(-3,27.5){$\substack{(b,1)\\=(b,2)}$}
            \put(12,51){$(w,1)$}
            \put(12,3){$(w,2)$}
        \end{overpic}
        \caption{The phases $\zeta$ associated to $K$ (on the left) and $\tilde{\zeta}$ associated to $\tK$ (on the right). The orientations differ only for vertical edges in the lower half part of the pictures. We marked a point $b$ on the reflection axis and a point $(w,1)$ and its reflection $(w,2)$ identified with the complex conjugate. The vertex crossed in red is $b_0$ removed by the Temperleyan boundary conditions.}
        \label{fig:Kasteleyn}
    \end{figure}
    This is fixed by performing a gauge equivalence (on the weights this time, and not on the Kasteleyn phases as in Remark \ref{kast:rmk:prop}): we multiply all columns and rows by 
    \be\label{eq:gauge:folded}
        \eps_{(x,i)} = 
        \left\{
        \begin{array}{ll}
             -1& \text{ if } i=2 \text{ and } x \in W_0 \cup B_0\\
             1& \text{ otherwise.}
        \end{array}
        \right.
    \ee
    Hence for all $(w,i), (b,j)$,
    \bestar
        \tilde{\zeta}_{(w,i),(b,j)} = \eps_{(w,i)}\eps_{(b,j)}\zeta_{w,b} \quad , \quad (\tilde{K}_{2\a})_{(w,i),(b,j)} = \eps_{(w,i)}\eps_{(b,j)}(K_{2\a})_{(w,i),(b,j)}.
    \eestar
    The matrix $\tilde{K}$ is the usual discrete holomorphy operator on the Temperleyan graph $\Ge^r$, see Figure \ref{fig:Kasteleyn}. In particular, it satisfies the asymptotic estimates of Appendix \ref{appendix:A}. This depends crucially on our definition of $\psi$ on the boundary. We have
    \be\label{eq:Ka=K+aS}
        (\tilde{K}_{2\a})_{(w,i),(b,j)} = \tilde{K}_{(w,i),(b,j)} + 2\a \tilde{S}_{(w,i),(b,j)}
    \ee
    with 
    \be\label{eq:def:tS}
    \tilde{S}_{(w,i),(b,j)} = \mathds{1}\{i=2, j=1, w \sim b\}\zeta_{w,b}\sgn(E_{\eps})_{w,b}(-1)^{w \in W_0}.
    \ee
    By multi-linearity of the determinant,
    \bestar
        \det(\tK_{\a}) = \bigg(\prod_{(x,i) \in \Ged} \eps(x,i)\bigg)^2 \det(\tK) = \det(\tK).
    \eestar
    Since the discrete holomorphy matrix $\tK$ is invertible (see Appendix \ref{appendix:A}), Equation \eqref{eq:Ka=K+aS} implies
    \be\label{eq:Pf^2}
        \left(\frac{\pf K_{2\a}}{\pf K}\right)^2 = \left(\frac{\pf \tK_{2\a}}{\pf \tK}\right)^2 
        = \det(I+2\a \tS\tK^{-1}).
    \ee    
    By using the identity (which holds for any finite matrix $M$ and $\a < \rho(M)^{-1}$ the inverse spectral radius)
    \bestar
        \log \big(\det (I+\a M)\big) = \sum_{k=1}^{\infty} \frac{(-1)^{k-1}}{k}\a^k\tr(M^k)
    \eestar
    and taking the logarithm and square root in Equation \eqref{eq:Pf^2} (using that the Pfaffians are positive for $\a < 1$ due to Equation \eqref{eq:pfaffian:developed}), we get that
    \be\label{eq:power-expansion:RHS}
         \forall \alpha \in (0, \rho(\tS\tK^{-1})^{-1}),~\frac{\pf K_{2\a}}{\pf K}
        = \exp\left(\frac{1}{2}\sum_{k=1}^{\infty} \frac{(-1)^{k-1}}{k}(2\a)^k\tr\big((\tS\tK^{-1})^k\big)\right).
    \ee
    Combining this with Equation \eqref{eq:development:powers:a}, we obtain for all $\alpha \in (0, \rho(\tS\tK^{-1})^{-1})$,
    \be\label{eq:second:step}
        \begin{aligned}
        &\sum_{k =0}^{\infty}(-1)^k\a^{2k}\Ee\left[\binom{\frac{\nez-\oez}{2}}{k}\right] - \sum_{k =0}^{\infty}(-1)^k\a^{2k+1}\Ee\left[\binom{\frac{\nez-\oez}{2}}{k}\oez\right]\\
        &\quad = \exp \left(\frac{1}{2}\sum_{k=1}^{\infty} \frac{(-1)^{k-1}}{k}(2\a)^k\tr\big((\tS\tK^{-1})^k\big)\right).
        \end{aligned}
    \ee
    This concludes the second step of the proof: we can see from this equation that the moments of $\nez$ and $\oez$ can be computed in terms of the $\tr\big((\tS\tK^{-1})^k\big)$ by algebraic manipulations (this will be detailed at the end of the proof).

    \bigskip
    
    \paragraph{\textbf{Third step: the asymptotic of the trace terms.}}
    The next step of the proof is to compute $\tr\big((\tS\tK^{-1})^n\big)$ for all $n$. Our asymptotic will imply in particular that the spectral radius $\rho(\tS\tK^{-1})$ is uniformly bounded. Let $n \geq 1$ be fixed. Using the convention that $(w_n,i_n) = (w_0,i_0)$, we can write
    \be\label{eq:product:to:develop}
        \tr\big((\tS\tK^{-1})^n\big) = 2\sum_{(w_0,i_0), \dots, (w_{n-1},i_{n-1})}\prod_{k=0}^{n-1} (\tS\tK^{-1})_{(w_k,i_k),(w_{k+1},i_{k+1})}
    \ee
    since $\tS\tK^{-1}$ is symmetric (explaining the factor $2$, coming from the black vertices) and zero from black to white and white to black. The factors in the product are computed as follows: for all $u, v \in W(\Ged)$, for all $i, j \in \{1,2\}$, using Equation \eqref{eq:def:tS},
    \bestar
        \begin{aligned}
            (\tS\tK^{-1})_{(u,i),(v,j)} &= \sum_{(b,k) \in B(\Ged)} \tS_{(u,i),(b,k)}\tK^{-1}_{(b,k),(v,j)}\\
            &= \mathds{1}\{i=2\}\sum_{b: \{ub\} \in E_{\eps}} \zeta_{u,b}\sgn(E_{\eps})_{u,b}(-1)^{\mathds{1}\{u \in W_0\}}\tK^{-1}_{(b,1),(v,j)}.
        \end{aligned}
    \eestar
    Developing the product in Equation \eqref{eq:product:to:develop} yields
    \be\label{eq:trace:n}
        \begin{aligned}
            \tr\big((\tS\tK^{-1})^n\big) &= 2 \sum_{w_0, \dots, w_{n-1}} \sum_{\underset{\{w_kb_k\} \in E_{\eps}}{b_0, \dots, b_{n-1}:}} \prod_{k = 0}^{n-1}\zeta_{w_k,b_k}\sgn(E_{\eps})_{w_k,b_k}(-1)^{\mathds{1}\{w_k \in W_0\}} \tK^{-1}_{(b_k,1),(w_{k+1},2)}\\
            &=2 \sum_{\underset{e_k = \{w_kb_k\}}{e_0,\dots,e_{n-1} \in E_{\eps}}} \prod_{k=0}^{n-1}\zeta_{w_k,b_k}\sgn(E_{\eps})_{w_k,b_k}(-1)^{\mathds{1}\{w_k \in W_0\}} \tK^{-1}_{(b_k,1),(w_{k+1},2)}.
        \end{aligned}
    \ee
    From here, the rest of the proof consists of replacing $\tK^{-1}$ by its asymptotic expression from Lemma \ref{lem:Green:boundary} and identifying an $n$-fold Riemann sum that converges towards an $n$-fold holomorphic integral. Since the asymptotic expression of $\tK^{-1}_{(b_k,1),(w_{k+1},2)}$ varies according to the type of $w_{k+1}$ ($W_0$ or $W_1$) and the type of $b_k$ ($B_0$ or $B_1$), we must split the computations into many pieces.\par
    We choose a specific path $\gamma$ in $U$ (except for its endpoint $\zp \in \partial U$) for which the computations are tractable. More precisely, we choose $\gamma$ from $z$ to $\zp$ polygonal with slope $\pm 1$ consisting of a finite number of line segments, and we denote by $E_{\eps} = E_{\eps}(\gamma)$ the associated zipper. For each line segment of $\gamma$ in one of the directions NE, NW, SW, SE, the corresponding portion of the zipper $E_{\eps}(\gamma)$ consists of a zig-zag path of edges, alternately horizontal and vertical. Starting from here, $C = C(U,\gamma) > 0$ is a positive constant depending only on $\gamma$ and the open set $U$ that is allowed to change from line to line. The number of edges in the zipper is $|E_{\eps}(\gamma)| = |\gamma|\eps^{-1} + O(1) \leq C\eps^{-1}$.\par
    The edges of the zipper can be grouped in $N_{\eps} = |\gamma|\eps^{-1}+O(1) \leq C\eps^{-1}$ packets of four edges (except the edges near the changes of direction of $\gamma$, of which there are $\leq C$). We have
    \bestar
        E_{\eps} = \bigg(\bigsqcup_{1 \leq k \leq N_{\eps}}E_{\eps}(k)\bigg) \sqcup F_{\eps},
    \eestar
    where for $1 \leq k \leq N_{\eps}$, the $k^{th}$ packet $E_{\eps}(k)$ is a zig-zag path between vertices $(w_k^0,b_k^0,w_k^1,b_k^1,w^{k+1}_0)$  with $w_k^0 \in W_0, w_k^1 \in W_1, b_k^0 \in B_0, b_k^1 \in B_1$ as in Figure \ref{fig:zipper:local}, and where $|F_{\eps}| \leq C$. 
    \begin{figure}
        \begin{overpic}[abs,unit=1mm,scale=1]{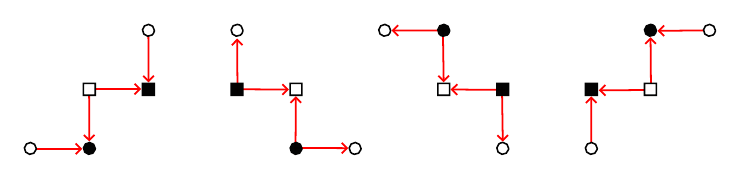}
            \put(7,30){$dz = \eps(1+i)$}
            \put(37,30){$dz = \eps(-1+i)$}
            \put(67,30){$dz = \eps(1-i)$}
            \put(100,30){$dz = \eps(-1-i)$}
            \put(8,5){$1$}
            \put(15,7){$-i$}
            \put(18,15){$1$}
            \put(24.5,18){$-i$}
            \put(35,18){$-i$}
            \put(42,15){$-1$}
            \put(44.5,7){$-i$}
            \put(52,5){$-1$}
            \put(69.5,25){$1$}
            \put(73,18){$i$}
            \put(79.5,15){$1$}
            \put(83,7){$i$}
            \put(101,7){$i$}
            \put(102,15){$-1$}
            \put(112,17){$i$}
            \put(112,25){$-1$}
            \put(2,-1){$w_k^0$}
            \put(15.5,23){$w_{k+1}^0$}
            \put(58,-1){$w_k^0$}
            \put(41.5,23){$w_{k+1}^0$}
            \put(81,-1){$w_{k+1}^0$}
            \put(58.5,23){$w_k^0$}
            \put(95,-1){$w_{k+1}^0$}
            \put(121.5,23){$w_k^0$}
        \end{overpic}
        \caption{The four possible directions for a zig-zag portion of the zipper $E_{\eps}(k)$. The Kasteleyn phases of the (undirected) edges are indicated in black.}
        \label{fig:zipper:local}
    \end{figure}
    More precisely,
    \bestar
        E_{\eps}(k) = \{w_k^0b_k^0, b_k^0w_k^1, w^k_1b_k^1, b_k^1w_{k+1}^0\}.
    \eestar
    If we write for all $0 \leq k \leq N_{\eps}$, $z_k = w_k^0$ and $dz_k =dx_k+idy_k \in \{\eps(\pm 1 \pm i)\}$ the displacement of the corresponding portion of the zipper (NE,NW,SE or SW), $b_k^0 = w_k^0 + (dx_k/2,0)$, $w_k^1 = b_k^0 +(0,dy_k/2)$, $b_k^1 = w_k^1 + (dx_k/2)$,$w_{k+1}^0 = b_k^1 + (0,dy_k/2)$. Observe that $z_{k+1} = w_{k+1}^0 = z_k^0 + dz_k$ except for at most $C(\gamma)$ values of $k$ (corresponding to the last $w_k^0$ before each change of direction of $\gamma$; recall that $\gamma$ is made of a finite number of straight segments) for which we still have $|z_{k+1}-z_k| \leq 2\sqrt{2}\eps$.
    \par
    We check the technical hypothesis of Lemma \ref{lem:Green:boundary}. Recall that by assumption, the boundary $\partial U$ is flat in a $\d$-neighbourhood of the endpoint $\zp$ of $\gamma$, and that except for its endpoint $\gamma$ is a path in the open set $U$. This implies that there exists $\d' = \d'(\gamma) \leq \d$ such that for all $w, b$ belonging to some edges of $E_{\eps}$, $|b-\overline{w}| \geq \delta'/2$ and both $b$ and $\overline{w}$ are either in $B(\zp, \delta'/2)$ or at distance at least $\delta'/2$ of the boundary of $U^r$.\par
    On the one hand, by Lemma \ref{lem:Green:boundary} and in particular the weaker consequence Equation \eqref{eq:K:bounded}, there exists $C = C(\gamma) >0$ such that 
    \be\label{eq:unif:control}
        \forall b,w \in E_{\eps},~ |\tK^{-1}_{(b,1),(w,2)}| \leq C\eps.
    \ee
    On the other hand, upon increasing $C$ we have $|F_{\eps}|\leq C$ and $|E_{\eps}| \leq C\eps^{-1}$, so we can neglect the edges of $F_{\eps}$ in Equation \eqref{eq:trace:n} and group the remaining edges by packets:
    \be\label{eq:trace-working-1}
        \begin{aligned}
            \tr\big((\tS\tK^{-1})^n\big) 
            &\approx 2\sum_{\underset{e_k = \{w_kb_k\}}{e_0,\dots,e_{n-1} \in E_{\eps} \setminus F_{\eps}}} \prod_{k=0}^{n-1}\zeta_{w_k,b_k}\sgn(E_{\eps})_{w_k,b_k}(-1)^{\mathds{1}\{w_k \in W_0\}} \tK^{-1}_{(b_k,1),(w_{k+1},2)}\\
            &\approx 2 \sum_{1 \leq k_1,\dots,k_n \leq N_{\eps}} \sum_{\underset{\forall 1\leq i \leq n}{e_i = \{w_ib_i\} \in E_{\eps}(k_i)}}\prod_{i=0}^{n-1}\zeta_{w_i,b_i}\sgn(E_{\eps})_{w_i,b_i}(-1)^{\mathds{1}\{w_i \in W_0\}} \tK^{-1}_{(b_i,1),(w_{i+1},2)},
        \end{aligned}
    \ee
    where the $\approx$ really means
    \be\label{eq:neglect}
        \begin{aligned}
        &\left|\tr\big((\tS\tK^{-1})^n\big) -  2 \sum_{1 \leq k_1,\dots,k_n \leq N_{\eps}} \sum_{\underset{\forall 1\leq i \leq n}{e_i = w_ib_i \in E_{\eps}(k_i)}}\prod_{i=0}^{n-1}\zeta_{w_i,b_i}\sgn(E_{\eps})_{w_i,b_i}(-1)^{\mathds{1}\{w_i \in W_0\}} \tK^{-1}_{(b_i,1),(w_{i+1},2)}\right|\\
        &\qquad \leq C^{2n}\eps.\\
        \end{aligned}
    \ee

From now on, we fix $1 \leq k_1,\cdots,k_n \leq N_{\eps}$ and take the corresponding term $s_{k_1,\cdots,k_n}$ in the sum on the right-hand side of Equation \eqref{eq:trace-working-1}
    \bestar
        s_{k_1,\dots,k_n} 
        =
        \sum_{\underset{\forall 1\leq i \leq n}{e_i = w_ib_i \in E_{\eps}(k_i)}}
        \prod_{i=0}^{n-1}
        \zeta_{w_i,b_i}\sgn(E_{\eps})_{w_i,b_i}(-1)^{\mathds{1}\{w_i \in W_0\}} 
        \tK^{-1}_{(b_i,1),(w_{i+1},2)},
    \eestar
and show that it is equal (up to an error of order $o(\eps)$) to 
    \bestar
        \tilde{s}_{k_1,\dots,k_n} 
        = 
        \sum_{\sigma\in \{\pm1\}^n}
        \prod_{i=0}^{n-1} i\sigma_i 
        F^{(\sigma_i)}_{-\sigma_i\sigma_{i+1}}(\overline{z_{k_{i+1}}},z_{k_i}) dz^{(\sigma_i)}_{k_i}.
    \eestar

Write $r_i=1$ if $w_i\in W_0$, and $r_i=-1$ if $w_i\in W_1$, and $s_i$ similar for $b_i$. Then we have, firstly: $(-1)^{\mathds{1}\{w_i \in W_0\}} = -r_i$; secondly $\tK^{-1}_{(b_i,1),(w_{i+1},2)} \leq C\eps$ by Equation \eqref{eq:unif:control}; thirdly Lemma \ref{lem:Green:boundary} gives us that there exists $\phi_{\d}(\eps) \overset{\eps \to 0}{\longrightarrow} 0$ such that
\bestar
    \left|\tK^{-1}_{(b_i,1),(w_{i+1},2)} 
        -
        \eps\tfrac{1}{2}\left[F_+ +r_{i+1}F_- +s_i\ol{F_-} +r_{i+1}s_i\ol{F_+}\right]\right|
        \le \eps \phi_{\d}(\eps)
\eestar
(where we omit the argument $(\ol{z_{k_i+1}},z_{k_i})$ for each of the functions $F_+, F_-, \ol{F_-}, \ol{F_+}$). Actually, we could take $\phi_{\d}(\eps) = C(\d)\eps$ due to Lemma \ref{lem:Green:boundary}, but we stay at a higher level of generality for further reference. Finally one can verify that 
    \bestar
        \zeta_{w_i,b_i}\sgn(E_{\eps})_{w_i,b_i}
        =
        \eps^{-1}(-i)[\tfrac{1}{2}(1-r_is_i)dx_{k_i} + \tfrac{1}{2}(1+r_is_i)idy_{k_i}],
    \eestar
indeed, use that $\tfrac{1}{2}(1-r_is_i)=\mathds{1}\{r_i\neq s_i\}$ and $\tfrac{1}{2}(1+r_is_i)=\mathds{1}\{r_i= s_i\}$, and the values of $\zeta_{w_i,b_i}\sgn(E_{\eps})_{w_i,b_i}$ in Figure \ref{fig:zipper:local}.
It follows that 
    \bestar
        \zeta_{w_i,b_i}\sgn(E_{\eps})_{w_i,b_i}(-1)^{\mathds{1}\{w_i \in W_0\}}
        =
        \eps^{-1}\tfrac{1}{2}i[r_idz_{k_i} - s_id\ol{z_{k_i}}],
    \eestar
and so we have, up to an error $4^nC^{n-1}\eps^n\phi_{\d}(\eps)$,
    \bestar
        s_{k_1,\dots,k_n} 
        =
        \sum_{r,s\in\{\pm1\}^n}
        \prod_{i=0}^{n-1}
        \tfrac{1}{4}i[r_i dz_{k_i} - s_i d\ol{z}_{k_i}]
        \left[F_+ +r_{i+1}F_- +s_i\ol{F_-} +r_{i+1}s_i\ol{F_+}\right].
    \eestar
We multiply out this product. Recall that $dz_{k_i}^{(1)}=dz_{k_i}$ and $dz_{k_i}^{(-1)}=d\ol{z_{k_i}}$, and similar for $F_+$ and $F_-$. The above is equal to
    \bestar
    \begin{split}
        &\sum_{r,s\in\{\pm1\}^n}
        \sum_{\kappa,\theta\in\{0,1\}^n}
        \sum_{\substack{\tau\in\{\pm1\}^n}}
        \prod_{i=0}^{n-1}
        \tfrac{1}{4}i\left[r_i^{1-\theta_i}(-s_i)^{\theta_i}dz_{k_i}^{((-1)^{\theta_i})}\right]
        \left[
            r_{i+1}^{\tfrac{1}{2}(1-\tau_i)+\kappa_i} s_i^{\kappa_i} F_{\tau_i}^{((-1)^{\kappa_i})}
        \right]\\
        &\hspace{1cm}=
        \sum_{\substack{r,s,\tau\in\{\pm1\}^n \\ \kappa,\theta\in\{0,1\}^n}}
        \prod_{i=0}^{n-1}
        \tfrac{1}{4}i
        (-1)^{\theta_i} r_i^{1-\theta_i} r_{i+1}^{\tfrac{1}{2}(1-\tau_i)+\kappa_i} s_i^{\theta_i+\kappa_i}
        F_{\tau_i}^{((-1)^{\kappa_i})} dz_{k_i}^{((-1)^{\kappa_i})}.
    \end{split}
    \eestar
If for some $i=0,\dots,n-1$, we have $\kappa_i+\theta_i=1$, then summing over $s_i=\pm1$ gives zero. So the only terms that remain have $\theta_i=\kappa_i$ for all $i=0,\dots,n-1$, and then the product is independent of $s$, so the above is
    \bestar
    \begin{split}
        &\sum_{\substack{r,\tau\in\{\pm1\}^n \\ \kappa\in\{0,1\}^n}}
        \prod_{i=0}^{n-1}
        \tfrac{1}{2}i
        (-1)^{\kappa_i} r_i^{1-\kappa_i} r_{i+1}^{\tfrac{1}{2}(1-\tau_i)+\kappa_i}
        F_{\tau_i}^{((-1)^{\kappa_i})} dz_{k_i}^{((-1)^{\kappa_i})}\\
        &\hspace{1cm}=
        \sum_{\substack{r,\tau\in\{\pm1\}^n \\ \kappa\in\{0,1\}^n}}
        \prod_{i=0}^{n-1}
        \tfrac{1}{2}i
        (-1)^{\kappa_i} r_i^{1-\kappa_{i+1}+\kappa_i + \tfrac{1}{2}(1-\tau_i)}
        F_{\tau_i}^{((-1)^{\kappa_i})} dz_{k_i}^{((-1)^{\kappa_i})},
    \end{split}
    \eestar
after reindexing $r_{i+1}=r_i$. Now similarly to the above, if for some $i=0,\dots,n-1$, we have that $1-\kappa_{i+1}+\kappa_i +\tfrac{1}{2}(1-\tau_i)$ is odd, then the sum over $r_i=\pm1$ gives zero. Hence the only terms that remain have, for all $i=0,\dots,n-1$, that $1-\kappa_{i+1}+\kappa_i +\tfrac{1}{2}(1-\tau_i)$ is even, which is the same as $\mathds{1}\{\kappa_i\neq\kappa_{i+1}\} = \mathds{1}\{\tau=1\}$, which is the same as $\tau_i=-(-1)^{\kappa_i}(-1)^{\kappa_{i+1}}$. The sum is then independent of $r$, and we obtain
    \bestar
        s_{k_1,\dots,k_n} 
        =
        \sum_{\substack{\kappa\in\{0,1\}^n}}
        \prod_{i=0}^{n-1}
        i
        (-1)^{\kappa_i} 
        dz_{k_i}^{((-1)^{\kappa_i})} F_{-(-1)^{\kappa_i}(-1)^{\kappa_{i+1}}}^{((-1)^{\kappa_i})}.
    \eestar
Finally, reparameterizing as $\sigma_i=(-1)^{\kappa_i}$ gives 
    \bestar
        \sum_{\sigma\in \{\pm1\}^n}
        \prod_{i=0}^{n-1} i\sigma_i 
        F^{(\sigma_i)}_{-\sigma_i\sigma_{i+1}}(\overline{z_{k_{i+1}}},z_{k_i}) dz^{(\sigma_i)}_{k_i},
    \eestar
as desired. Hence, Equation \eqref{eq:trace-working-1} becomes
    \be\label{eq:asymp:tr}
            \left|
            \tr((\tS\tK^{-1})^n) 
            - 
            2 \sum_{1 \leq k_1,\cdots,k_n \leq N_{\eps}} 
            \sum_{\sigma\in \{\pm1\}^n}
            \prod_{i=0}^{n-1} i\sigma_i 
            F^{(\sigma_i)}_{-\sigma_i\sigma_{i+1}}(\overline{z_{k_{i+1}}},z_{k_i}) dz^{(\sigma_i)}_{k_i}
            \right|\\
            \leq C^{n}\phi_{\d}(\eps).
    \ee
    for some new constant $C >0$. Recall that for all $1 \leq k \leq N_{\eps}$, $z_k = w_k^0$, $|dz_k| = \sqrt{2}\eps$ and $z_{k+1} = z_k + dz_k$ (except for at most $C$ values of $k$ for which we still have $|z_{k+1}-z_k|\leq 2\sqrt{2}\eps$). We obtain the sum of $2^n$ (one for each $\sigma \in \{\pm1\}^n$) $n$-fold Riemann sums approximating $n$-fold integrals along the path $\gamma$ of an analytic function when $\eps \to 0$. For each of these terms, the error can be bounded in terms of bounds on the derivatives of $F_+$ and $F_-$ around $\gamma$ (by Remark \ref{rem:Schwarz}, these bounds also work near $\zp$): we can find a constant $C$ depending only on $F_+, F_-$ and their derivatives (i.e. depending only on $U$ and $\gamma$) such that
    \bestar
        \begin{aligned}
        \left|2 \sum_{1 \leq k_1,\cdots,k_n \leq N_{\eps}} s_{k_1,\cdots,k_n} - c_n(z,\zp)\right| \leq 2^nC^n \eps.
        \end{aligned}
    \eestar
    Upon increasing $C$, the triangle inequality gives
    \be\label{eq:compare:coef}
        \left|\tr((\tS\tK^{-1})^n) - c_n(z,\zp)\right| \leq C^n \phi_{\d}(\eps).
    \ee
    for some new constant $C >0$. We are ready to conclude: since the $c_k(z,\zp)$ grow at most exponentially in $k$ (see Equation \eqref{eq:ck:bounded}), the power series 
    \bestar
        S_{\eps}(\a) = \frac{1}{2}\sum_{k=1}^{\infty} \frac{(-1)^{k-1}}{k}(2\a)^k \tr((\tS\tK^{-1})^k) 
    \eestar
    has radius of convergence at least $C^{-1}$ for some absolute constant,
    and Equation \eqref{eq:compare:coef} implies that for all $k$,
    \be\label{eq:coefs}
        \tr((\tS\tK^{-1})^k) = c_k(z,\zp) + o(\eps).
    \ee
    Since this power series has no constant coefficient, its exponential $\exp(S_{\eps}(\a))$ is also a power series with radius of convergence at least $C^{-1}$. The complete Bell polynomials enable us to express the coefficients of the exponential of a power series: for all $k \geq 0$, for all power series with no constant term,
\bestar
    \exp\left(\sum_{k=1}^{\infty}a_k \frac{X^k}{k!}\right) = \sum_{n=0}^{\infty}B_n(a_1,\dots,a_n)\frac{X^n}{n!}.
\eestar
    Using Equation \eqref{eq:coefs}, we obtain that the $n$-th coefficient of the power series $\exp(S_{\eps}(\a))$ is
    \bestar
        \begin{aligned}
        [\exp(S_{\eps}(\a))](n) &=
         \frac{1}{n!}B_n\left(\left(\frac{(-2)^{k-1}k!}{k}\tr((\tS\tK^{-1})^k)\right)_{1 \leq k \leq n}\right)\\
        &= \frac{1}{n!}B_n\left(\left((-2)^{k-1}(k-1)!c_k(z,\zp)\right)_{1 \leq k \leq n}\right) + o_{\eps \to 0}(1).
        \end{aligned}
    \eestar
    This concludes by uniqueness of the coefficients in Equation \eqref{eq:second:step}.
    \end{proof}


\section{Proof of the main theorem: the shifted case}\label{sec:shifted}
In this section, we prove Theorem \ref{thm:asymp:ne:oe} when $\mu_{\eps} = \mu^2_{\eps}$, that is in the shifted case. The proof is very similar in spirit to the proof of the last section, but we have to use the estimates of the inverse Kasteleyn matrix in piecewise Temperleyan domains obtained by Russkikh in \cite{Russ} (see Lemma \ref{lem:russ:boundary} of the Appendix) instead of those of Lemma \ref{lem:Green:boundary} which only work for Temperleyan domains. We follow the same steps and indicate the main changes.

\begin{proof}
As in the preceding section, let $\gamma$ be a path in $U$ from $z$ to $\zp$ and $E_{\eps} = E_{\eps}(\gamma)$ be the associated zipper, see Figure \ref{fig:zipper}. Let $\eps >0$ be fixed. For every $\a \geq 0$, we define a connection on $\Ge$ by setting for any directed edge $e \in E_{\eps}(\gamma)$, $\phi_e = \begin{pmatrix}
    1+\a/2 & -\a/2 \\ \a/2 & 1-\a/2\end{pmatrix}$, $\phi_{e^{-1}} = \phi_{e}^{-1}$, and $\phi_e = I_2$ on all other bulk edges. We also define $\psi_e = \begin{pmatrix} 1 \\ 0\end{pmatrix}$ for all edges $e$ linking a boundary point with a bulk point. We denote by $K_{\alpha}$ the Kasteleyn matrix associated with this connection and the Kasteleyn phases of discrete holomorphy given by Equation \eqref{kast-eq-conplex-kast-weights}, and by $K = K_0$. Corollary \ref{kast:corr} writes. 
    \be\label{eq:kast:Ka:shifted}
            \pf K_{\a} = \sum_{\om\in\Om} 
            \prod_{\mathrm{loops} \ C}\tr(\phi_C)
            \prod_{\mathrm{arcs} \ A}\psi_{b_A}^\T\phi_A\psi_{w_A}.
    \ee
    The proof now follows the same three steps as the preceding section. 
    
    \bigskip
    
    \paragraph{\textbf{First step: the right-hand side of Equation \eqref{eq:kast:Ka:shifted}.}} Loops get a weight $2$, arcs enclosing $z$ get a weight $1\pm \a/2$ according to their orientation, so we obtain, using the same notations as in the preceding proof,
\bestar
    \pf K_{2\alpha} = \sum_{\om \in \Om}2^{c_{\eps}}(1-\a^2)^{r_{\eps}(z)}(1-o_{\eps}(z)); \quad  \quad \pf K = \sum_{\om \in \Om}2^{c_{\eps}}
\eestar
and dividing by $\pf K$ we obtain 
\be\label{eq:development:powers:a:bis}
    \frac{\pf K_{2\a}}{\pf K}
            = \sum_{k =0}^{\infty}(-1)^k\a^{2k}\Ee\left[\binom{\frac{\nez-\oez}{2}}{k}\right] - \sum_{k =0}^{\infty}(-1)^k\a^{2k+1}\Ee\left[\binom{\frac{\nez-\oez}{2}}{k}\oez\right]
    \ee
which is exactly Equation \eqref{eq:development:powers:a}, and concludes the first step of the proof.

\bigskip

\paragraph{\textbf{Second step: the left-hand side of Equation \eqref{eq:kast:Ka:shifted}.}} We use the same notations as in the preceding proof: $E_{\eps}$ for the zipper, $\sgn(E_{\eps})_{w,b}$ for the direction of an edge of the zipper. For $i \in \{1,2\}$, we write the vertices of $\Ge^i$ as $(x,i)$ with $x \in \Ge$ if $i=1$, $x \in \Ge \setminus (\RR \times \{0\})$ if $i=2$. Contrary to the last section, the matrix $K$ is a \emph{block matrix} since $\psi = \begin{pmatrix}1\\0\end{pmatrix}$: $K_{(w,i),(b,j)} = 0$ as soon as $i \neq j$. On the $i=j=1$ block, it is the Kasteleyn matrix on the Temperleyan domain $\Ge^1$: it has Dirichlet boundary conditions everywhere. We write $K_{(w,1),(b,1)} = K^1_{w,b}$. On the $i=j=2$ block, it is the Kasteleyn matrix on the \emph{piecewise} Temperleyan domain $\Ge^2$: it has Dirichlet boundary conditions everywhere, except on the horizontal axis where it has Neumann boundary conditions (see \cite{Russ} for a general discussion of mixed boundary conditions for Kasteleyn matrices). We write $K_{(w,2),(b,2)} = K^2_{w,b}$. The asymptotic of $(K^1)^{-1}$ and $(K^2)^{-1}$ when $\eps \to 0$ were obtained in \cite{Russ} and are recalled in Lemma \ref{lem:russ:boundary}. Note that contrary to the last section, the vertical orientation is correct for $K^1$ and $K^2$, hence we do not need any gauge change. We can write
\bestar
    (K_{2\a})_{(w,i),(b,j)} = K_{(w,i),(b,j)} + \a S_{(w,i),(b,j)}
\eestar
with 
\be\label{eq:def:S:shifted}
    S_{(w,i),(b,j)} = \mathds{1}\{w \sim b\}\zeta_{w,b}\sgn(E_{\eps})_{w,b}(-1)^{i-1}.
\ee
In other words, $K_{2\a} = K + \a S$ and as in the preceding section Equation \eqref{eq:development:powers:a:bis} becomes, for all $\a \in (0, \rho(SK^{-1})^{-1})$,
\be\label{eq:second:step:shifted}
        \begin{aligned}
        &\sum_{k =0}^{\infty}(-1)^k\a^{2k}\Ee\left[\binom{\frac{\nez-\oez}{2}}{k}\right] - \sum_{k =0}^{\infty}(-1)^k\a^{2k+1}\Ee\left[\binom{\frac{\nez-\oez}{2}}{k}\oez\right]\\
        &\quad = \exp \left(\frac{1}{2}\sum_{k=1}^{\infty} \frac{(-1)^{k-1}}{k}\a^k\tr\big((SK^{-1})^k\big)\right),
        \end{aligned}
    \ee
    which concludes the second step.

    \bigskip

    \paragraph{\textbf{Third step: the asymptotic of the trace term.}} We only need to prove, as in the preceding section (up to a factor $2^n$) that
    \be\label{eq:asymp:tr}
            \left|
            \tr((SK^{-1})^n) 
            - 
            2^{n+1} \sum_{1 \leq k_1,\cdots,k_n \leq N_{\eps}} 
            \sum_{\sigma\in \{\pm1\}^n}
            \prod_{i=0}^{n-1} i\sigma_i 
            F^{(\sigma_i)}_{-\sigma_i\sigma_{i+1}}(\overline{z_{k_{i+1}}},z_{k_i}) dz^{(\sigma_i)}_{k_i}
            \right|\\
            \leq C^{n}\phi_{\d}(\eps).
    \ee
    for some $C = C(\gamma) >0$, $\phi_{\d}(\eps) \overset{\eps \to 0}{\longrightarrow}$. Indeed, if this holds, the proof of the last section carries through. We prove it using the same general strategy as in the third step of the last section. Let $n \geq 1$ be fixed. Using the convention that $(w_n,j_n) = (w_0,j_0)$, we can write
    \be\label{eq:product:to:develop}
        \tr\big((SK^{-1})^n\big) = 2\sum_{(w_0,j_0), \dots, (w_{n-1},j_{n-1})}\prod_{k=0}^{n-1} (SK^{-1})_{(w_k,j_k),(w_{k+1},j_{k+1})}.
    \ee
    The factors in the product are computed as follows: for all $u, v \in W(\Ged)$, for all $i, j \in \{1,2\}$, using Equation \eqref{eq:def:S:shifted}, and the fact that $K^{-1}_{(w,i),(b,j)}$ vanishes when $i \neq j$, 
    \bestar
            (SK^{-1})_{(u,i),(v,j)} = \sum_{(b,k) \in B(\Ged)} S_{(u,i),(b,k)}K^{-1}_{(b,k),(v,j)} = (-1)^{i-1}\sum_{b: \{ub\} \in E_{\eps}} \zeta_{u,b}\sgn(E_{\eps})_{u,b}(K^j)^{-1}_{b,v}.
    \eestar
    Developing the product in Equation \eqref{eq:product:to:develop} yields
    \be\label{eq:sum:explodes}
            \tr\big((SK^{-1})^n\big) 
            =2 \sum_{j_0,\dots,j_{n-1} \in \{1,2\}}\sum_{\underset{e_k = \{w_kb_k\}}{e_0,\dots,e_{n-1} \in E_{\eps}}} \prod_{k=0}^{n-1}\zeta_{w_k,b_k}\sgn(E_{\eps})_{w_k,b_k}(-1)^{j_k} (K^{j_{k+1}})^{-1}_{b_k,w_{k+1}}
    \ee
    (note that we also reindexed $j_{k+1} = j_k$). We choose the same path $\gamma$ in $U$ as in the preceding section and we group the edges by packets as before, see Figure \ref{fig:zipper:local} and the corresponding paragraph. We use the same notation: 
    \bestar
        E_{\eps} = \bigg(\bigsqcup_{1 \leq k \leq N_{\eps}}E_{\eps}(k)\bigg) \sqcup F_{\eps}.
    \eestar
    Here we need to be more careful than in the last section when neglecting the edges in $F_{\eps}$, since the terms are no more $O(\eps)$ when $w_{k+1}$ and $b_k$ are getting close. By Lemma \ref{lem:russ:boundary} and in particular the weaker consequence Equation \eqref{eq:bound:K:russ}, 
    \bestar
        |(K^{j_{k+1}})^{-1}_{b_k,w_{k+1}}-\Delta(w_{k+1},b_k)| \leq C\eps
    \eestar
    for some constant $C = C(\d)$. In Equation \eqref{eq:sum:explodes}, the error $\Delta(w_{k+1},b_k)$ cancels between the terms $j_k = \pm1$, so we can rewrite it as
    \bestar
        2 \sum_{j_0,\dots,j_{n-1} \in \{1,2\}}\sum_{\underset{e_k = \{w_kb_k\}}{e_0,\dots,e_{n-1} \in E_{\eps}}} \prod_{k=0}^{n-1}\zeta_{w_k,b_k}\sgn(E_{\eps})_{w_k,b_k}(-1)^{j_k} \left((K^{j_{k+1}})^{-1}_{b_k,w_{k+1}}-\Delta(w_{k+1},b_k)\right).
    \eestar
    Now, we can neglect the edges in $F_{\eps}$: since $|F_{\eps}|\leq C$ (upon increasing $C$), it holds that up to $C^n\eps$ (upon increasing $C$ again),
    \be\label{eq:trace:shifted}
        \begin{aligned}
            \tr\big((SK^{-1})^n\big) 
            =2 \sum_{1 \leq k_1,\dots,k_n \leq N_{\eps}} \sum_{\underset{\forall 1\leq i \leq n}{j_i \in \{1,2\}}}\sum_{\underset{\forall 1\leq i \leq n}{e_i = w_ib_i \in E_{\eps}(k_i)}}&\prod_{i=0}^{n-1}\bigg\{\zeta_{w_i,b_i}\sgn(E_{\eps})_{w_i,b_i}(-1)^{j_i-1}\\
            &\times\left((K^{j_{i+1}})^{-1}_{b_i,w_{i+1}}-\Delta(w_{i+1},b_i)\right)\bigg\}.
        \end{aligned}
    \ee
    From now on, we fix $1 \leq k_1,\cdots,k_n \leq N_{\eps}$ and take the corresponding term $s_{k_1,\cdots,k_n}$ in the sum on the right-hand side of Equation \eqref{eq:trace:shifted}:
    \bestar
        \begin{aligned}
        s_{k_1,\dots,k_n} 
        =
        \sum_{j_i \in \{1,2\}}\sum_{e_i = w_ib_i \in E_{\eps}(k_i)}&\prod_{i=0}^{n-1}\zeta_{w_i,b_i}\sgn(E_{\eps})_{w_i,b_i}(-1)^{j_{i}-1}
        \left((K^{j_{i+1}})^{-1}_{b_i,w_{i+1}}-\Delta(w_{i+1},b_i)\right).
        \end{aligned}
    \eestar
Recall that $w_i, b_i$ are within distance $2\sqrt{2}\eps$ of $z_{k_i}$. We show that (up to an error of order $o(\eps)$) 
    \bestar
        s_{k_1,\dots,k_n}  = 2^n\sum_{\sigma\in \{\pm1\}^n}
        \prod_{i=0}^{n-1} i\sigma_i 
        F^{(\sigma_i)}_{-\sigma_i\sigma_{i+1}}(\overline{z_{k_{i+1}}},z_{k_i}) dz^{(\sigma_i)}_{k_i}.
    \eestar
    Note that in this expression, $F_{\pm}$ are the functions defined in Equation \eqref{eq:def:Fpm} and appearing in Lemma \ref{lem:Green:boundary}, and $F_{\pm}^{(\sigma)}$ for $\sigma = \pm1$ denotes complex conjugation (in particular $F_{\pm}^{(\sigma)}$ is \emph{not} the $F_{\pm}^j,~j \in \{1,2\}$ appearing in Lemma \ref{lem:russ:boundary}, though they are related as we will see). 
    
    Let $r_i, s_i \in \{\pm 1\}$ be such that $w_i \in W_{\frac{1-r_i}{2}}, b_i \in B_{\frac{1-s_i}{2}}$. By Lemma \ref{lem:russ:boundary}, 
    \bestar
        \begin{aligned}
        (K^{j_{i+1}})^{-1}_{b_i,w_{i+1}}- \Delta(w_{i+1},b_i) 
        &=
        \tfrac{\eps}{2}\left[F_+^{j_{i+1}}(z_{k_{i+1}},z_{k_i}) +r_{i+1}F_-^{j_{i+1}}(z_{k_{i+1}},z_{k_i})\right.\\
        & \left.+s_i\ol{F_-^{j_{i+1}}(z_{k_{i+1}},z_{k_i})} +r_{i+1}s_i\ol{F_+^{j_{i+1}}(z_{k_{i+1}},z_{k_i})}\right] + \eps\phi_{\delta}(\eps).
        \end{aligned}
    \eestar
    where $\phi_{\delta}(\eps) \overset{\eps \to 0}{\longrightarrow} 0$. We will omit the argument $(z_{k_{i+1}},z_{k_i})$ and the dependence in $j_{i+1}$ for each of the functions $F_+, F_-, \ol{F_-}, \ol{F_+}$. Recall that this implies (see Equation \eqref{eq:bound:K:russ}) that 
    \bestar
    |(K^{j_{i+1}})^{-1}_{b_i,w_{i+1}}- \Delta(w_{i+1},b_i)| \leq C(\delta)\eps.
    \eestar
    Moreover, one can verify that 
    \bestar
        \zeta_{w_i,b_i}\sgn(E_{\eps})_{w_i,b_i}
        =-r_i
        \tfrac{\eps^{-1}}{2}i[r_idz_{k_i} - s_id\ol{z_{k_i}}],
    \eestar
and so we have, up to an error $C^n\eps\phi_{\delta}(\eps)$ (upon increasing $C$),
    \bestar
        s_{k_1,\dots,k_n} 
        = 
        \sum_{r,s\in\{\pm1\}^n, j_i \in \{1,2\}}
        \prod_{i=0}^{n-1}
        \tfrac{r_i(-1)^{j_i}}{4}i[r_i dz_{k_i} - s_i d\ol{z}_{k_i}]
        \left[F_+ +r_{i+1}F_- +s_i\ol{F_-} +r_{i+1}s_i\ol{F_+}\right].
    \eestar
We multiply out this product. Recall that $dz_{k_i}^{(1)}=dz_{k_i}$ and $dz_{k_i}^{(-1)}=d\ol{z_{k_i}}$, and similar for $F_+$ and $F_-$. The above is equal to
    \bestar
        \sum_{j_i \in \{1,2\}}\sum_{\substack{r,s,\tau\in\{\pm1\}^n \\ \kappa,\theta\in\{0,1\}^n}}
        \prod_{i=0}^{n-1}
        \tfrac{(-1)^{j_i}r_i}{4}i
        (-1)^{\theta_i} r_i^{1-\theta_i} r_{i+1}^{\tfrac{1}{2}(1-\tau_i)+\kappa_i} s_i^{\theta_i+\kappa_i}
        F_{\tau_i}^{((-1)^{\kappa_i})} dz_{k_i}^{((-1)^{\theta_i})}.
    \eestar
If for some $i=0,\dots,n-1$, we have $\kappa_i+\theta_i=1$, then summing over $s_i=\pm1$ gives zero. So the only terms that remain have $\theta_i=\kappa_i$ for all $i=0,\dots,n-1$, and then the product is independent of $s$, so the above is
    \bestar
        \sum_{j_i \in \{1,2\}}\sum_{\substack{r,\tau\in\{\pm1\}^n \\ \kappa\in\{0,1\}^n}}
        \prod_{i=0}^{n-1}
        \tfrac{(-1)^{j_i}r_i}{2}i
        (-1)^{\kappa_i} r_i^{1-\kappa_{i+1}+\kappa_i + \tfrac{1}{2}(1-\tau_i)}
        F_{\tau_i}^{((-1)^{\kappa_i})} dz_{k_i}^{((-1)^{\kappa_i})},
    \eestar
after reindexing $r_{i+1}=r_i$. Now similarly to the above, if for some $i=0,\dots,n-1$, we have that $1-\kappa_{i+1}+\kappa_i +\tfrac{1}{2}(1-\tau_i)$ is \emph{even} (contrary to the last section), then the sum over $r_i=\pm1$ gives zero. Hence the only terms that remain have for all $i=0,\dots,n-1$ that $1-\kappa_{i+1}+\kappa_i +\tfrac{1}{2}(1-\tau_i)$ is odd, which is the same as $\mathds{1}\{\kappa_i\neq\kappa_{i+1}\} \neq \mathds{1}\{\tau=1\}$, which is the same as $\tau_i=(-1)^{\kappa_i}(-1)^{\kappa_{i+1}}$. The sum is then independent of $r$, and we obtain
    \bestar
        s_{k_1,\dots,k_n} 
        =
        \sum_{j_i \in \{1,2\}}\sum_{\substack{\kappa\in\{0,1\}^n}}
        \prod_{i=0}^{n-1}(-1)^{j_i}
        i
        (-1)^{\kappa_i} 
        dz_{k_i}^{((-1)^{\kappa_i})} F_{(-1)^{\kappa_i}(-1)^{\kappa_{i+1}}}^{((-1)^{\kappa_i})}.
    \eestar
Finally, reparameterizing as $\sigma_i=(-1)^{\kappa_i}$ gives 
    \bestar
        \sum_{j_i \in \{1,2\}}\sum_{\sigma\in \{\pm1\}^n}
        \prod_{i=0}^{n-1}(-1)^{j_i} i\sigma_i 
        F^{(\sigma_i)}_{\sigma_i\sigma_{i+1}}dz^{(\sigma_i)}_{k_i},
    \eestar
The last step of the argument is specific to the shifted case. It is time to recall the dependence of the $i$-th term in $j_{i+1},z_{k_i},\ol{z_{k_{i+1}}}$ which was hidden in the notation for a while. For $j \in \{1,2\}$, $\sigma \in \{\pm1\}$, $(F_\pm^j)^{(\sigma)}$ is $(F_\pm^1)^{(\sigma)}$ if $j=1$, $(F_\pm^2)^{(\sigma)}$ if $j=2$ (recall that the exponent $(\sigma)$ denotes complex conjugation). Using the explicit expression for $F_\pm^1$ and $F_\pm^2$ (defined on $U$) in terms of $F_{\pm}$ (defined on $U^r$), see Equation \eqref{eq:F^j} of Appendix \ref{app:B}, $s_{k_1,\dots,k_n}$ can be written as
\bestar
    \sum_{j_i \in \{1,2\},\sigma\in \{\pm1\}^n}
        \prod_{i=0}^{n-1}(-1)^{j_i} i\sigma_i 
        \left[F^{(\sigma_i)}_{\sigma_i\sigma_{i+1}}(z_1,z_2)+(-1)^{j_{i+1}}F^{(\sigma_i)}_{-\sigma_i\sigma_{i+1}}(z_1,\ol{z_2})\right]dz^{(\sigma_i)}_{k_i}.
\eestar
Developing the product yields
\bestar
    \sum_{j_i \in \{1,2\},\sigma\in \{\pm1\}^n, \kappa \in \{\pm 1\}^n}
        \prod_{i=0}^{n-1} (-1)^{j_i}i\sigma_i 
        (-1)^{\frac{\kappa_i-1}{2} j_{i+1}}F^{(\sigma_i)}_{\kappa_i\sigma_i\sigma_{i+1}}(z_1,z_2^{(\kappa_i)})dz^{(\sigma_i)}_{k_i}.
\eestar
If for some $i=0,\dots, n-1$ we have $\kappa_i=1$, then summing over $j_{i+1} = \pm 1$ gives zero, so the only terms remaining have $\kappa_i = -1$ for all $i$ and then the product is independent of $\kappa$, so the above is 
\bestar
    \sum_{j_i \in \{1,2\},\sigma\in \{\pm1\}^n}
        \prod_{i=0}^{n-1} i\sigma_i 
        F^{(\sigma_i)}_{-\sigma_i\sigma_{i+1}}(z_1,\ol{z_2})dz^{(\sigma_i)}_{k_i}
        = 2^n\sum_{\sigma \in \{\pm1\}^n}
        \prod_{i=0}^{n-1} i\sigma_i 
        F^{(\sigma_i)}_{-\sigma_i\sigma_{i+1}}(z_1,\ol{z_2})dz^{(\sigma_i)}_{k_i}
\eestar
which concludes the proof.
\end{proof}


\section{The example of a rectangle: computing the number of arcs crossing in the vertical direction} \label{sec:rectangle}

In this section, we consider a specific example of a domain - a rectangle, with $\delta$ given by two opposing sides. This is strongly inspired by section 9 of \cite{Ken14}. This setup doesn't fit the precise definition of our folded or shifted dimers model, but, as in \cite{Ken14}, we can compute new interesting quantities by directly diagonalizing the Kasteleyn matrix and using Proposition \ref{kast:prop:kast}. \\

Let $G$ be the subgraph of $\ZZ^2$ with vertex set  $[0,n] \times [1,m]$. We define its boundary to be  $\partial = \{0,n\} \times [1,m]$. We define a graph $\mathcal{G}^r$ by taking two copies of $G$ with vertices labeled as $(x,y,1)$ and $(x,y,2)$ for $(x,y) \in G$ and by identifying $(x,y,1) = (x,y,2)$ for $(x,y) \in \partial G$. In other words, $\mathcal{G}^r$ is the cylinder $\ZZ/(2n\ZZ) \times [1,m]$ if we identify $(x,y,1) = (x,y)$ and $(x,y,2) = (-x,y)$, with the slight abuse of notation that we identify $x \in \ZZ$ with its projection $\bar{x} \in \ZZ/(2n\ZZ)$ so that in particular, for all $y$, $(n,y,1) = (n,y) = (-n,y) = (n,y,2)$.
A uniformly random dimer configuration on $\mathcal{G}^r$ gives a random configuration of disjoint loops, doubled edges and arcs, obtained by superimposing the two layers. Arcs link two boundary vertices that can be on the same side or on opposite sides of the cylinder.
\begin{remark}
    Note that this is an example where $\partial$ has two connected components. It complements the strip from Example \ref{ex:strip} where $\partial$ has one connected component.
\end{remark}
The arcs always have one black and one white endpoint since the vertices along the boundary (blue bullets and circles on Figure \ref{fig:torus}) are alternating in colour. We define the random variable $N_{n,m}$ to be the number of arcs traversing from one vertical boundary to the other.

\begin{figure}\centering
    \begin{overpic}[angle = 90, abs,unit=1mm,scale=0.5]{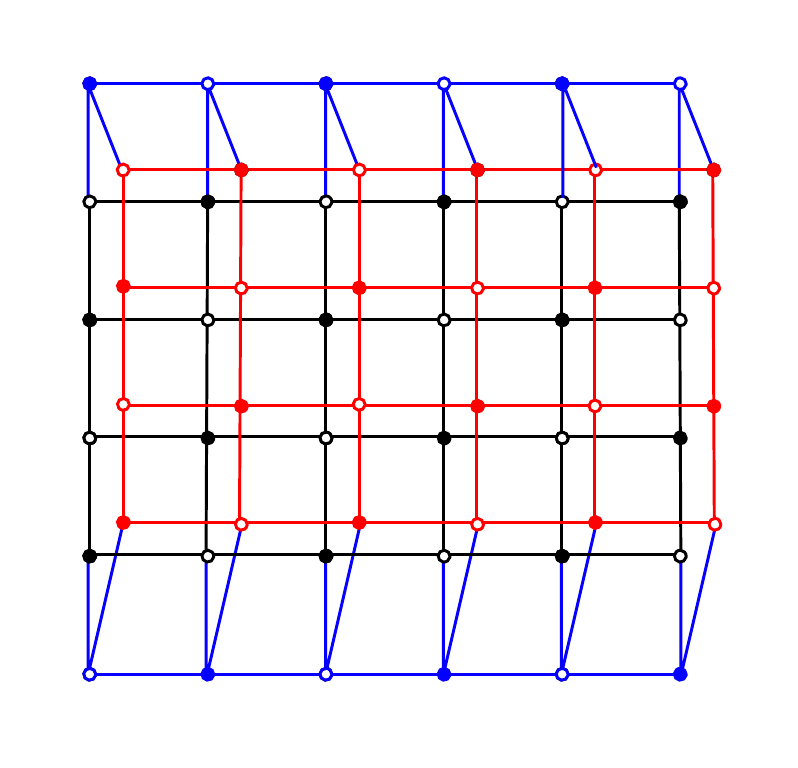}
    \put(2,3){\color{blue}$(0,1)$}
    \put(2,62){\color{blue}$(0,m)$}
    \put(51,2){\color{blue}$\substack{(n,1)\\=(-n,1)}$} 
    \put(13,3){$(1,1)$}
    \put(2,11){\color{red}$(-1,1)$}
    \end{overpic}
    \begin{overpic}[angle = 90,abs,unit=1mm,scale=0.9]{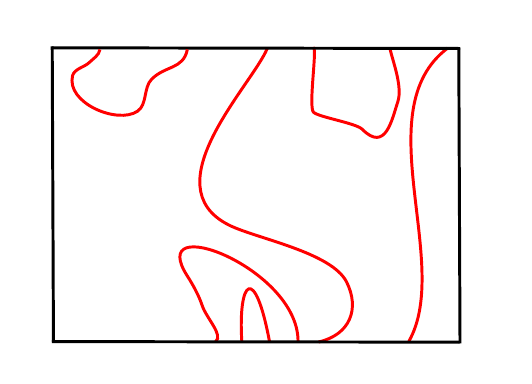}
    \end{overpic}
    \caption{The graph $\mathcal{G}^r$ on the left, with $(x,y,1)$ in black, $(x,y,2)$ in red and the boundary vertices in blue. On the right, a loops and arc configuration, with some of the arcs drawn in red (the loops are not drawn) and with $N_{n,m}=2$.}
    \label{fig:torus}
\end{figure}

\begin{theorem} 

Let $\tau > 0$ be fixed, $q = e^{-\pi \tau}$. When $n,m \to \infty$ with their ratio $\frac{n}{m} \to \tau$, the characteristic function of $N_{m,n}$ converges to an explicit limit
\bestar
\sum_{k} \PP[N_{m,n}=k]Y^k \overset{n \to \infty}{\longrightarrow}\prod_{\underset{j~odd}{j=1}}^{\infty}\frac{1+q^{2j}-2q^{j}+4Y^2q^{j}}{1+q^{2j}+2q^{j}}.
\eestar
\end{theorem}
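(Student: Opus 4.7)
The approach parallels Section~9 of \cite{Ken14}, which handles the analogous problem of counting loops winding around a (non-folded) cylinder in the double-dimer model. The proof proceeds in three steps.

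\textbf{Step 1: Pfaffian representation.} I would introduce a zipper $\gamma\subset G$ from $\{0\}\times[1,m]$ to $\{n\}\times[1,m]$, equip every bulk edge of $G$ crossed by $\gamma$ with the diagonal connection $\phi_e=\mathrm{diag}(z,z^{-1})\in\SL_2(\CC)$, and set the boundary vectors $\psi_e=\binom{1}{1}$. Since $\phi_e$ is diagonal, the off-diagonal entries of the $G^\times$-Kasteleyn matrix vanish and Corollary~\ref{kast:corr} reduces to a Pfaffian identity on $\mathcal{G}^r$. The key homological input is that $H_1(G,\partial)=\ZZ$ is generated by any left-to-right path: loops and non-traversing arcs are null-homologous relative to $\partial$ and therefore carry trivial monodromy, contributing a factor $2$ each, while each traversing arc represents $\pm$ the generator and contributes $\psi^T\phi^{\pm 1}_A\psi=z+z^{-1}$. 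Setting $2Y=z+z^{-1}$ yields
\bestar
\EE\bigl[Y^{N_{n,m}}\bigr]=\frac{\pf(K_z)}{\pf(K_1)}.
\eestar

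\textbf{Step 2: Diagonalisation.} The cylinder $\mathcal{G}^r=\ZZ/(2n)\times[1,m]$ carries the dihedral symmetry generated by translation in $x$ and by the reflection $\sigma:(x,y)\mapsto(-x,y)$ (which swaps the two copies of $G$ while fixing $\partial$). Combining these in a sine/cosine Fourier basis, $K_z$ becomes block-diagonal with blocks indexed by momenta $k_p=\pi p/n$, $p=1,\ldots,n-1$ (after handling the usual cylindrical spin-structure signs as in \cite[Section~9]{Ken14}). Together with the parity constraint $N_{n,m}\equiv m\pmod{2}$ (obtained by counting left-boundary vertices matched to the bulk: if $a$ are matched to the bulk and $b$ are matched in pairs, then $a+2b=m$ and $N_{n,m}\equiv a\equiv m\pmod 2$), only the Fourier modes with $p$ of a specific parity contribute nontrivially to the ratio of Pfaffians. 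Each surviving block reduces, via a transfer-matrix argument in the $y$-direction, to a $2\times 2$ matrix whose eigenvalues are explicit functions of $k_p$ and $z$.

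\textbf{Step 3: Scaling limit.} Taking $n,m\to\infty$ with $n/m\to\tau$, the $m$-th powers of the transfer-matrix eigenvalues contribute factors of the form $q^j=e^{-\pi\tau j}$ for odd positive integers $j$, the correspondence $p\leftrightarrow j$ reflecting the parity selection of Step~2. The ratio in each block simplifies (after substituting $2Y=z+z^{-1}$) to
\bestar
\frac{1+q^{2j}-2q^j+4Y^2q^j}{1+q^{2j}+2q^j},
\eestar
and the product over odd $j$ yields the claimed limit. As a consistency check, at $Y=1$ each factor equals $(1+q^j)^2/(1+q^j)^2=1$, and at $Y=-1$ it again equals $1$, which matches $N_{n,m}$ being asymptotically even.

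The main obstacle is the combinatorial claim of Step~1: one must verify that Corollary~\ref{kast:corr}, originally stated for $G^\times$, applies to $\mathcal{G}^r$ (which lacks the diagonal edges of $G^\times$) with our diagonal connection and yields exactly the desired weight on each loop and arc. This requires careful bookkeeping of the lift-multiplicities of $\omega$-components in $\mathcal{G}^r$ versus $G^\times$, of the contribution of doubled edges, and of trivial (length-$1$) arcs on the vertical sides of $\partial$. Once the Pfaffian identity is in place, Steps~2 and 3 are essentially Fourier and transfer-matrix asymptotic computations in the spirit of \cite[Section~9]{Ken14}.
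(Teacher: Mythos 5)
Your overall strategy — diagonal $\SL_2$ connection, Pfaffian ratio via Proposition~\ref{kast:prop:kast}, then Fourier/transfer-matrix asymptotics as in Kenyon's Section~9 — is the right one and is essentially the paper's, but there are two concrete problems and one non-issue.

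First, the zipper in Step~1 is oriented wrong. The generator of $H_1(G,\partial)$ is a left-to-right arc, as you say, but the dual object that detects the class of an arc lives in $H^1(G,\partial)\cong H_1(G,\partial G\setminus\partial)$, which is generated by a \emph{top-to-bottom} relative cycle. With $\gamma$ running from $\{0\}\times[1,m]$ to $\{n\}\times[1,m]$, both $\gamma$ and a traversing arc are relative cycles with boundary in $\partial$, so their intersection number is not a homological invariant: a traversing arc can cross your $\gamma$ zero times, once, or any number of times depending on representatives. The monodromy $\phi_A$ therefore does not distinguish traversing from non-traversing arcs, and the claimed identity $\EE[Y^{N_{n,m}}]=\pf K_z/\pf K$ fails. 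The correct zipper is the column of horizontal bulk edges crossed by a top-to-bottom dual curve. The paper goes further and puts $\mathrm{diag}(a,1/a)$ on \emph{every} horizontal bulk edge (with suitable $\psi$-vectors carrying the remaining $a^{\pm1}$ factors at the boundary), which is the superposition of all these parallel zippers; a traversing arc then picks up $a^n+a^{-n}=\rho+\rho^{-1}$ while every loop or non-traversing arc gets $2$.

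Second, even after fixing the zipper, Step~2 has a gap: with the weight concentrated on one column, $K_z$ is not translation-invariant around the cylinder, so the Fourier block-diagonalisation you describe cannot be applied as stated. You would first need a $\CC^*$ gauge transformation to smear the zipper weight evenly over the cylinder — at which point you have rederived the paper's $K_a$. The paper exploits exactly this: its symmetrised matrix $\bK_a$ is, by construction, precisely the translation-invariant cylinder Kasteleyn matrix of Section~9 of \cite{Ken14}, so the determinant ratio (with $\lambda=\rho^2$, $X=\rho^2+\rho^{-2}$) is read off directly from Kenyon's computation rather than redone. This is a genuine shortcut — no new Fourier or transfer-matrix work is carried out in the paper.

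Finally, the "main obstacle" you flag — whether Corollary~\ref{kast:corr} applies on $\mathcal{G}^r$ — is a non-issue: with a diagonal connection, the off-diagonal entries of the $G^\times$-Kasteleyn matrix vanish identically, so the Pfaffian on $G^\times$ equals that on $\mathcal{G}^r$ with no extra bookkeeping. The real gaps are the two above.
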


\begin{proof}
We choose complex Kasteleyn phases $\zeta$ by putting $\zeta = 1$ on all horizontal edges and $\zeta = i$ on all vertical edges. These complex Kasteleyn phases can be obtained by a gauge transform from the ones of Figure \ref{kast-fig-real_weights}, so Proposition \ref{kast:eq:propkast} holds with the complex phases $\zeta$ by Remark \ref{kast:rmk:prop}. Then, for any choice of connection $\phi$ on $G$, if $K_{\phi}$ is the Kasteleyn matrix associated to the connection $\phi$ and the phases $\zeta$, Proposition \ref{kast:prop:kast} holds.
We now turn to the choice of the connection. Let $\rho \in \CC$ be fixed and $a \in \CC$ such that $a^n = \rho$. We can define a connection on the bulk by setting, for any horizontal bulk edge $e$ (pointing right), $\phi_e = \begin{pmatrix} a&0 \\ 0&\frac{1}{a}\end{pmatrix}$ and $\phi_{-e} = \phi_e^{-1}$. For any vertical bulk edge, we define $\phi_e = I_2$. We also define $\psi_e = \begin{pmatrix} a \\ \frac{1}{a}\end{pmatrix}$ for all (horizontal) edges $e$ pointing right linking a boundary point with a bulk point and $\psi_e = \begin{pmatrix} \frac{1}{a} \\ a\end{pmatrix}$ for all (horizontal) edges $e$ linking a boundary point with a bulk point pointing left. Denote by $K_a$ the Kasteleyn matrix associated with this choice of connection and orientation as in the preceding section and $K = K_1$ the Kasteleyn matrix associated with the trivial connection. We can now analyse the formula of Proposition \ref{kast:prop:kast}. The loops have monodromy $I_2$ so they get a weight $2$, and the arcs that come back to the same boundary also get a weight $2$, while the arcs that go from one side to the other get a weight $\psi_{b_A}^\T\phi_A\psi_{w_A} = a^n + a^{-n} = \rho + \rho^{-1}$. Hence, 
\bestar
    \pf K_a = \sum_{\om\in\Om} 2^{\#loops}2^{\#arcs}\left( \frac{\rho + \rho^{-1}}{2}\right)^{\#arcs~traversing},
\eestar
so
\be\label{eq:generating}
    \frac{\pf K_a}{\pf K} = \sum_{k \in \NN} \PP[N_{m,n}=k]\left( \frac{\rho + \rho^{-1}}{2}\right)^k.
\ee
We are left with computing the left-hand side, or rather its square $\frac{\det(K_a)}{\det{K}}$. We define the matrix $\bK_a$ by $\bK_a(w,b) = K_a(w,b)$ for all $b$ black and $w$ white in $\mathcal{G}^r$: it is the symmetric version of $K_a$ (which is by definition antisymmetric). Using the block structure of $K_a$ and $\bK_a$, we get $\det(\bK_a) = - \det(K_a)$ so the ratio of determinants is left unchanged. We now observe that the matrix $\bK_a$ that we have constructed is exactly that of Kenyon. Writing $\lambda = \rho^2$ (which satisfies $a^{2n}=\lambda$, $X = \rho^2 + \frac{1}{\rho^2}$), and since $K$ corresponds to $a=1$ and $X=2$, if $m$ is even, the results of Section 9 of \cite{Ken00} imply
\bestar
    \frac{\det(K_a)}{\det(K)} = \frac{\det(\bK_a)}{\det(\bK)} \underset{\frac{n}{m}\to \tau}{\overset{n \to \infty}{\longrightarrow}} \prod_{\underset{j~odd}{j=1}}^{\infty}\frac{(1+q^{2j}+Xq^j)^2}{(1+q^{j}+q^{2j})^2}.
\eestar
Combining this with Equation \ref{eq:generating} gives
\bestar
    \left(\sum_{k} \PP[N_{m,n}=k]\left( \frac{\rho + \rho^{-1}}{2}\right)^k\right)^2 \underset{\frac{n}{m}\to \tau}{\overset{n \to \infty}{\longrightarrow}} \prod_{\underset{j~odd}{j=1}}^{\infty}\frac{(1+q^{2j}+Xq^j)^2}{(1+q^j+q^{2j})^2}.
\eestar
For $\rho \in \CC$, such that $Y = \frac{\rho + \frac{1}{\rho}}{2} > 0$, $Y^2 = \frac{X + 2}{4}$ so $X > -2$ and all (squared) factors on both sides are non-negative so we can take the square root:
\bestar
    \begin{aligned}
        \sum_{k} \PP[N_{m,n}=k]Y^k \underset{\frac{n}{m}\to \tau}{\overset{n \to \infty}{\longrightarrow}} \prod_{\underset{j~odd}{j=1}}^{\infty}\frac{1+q^{2j}+Xq^j}{1+q^j+q^{2j}} =\prod_{\underset{j~odd}{j=1}}^{\infty}\frac{1-2q^{j}+q^{2j}+4Y^2q^{j}}{1+q^j+q^{2j}}.
    \end{aligned}
\eestar
\end{proof}

\appendix
\section{Computation in the continuum (by Avelio Sep\'ulveda)} \label{app:ALE}
The objective of this section is to compute the expected values of the random variables $n(z)$ and $o(z)$ for the ALE. These random variables are clearly conformally invariant, as the ALE itself is. It therefore suffices to compute them in a specific given domain, which we do below. Recall that the ALE has the same distribution regardless of the Dirichlet/Neumann boundary conditions of the accompanying GFF - this follows from the coupling of \cite{QW}. To be precise, we plan to show the following.
\begin{proposition}\label{p.ALE_moments}
    Let $\Ale$ be an ALE in the strip $\R\times [0,\pi/2]$ with arcs emanating from the lower boundary, $n(z)$ the number of arcs of $\Ale$ that separate the upper boundary from $z$ and $O(z)=(-1)^{n(z)}$. We have that
    \begin{align*}
    &\E\left[ n(z)\right ]=  \frac{1}{4}- \frac{(\Im(z))^2}{\pi^2}-\frac{2}{\pi^2}\log(\sin(\Im (z)))\\
&\mathbb P\left[O(z)=-1 \right] = 1/2-\Im(z)/\pi.    \end{align*}
\end{proposition}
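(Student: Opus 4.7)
\emph{Plan.} Both formulas will be derived from the Qian--Werner coupling of $\Ale$ with a GFF.

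For the parity $\mathbb P[O(z)=-1]$, I would use the coupling of $\Ale$ with the Dirichlet GFF $\Phi^D$ on $U=\mathbb R\times(0,\pi/2)$ having boundary value $\lambda$ on the bottom $\mathbb R\times\{0\}$ and $0$ on the top $\mathbb R\times\{\pi/2\}$. In this coupling the arcs of $\Ale$ are the level lines across which the ALE-attributable part of $\Phi^D$ jumps by $\pm 2\lambda$ in a deterministic alternating pattern; once the additive constant is fixed so that the outer component contributes $0$, this part equals $2\lambda\,\mathbbm 1\{n(z)\text{ odd}\}$, while the residual (CLE$_4$-type) fluctuations in each component have zero conditional mean given $\Ale$. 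Taking expectations and using that $\mathbb E[\Phi^D(z)]$ is the harmonic extension of the boundary data, namely $\Phi_0(z)=\lambda(1-2\Im(z)/\pi)$ on the strip, one reads off $2\lambda\,\mathbb E[\mathbbm 1\{n(z)\text{ odd}\}]=\Phi_0(z)$, which rearranges to $\mathbb P[O(z)=-1]=1/2-\Im(z)/\pi$.

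For the expectation of $n(z)$ I would switch to the coupling of $\Ale$ with the mixed GFF $\Phi^N$ (Neumann on the bottom, Dirichlet $0$ on the top). Here the arcs of $\Ale$ carry independent $\pm 1$ orientations $\epsilon_k$ and there is a decomposition $\Phi^N=H^N+\Gamma$ with $H^N(z)=2\lambda\sum_k \epsilon_k\,\mathbbm 1\{z\in A_k\}$ and $\Gamma$ a conditionally independent GFF in the random component $C(z)$ of $U\setminus\Ale$ with mixed boundary conditions (Dirichlet $0$ on the arcs and on the top, Neumann on the bottom part of $\partial C(z)$). Since the $\epsilon_k$ are i.i.d.\ given $\Ale$, one has $\operatorname{Var}(H^N(z)\mid\Ale)=(2\lambda)^2 n(z)=(\pi/2)\,n(z)$, and the law of total variance, applied at the level of the standard $\varepsilon$-regularisation of the fields, gives
\begin{equation*}
 (\pi/2)\,\mathbb E[n(z)] \;=\; g_{\mathrm{NW}}(z,z)\;-\;\mathbb E\bigl[g_{\mathrm{mix},C(z)}(z,z)\bigr],
\end{equation*}
where $g_{\mathrm{NW}}(z,z)=\frac{1}{2\pi}\log\bigl(\frac{2\cos y}{\sin y}\bigr)$ is obtained directly by reflecting across the bottom and applying the conformal map $z\mapsto i e^z$ from the doubled strip to $\mathbb H$, and $g_{\mathrm{mix},C(z)}$ is the regularised Green's function in $C(z)$ with the inherited mixed data.

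To evaluate $\mathbb E[g_{\mathrm{mix},C(z)}(z,z)]$ I would again reflect $C(z)$ across the bottom, turning the mixed problem into a pure Dirichlet problem on the symmetric doubled component $C^r(z)=C(z)\cup\bar C(z)\subset U^r$:
\begin{equation*}
 g_{\mathrm{mix},C(z)}(z,z)\;=\;g_{D,C^r(z)}(z,z)\;+\;G_{D,C^r(z)}(z,\bar z).
\end{equation*}
The on-diagonal piece $\mathbb E[g_{D,C^r(z)}(z,z)]$ can be controlled by a parallel variance decomposition in the zero-boundary Dirichlet coupling, where the sign of each component is uniform on $\{\pm 1\}$ and contributes variance $\lambda^2$; this yields $\mathbb E[g_{D,C(z)}(z,z)]=g_D(z,z)-\lambda^2$ and a symmetric lift to $C^r(z)$. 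The off-diagonal piece $\mathbb E[G_{D,C^r(z)}(z,\bar z)]$ is the genuinely new ingredient: using the reflection symmetry of $C^r(z)$ and the parity formula from the first part (which controls the harmonic measure of the bottom half of $C^r(z)$ from $z$), one can evaluate it explicitly on the strip, and it is precisely this computation that produces the quadratic $-y^2/\pi^2$ correction in the final formula.

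I expect the hardest step to be the explicit evaluation of $\mathbb E[G_{D,C^r(z)}(z,\bar z)]$: it is not delivered by any of the variance-decomposition identities and requires a separate geometric argument using the reflection symmetry of $C^r(z)$ together with Part 1.
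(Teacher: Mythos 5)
Your Part~1 mirrors the paper's argument: the ALE-attributable harmonic piece of the Dirichlet GFF encodes the parity of $n(z)$ (taking values $\pm\lambda$, or $\{0,2\lambda\}$ in your shifted convention), so taking expectations and reading off the harmonic extension of the boundary data gives $\mathbb P[O(z)=-1]=1/2-\Im(z)/\pi$. The shift/sign convention you adopt differs from the paper's but the identity is the same.

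Part~2 starts at the right place (the law of total variance for the mixed GFF $\check\Phi$), but then diverges into a route with a genuine gap. The paper does not try to evaluate $\E\bigl[g_{\mathrm{mix},C(z)}(z,z)\bigr]$ directly by reflecting $C(z)$ and splitting the Green's function into on- and off-diagonal pieces. It instead runs a \emph{second} variance decomposition, this time for the Dirichlet GFF $\Phi$ with boundary data $\lambda$ on $\partial_1$ and $0$ on $\partial_2$, whose ALE-attributable piece is $\lambda O(z)\in\{\pm\lambda\}$ and therefore has conditional second moment $\lambda^2$ deterministically; this yields an identity of the form $u^2(z)=\lambda^2+\E\bigl[g_{C(z)}(z,z)\bigr]-g_D(z,z)$, with $u$ the harmonic extension of $\lambda\mathbf 1_{\partial_1}$. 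Since $\Ale$ coincides with $\A_{-\lambda,\lambda}$ of $\Phi$, substituting this into the $\check\Phi$-identity \emph{eliminates} the unknown conditional term, and the $-(\Im z)^2/\pi^2$ piece in the final answer is produced precisely by $u^2(z)$. Your proposed on-diagonal formula $\E\bigl[g_{D,C(z)}(z,z)\bigr]=g_D(z,z)-\lambda^2$ is what one would write with $u\equiv 0$, but a zero-boundary GFF on $D$ does not have the ALE as its $\A_{-\lambda,\lambda}$ (its two-valued set consists of loops, not arcs rooted on $\partial_2$), so the ``parallel decomposition in the zero-boundary Dirichlet coupling'' you invoke does not govern $C(z)$; the correct decomposition is for the $\Phi$ with $\lambda$ boundary data, which restores exactly the $u^2(z)$ term your formula drops. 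Finally, the off-diagonal term $\E\bigl[G_{D,C^r(z)}(z,\bar z)\bigr]$, which you flag as the hard unresolved step without a concrete plan, never needs to be computed once the two decompositions are combined.
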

Note importantly that these moments exactly coincide with Example \ref{ex:strip}. Note that $n(z)$ is almost surely finite as the ALE is locally finite (any compact set intersects finitely many arcs almost surely). Unfortunately while we have formulae for all the moments of $n_\eps(z)$ and $o_\eps(z)$ in the $\eps\to0$ limit by Theorem \ref{thm:asymp:ne:oe}, higher moments in the ALE are not currently available to us.\\

Before showing the proposition, let us define the strips $S^+=\R\times [0,\pi/2]$ and $\check S=\R\times[-\pi/2,\pi/2]$. Note that the conformal radius of these strips is given by
\begin{align*}
	&CR(z,\check S)= 2 \cos(\Im(z)),\\
	&CR(z,S^+) = \sin(2\Im(z)) = 2\sin (\Im(z)) \cos(\Im(z)).
\end{align*}

In what follows $\lambda=\sqrt{\pi/8}$. Let $D\subseteq \HH$ be a domain with $\partial D= \partial_1 \cup \partial_2$, $\partial_1\cap \partial_2=\emptyset$ and $\partial_2$ an interval of $\R$. We also define $\check D$ to be the domain obtained by reflecting $D$ on $\HH$. We take $\Phi$ to be a GFF in a domain $D$ with boundary condition $\lambda$ on $\partial_1$ and $0$ on $\partial_2$, and $\check \Phi$ be a GFF in $D$ with boundary condition $0$ on $\partial_1$ and free on $\partial_2$. We couple $\Phi$ and $\check \Phi$ such that $\A_{-\lambda,\lambda}$ of $\Phi$ coincides with $\Ale$ -- the ALE of $\check \Phi$ as in \cite{QW} (See Figure \ref{f.coupling}). Furthermore, we denote by $u(z)$  the harmonic function in $D$ with boundary values $\lambda \mathbf 1_{\partial_1}$ so that $\Phi-u(z)$ is a GFF with $0$ boundary conditions in $D$.
\begin{figure}[h!]
	\includegraphics[width=0.9\textwidth]{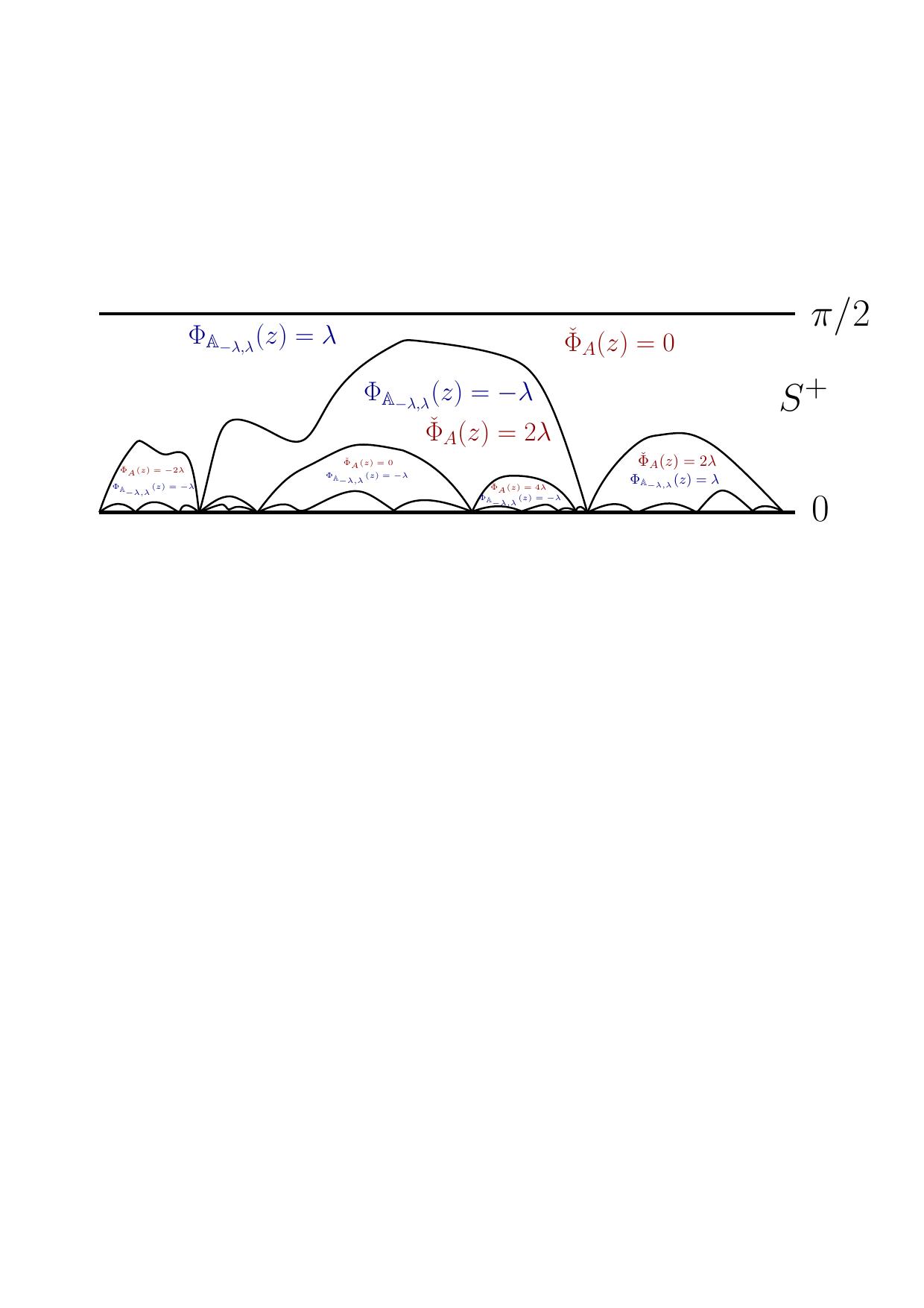}
	\caption{Representation of $\A_{-\lambda,\lambda}$ and $A$ in $S^+$. The harmonic function associated to $\Phi_{\A_{-\lambda,\lambda}}$ takes alternating values in $\pm 2\lambda$, on the other hand $\check \Phi_A$ starts with value $0$ on top and each time it crosses a curve changes values of $\pm 2\lambda$ independently in each connected component.}
	\label{f.coupling}
\end{figure}

\begin{proof}[Proof of Proposition \ref{p.ALE_moments}] Let $\Phi_\eps$ and $\check \Phi_\eps$ denote the $\eps$-circle average of $\Phi$ and $\check \Phi$ respectively. A straightforward computation yields that (see for example Theorem 1.23 of \cite{BP})
 \begin{align}
 	\E\left[(\Phi_\eps(z)-u(z))^2 \right] - \frac{1 }{2\pi }\log(1/\eps) \to  \frac{1 }{2\pi }\log(CR(z;D)) \ \ \ \text{ as } \eps \to 0.
 \end{align}
 Furthermore, as $\Gamma= \frac{1}{\sqrt{2}}(\Phi -u + \check \Phi)$ has the law of a GFF with $0$ boundary conditions on $\check D$ restricted to $D$, we have that
 \begin{align*}
 		\E\left[(\check \Phi_\eps(z))^2 \right] - \frac{1 }{2\pi }\log(1/\eps)  \to \frac{1 }{\pi }\log(CR(z;\check D)) - \frac{1 }{2\pi }\log(CR(z;D )) \ \ \ \text{ as } \eps \to 0.
 \end{align*}

Now, note that the strong Markov property of the GFF implies that
\begin{align*}
\Phi = \Phi_{\A_{-\lambda,\lambda}} + \Phi^{\A_{-\lambda,\lambda}} \text { and }
\check \Phi = \check \Phi_\Ale + \check \Phi^\Ale,
\end{align*}
where $\Phi_{\A_{-\lambda,\lambda}}$ and $\check \Phi_\Ale$ can be represented by harmonic functions thanks to \cite{Sep19}, and conditional on $\Ale$,  $\Phi^{\A_{-\lambda,\lambda}}$ and $\check \Phi^\Ale$ are 0 boundary GFF on $D\backslash \Ale$ (conditionally) independent of $(\Phi_{\A_{-\lambda,\lambda}},\check \Phi_\Ale)$. Furthermore, it is possible to check that
\begin{align*}
\Phi_{\A_{-\lambda,\lambda}}(z) = \lambda O(z)  \text{ and }
\check\Phi_{\Ale}= \sum_{k=1}^{n(z)} 2\lambda\xi_k,
\end{align*}
where $(\xi_k)_{k=1}^\infty$ are Rademacher random variables independent of $\Ale$.

We compute now the expected value of $O(z)$. To do this, we use the strong Markov property of the GFF $\Phi$ with respecto to $\A_{-\lambda,\lambda}$ to see that
\begin{align*}
	u(z) = \E\left[\Phi_{\A_{-\lambda,\lambda}}(z) \right].
	\end{align*}
From the above we obtain that 
\begin{align*}
	\E\left[O(z) \right] = u(z)/\lambda\ \  \text{ and when $D=S$, } \ \
	\mathbb P\left[O(z)=-1 \right] = 1/2-\Im(z)/\pi. 
\end{align*}

We now need to compute the expected value of $n(z)$. To do this, we study the square of both $\Gamma$ and $\check \Gamma$. For the first one the Markov property implies that
\begin{align*}
u^2(z)=\E\left[(\Phi_\eps(z) )^2\right] = \E\left[( (\Phi_{\A_{-\lambda,\lambda}})^D_\eps(z))^2 \right]   +\E\left[( (\Phi^{\A_{-\lambda,\lambda}})_\eps(z))^2 \right].
\end{align*}
Thus
\begin{align}
\nonumber u^2(z)&=\E\left[( (\Phi_{\A_{-\lambda,\lambda}})_\eps(z))^2 \right]  + \E\left[( (\Phi^{\A_{-\lambda,\lambda}})_\eps(z))^2 -(\Phi_\eps(z)-u(z))^2\right] \\
\label{e.0-ALE}&\to \lambda^2 + \frac{1 }{2\pi }\E\left[ \log\left( \frac{CR(z;D\backslash \A_{ -\lambda, \lambda})}{CR(z;D)}\right)\right] \ \ \text{ as } \eps \to 0.
\end{align}

On the other hand, using $\Ale$ instead of $\A_{-\lambda,\lambda}$ and $ \check \Phi$ instead of $\Phi$  we have that
\begin{align}\label{e.SMP}
	\E[(\check \Phi_\eps(z))^2]=\E\left[( (\check \Phi_{\Ale})_\eps(z))^2 \right]   +\E\left[( (\check \Phi^{\Ale})_\eps(z))^2 \right].
\end{align}
From here we obtain that
\begin{align*}
0&=\E\left[( (\check \Phi_{\Ale})_\eps(z))^2 \right]  + \E\left[( (\check \Phi^{\Ale})_\eps(z))^2 -(\check \Phi_\eps(z))^2\right] \\
&\to \E\left[\left ( 2\lambda\sum_{k=1}^{n(z) }\xi_k \right)^2 \right] +\frac{1 }{2\pi }\E\left[ -2\log(CR(z;\check D))+\log(CR(z;D)) +\log\left( CR(z;D\backslash \Ale)\right)\right] \\
&=4\lambda^2\E\left[n(z)  \right]+ \frac{1 }{2\pi }\E\left[ -2\log(CR(z;\check D))+\log(CR(z;D)) +\log\left( CR(z;D\backslash \Ale)\right)\right].
\end{align*}

As $\Ale=\A_{-\lambda,\lambda}$, we have as a consequence 
\begin{align*}
	\E\left[n(z)  \right] = \frac{1}{4}- \frac{2u^2(z)}{\pi}-\frac{2}{\pi^2}(\log(CR(z;D))-\log(CR(z;\check D))).
\end{align*}

In particular, this means that when $D=S^+$ we have that
\begin{align*}
\E\left[n(z) \right] = \frac{1}{4}- \frac{(\Im(z))^2}{\pi^2}-\frac{2}{\pi^2}\log(\sin(\Im (z))).
\end{align*}
\end{proof}

\section{Inverse Kasteleyn matrix in Temperleyan domains}\label{appendix:A}

In Appendices \ref{appendix:A} and \ref{app:B} we present proofs of the behaviour of the inverse Kasteleyn Matrix in the small mesh limit - that it can be written as a sum of functions expressed in terms of the continuum Green function. These are technical proofs and are similar to computations in, for example, \cite{Ken00}. In this section we deal with ordinary Temperleyan domains, which is needed for our folded dimer model, and in particular show that the statement of the behaviour of the inverse Kasteleyn matrix extends to certain neighbourhoods of the boundary. In Appendix \ref{app:B} we deal with piecewise Temperleyan domains, which is needed for our shifted dimer model.

In this section, the inverse Kasteleyn matrix on Temperleyan domains will often be called the \emph{coupling function} as in \cite{Ken00}. We use the notation and setting of Section \ref{section:l&a}. We start by recalling some well-known facts on the coupling function and discrete holomorphic functions. The acquainted reader can jump directly to the statements of Lemma \ref{lem:Green} and Lemma \ref{lem:Green:boundary}.\\

Let $U$ be an open set, $\Ue$ an approximating sequence as in Definition \ref{def:approx} and $\Ge$ the associated Temperleyan approximation with a boundary vertex $b_0$ removed as in Section 3. Note that these results are applied to $U = U^r$ in Section \ref{sec:proof-main-thm}. To simplify notations, we will write $B = B(\Ge)$, $B_0 = B_0(\Ge)$ etc.  We use the usual Kasteleyn phases of discrete holomorphy described in Equation \eqref{kast-eq-conplex-kast-weights} and denote by $\tK$ the associated Kasteleyn matrix as in the proof of Theorem \ref{thm:asymp:ne:oe}. It can be checked that 
\bestar
    \tK^{*}\tK = \Delta,
\eestar
where the Laplacian has Dirichlet boundary conditions on $B_0$ and Neumann boundary conditions on $B_1$ except at $b_0$ where it has Dirichlet boundary conditions. The determinant of the right hand side counts tiling of a Temperleyan domain which are in bijection with spanning trees of $B_1$ rooted at $b_0$, hence it is positive, so $\det \tK$ is also positive and $\tK$ is invertible: we call its inverse $\tK^{-1}$ the coupling function.\par
In this section only, we use the operator notation $K(w,b)$ instead of the matrix notation $K_{w,b}$ to align with the notation of Kenyon. Let us fix $w \in W_0$ (the same holds for $w \in W_1$). We first recall how the coupling function $b \to K^{-1}(b,w)$ can be seen as a discrete meromorphic function: we call the \emph{real part} the restriction to $b \in B_0$ and the \emph{imaginary part} the restriction to $b \in B_1$. This is justified by the following:
    \bestar
    \begin{aligned}
        \tK^{-1}(b,w) 
        &= (G\tK^{*})(b,w)\\
        &= G\left(b, w+\frac{\eps}{2}\right) - G\left(b, w-\frac{\eps}{2}\right) + i\left[G\left(b, w-\frac{i\eps}{2}\right) - G\left(b, w+\frac{i\eps}{2}\right)\right],
        \end{aligned}
    \eestar
    where $G = \Delta^{-1}$ is the Green function. The function $G$ takes real values because the Laplacian $\Delta$ takes real values. Further, since the Laplacian has a block structure (it is non zero only from $B_0$ to $B_0$ and $B_1$ to $B_1$), $G$ has the same block structure. Hence, since $w \in W_0$, $w \pm \frac{\eps}{2} \in B_0$ while $w \pm \frac{i\eps}{2} \in B_1$, so $\tK^{-1}(b,w)$ is real for $b \in B_0$ and pure imaginary for $b \in B_1$ (justifying the terminology ``real'' and ``imaginary'' part).\par

Let us define $\partial^{out} B_0$ (the full red and blue circles on Figure \ref{fig:reflection}) to be the vertices in $B_0(\eps\ZZ^2) \setminus B_0$ (where $B_0(\eps\ZZ^2)$ are the vertex of the dual graph of $\eps\ZZ^2$ i.e. the faces of $\eps\ZZ^2$) at distance $\eps$ from a vertex of $B_0$ (see Kenyon, section 4.1) and $\partial^{out} B_1 = \{b_0 \}$ (full red square on Figure \ref{fig:reflection}). Define $\overline{B_0} = B_0 \cup \partial^{out} B_0$, $\overline{B_1} = B_1 \cup \partial^{out} B_1$ and $\overline{B} = \overline{B_0} \cup \overline{B_1}$. If we extend $b \in B_0 \to \tK^{-1}(b,w)$ by $0$ on the vertices of $\partial^{out} B_0$, it is a harmonic function on $B_0$ except at two points: $\Delta \tK^{-1}(\cdot, w)_{|B_0} = \d_{w + \eps/2} - \d_{w - \eps/2}$. Indeed, for $b \in B_0$, 
\be\label{eq:W0}
    \Delta \tK^{-1}(b,w) = \tK^{*}\tK \tK^{-1}(b,w) = \tK^{*}(b,w) = \overline{\tK(w,b)} = \d_{b=w + \eps/2} - \d_{b=w - \eps/2}
\ee
as a function of $b \in B_0$. Observe that this already fully characterizes the real part: there is a unique function which satisfies \eqref{eq:W0} and is $0$ on $\partial^{out}B_0$ (by the maximum principle). If we extend the imaginary part $b \in B_1 \to \tK^{-1}(b,w)$ by $0$ on $b_0$, the imaginary and real part are \emph{harmonically conjugated}: $\tK\tK^{-1} = I$ implies that for $w' \neq w$,
\be\label{eq:discrete:CR}
    \tK^{-1}\left(w'+\frac{\eps}{2},w\right)-\tK^{-1}\left(w'-\frac{\eps}{2},w\right) + i\left[\tK^{-1}\left(w'+\frac{i\eps}{2},w\right)-\tK^{-1}\left(w'-\frac{i\eps}{2},w\right)\right] = 0
\ee
(which also holds when $w'$ is on the boundary due to the choice of boundary conditions). This is the discrete Cauchy Riemann equation for $\tK^{-1}(\cdot,w)$ around the point $w'$.\par
In our case (and unlike Kenyon who considers multiply connected domains), the imaginary part is fully defined by saying that it is the harmonic conjugate of the real part and it is $0$ at the removed vertex $b_0$: by discrete integration along paths which avoid the singularity at $w$, the values of $\tK^{-1}(\cdot,w)$ on $B_1$ can be uniquely recovered from the values on $B_0$ and the boundary conditions at $b_0$. We summarize all this in a lemma:

\begin{lemma}\label{lem:coupling:function}
    The inverse Kasteleyn matrix $\tK^{-1}$ is uniquely defined, and for $w \in W_0$ fixed, it is characterized by the following:
    \begin{itemize}
        \item its real part is the (unique) harmonic function on $\overline{B_0}$ which is $0$ on $\partial^{out}B_0$ (Dirichlet boundary conditions) and satisfies
        \bestar
            (\Delta \tK^{-1})(\cdot, w) = \d_{w + \eps/2} - \d_{w - \eps/2} 
        \eestar
        \item its imaginary part is $0$ at $b_0$
        \item its real and imaginary part satisfy the discrete Cauchy Riemann equation \eqref{eq:discrete:CR} at all points $w' \in W \setminus \{w\}$.     
    \end{itemize}
\end{lemma}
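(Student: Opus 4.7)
The plan is to verify each of the three properties and then argue uniqueness, leaning on observations already made in the text preceding the lemma. Existence and invertibility of $\tK^{-1}$ I would take for granted from the factorisation $\tK^{*}\tK = \Delta$ together with Temperley's bijection, which gives $\det\Delta > 0$ and hence $\det\tK \neq 0$.

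First I would establish the source equation for the real part. Multiplying $\tK \tK^{-1} = \mathrm{Id}$ on the left by $\tK^{*}$ yields $\Delta \tK^{-1}(\cdot,w) = \tK^{*}(\cdot,w)$, and the identification $\tK^{*}(b,w) = \overline{\tK(w,b)}$ gives exactly the two-point source $\delta_{w+\eps/2} - \delta_{w-\eps/2}$ for $b \in B_0$, as in Equation \eqref{eq:W0}. The Dirichlet condition on $\partial^{\mathrm{out}} B_0$ follows from the boundary behaviour built into $\Delta^{-1}$, and within $\overline{B_0}$ the maximum principle ensures a unique harmonic function with these data. The condition $\tK^{-1}(b_0,w)=0$ is likewise a direct consequence of the Dirichlet condition imposed at $b_0$. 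Finally, the Cauchy--Riemann relation \eqref{eq:discrete:CR} at $w' \neq w$ is nothing but the equation $(\tK \tK^{-1})(w',w) = 0$ written out using the holomorphic Kasteleyn phases $\zeta_{b,w} = b - w$.

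For uniqueness, suppose two functions $F_1, F_2$ on $\overline{B}$ both satisfy the three conditions. Their difference $F := F_1 - F_2$ is harmonic on $\overline{B_0}$ with zero Dirichlet boundary, so $F \equiv 0$ on $\overline{B_0}$ by the maximum principle. Given that $F$ vanishes on $B_0$, the Cauchy--Riemann equation \eqref{eq:discrete:CR} at every $w' \in W \setminus \{w\}$ shows that the increments of $F$ across pairs of neighbouring $B_1$-vertices vanish. Since the graph $B_1$ is connected, this forces $F$ to be constant on $B_1$, and the constraint $F(b_0) = 0$ fixes the constant to zero.

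The only step that requires more than routine checking is path independence of the recovery of the imaginary part from the real part when proving uniqueness: one must know that integrating CR increments along two different paths in $B_1$ gives the same answer. Here the domain $U^r$ being simply connected is essential; path independence reduces to summing the CR identities over the enclosed white vertices, noting that the increments of the real part on the interior $B_0$ vertices cancel in telescoping pairs and that the singularity at $w$ lies in $W_0$, not in $B_1$, so no obstruction arises. This is the discrete analogue of the existence of a harmonic conjugate on a simply connected planar domain.
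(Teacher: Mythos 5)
Your argument is correct and coincides with the paper's own approach: the lemma has no freestanding proof and is stated as a summary of the discussion immediately preceding it, which your write-up reproduces (invertibility from $\tK^{*}\tK=\Delta$ and the spanning-tree count, the source equation via $\tK^{*}\tK\tK^{-1}=\tK^{*}$, the vanishing at $b_0$ by the convention on $\partial^{out}B_1$, and the discrete Cauchy--Riemann relation as $(\tK\tK^{-1})(w',w)=0$).

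One small remark on your final paragraph. Your own uniqueness argument — $F$ vanishes on $\overline{B_0}$, then each CR identity at $w'\neq w$ kills the increment of $F$ between the two $B_1$-neighbours sharing the white vertex $w'$, so $F$ is constant on the connected graph $B_1$ and the constant is fixed by $F(b_0)=0$ — does not require path independence. Path independence would be needed for the converse direction (constructing the imaginary part from the real part by summing CR increments), which is the existence half and is already handled by $\tK^{-1}$ being a bona fide matrix inverse. The point worth flagging in the uniqueness step is instead that the CR relation is unavailable at $w$ itself, so one should note that $B_1$ remains connected after deleting the single edge corresponding to $w$; this holds because $\Ge$ approximates a genuine two-dimensional domain and is not a tree, so the two endpoints of that edge are joined by a path in $B_1$ avoiding it.
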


We are now ready to give estimates of the coupling function. Let $\tilde{g}(u,v)$ be the analytic function of $v$ whose real part is the Dirichlet Green function $g(u,v)$ on $U$. Also define, as in Section 6.3 of \cite{Ken14},
\be\label{eq:def:Fpm}
    F_+(u,v) = \frac{\partial\tilde{g}(u,v)}{\partial u} \quad ; \quad F_-(u,v) = \frac{\partial\tilde{g}(u,v)}{\partial \bar{u}}.
\ee
Then, the following lemma holds:

\begin{lemma}\label{lem:Green}[\cite{Ken00}, Theorem 13]
  Let $\eta >0$ be fixed. If $u \in V(\Ge)$ is at distance at least $\eta$ from the boundary and $v \in V(\Ge)$ is at distance at least $\eta$ from $u$, and if $b$ and $w$ are within $O(\eps)$ of $u$ and $v$ respectively, then
  \bestar
    \tK^{-1}(b,w) = \left\{
    \begin{array}{ll}
        \eps Re(F_+(u,v) + F_-(u,v)) + O(\eps^2) & \text{if } w \in W_0, b \in B_0 \\
        \eps i Im(F_+(u,v) + F_-(u,v)) + O(\eps^2) & \text{if } w \in W_0, b \in B_1 \\
        \eps Re(F_+(u,v) - F_-(u,v)) + O(\eps^2) & \text{if } w \in W_1, b \in B_1\\
        \eps i Im(F_+(u,v) - F_-(u,v)) + O(\eps^2) & \text{if } w \in W_1, b \in B_0
    \end{array}
    \right.
  \eestar
  or for short, if $r,s \in \{\pm 1\}$ are such that $w \in W_{\frac{1-r}{2}}$, $b \in W_{\frac{1-s}{2}}$,
  \bestar
      \tK^{-1}(b,w) = \frac{1}{2}\left(F_+(u,v)+rF_-(u,v)+s\ol{F_-(u,v)}+rs\ol{F_+(u,v)}\right)+O(\eps^2)  
  \eestar
  where the $O(\eps^2)$ is uniform once $\eta$ is fixed. 
\end{lemma}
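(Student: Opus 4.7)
My plan rests on the characterization of $\tK^{-1}(\cdot,w)$ in Lemma \ref{lem:coupling:function}. It suffices to consider $w\in W_0$, the case $w\in W_1$ being entirely analogous with the dipole positioned vertically. For $b\in B_0$, the coupling function is the dipole
\bestar
  \tK^{-1}(b,w) = G(b,w+\tfrac{\eps}{2}) - G(b,w-\tfrac{\eps}{2}),
\eestar
where $G=\D^{-1}$ is the discrete Dirichlet Green's function of $\D=\tK^{*}\tK$ on $\ol{B_0}$. The task therefore reduces to a quantitative comparison of a discrete first derivative of $G$ in its second argument with a continuous first derivative of the Dirichlet Green's function $g$ of $U$.

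The main analytic input I would invoke is the sharp asymptotic expansion of $G$ in terms of $g$. Under the bulk hypotheses of the lemma,
\bestar
  G(b,u') = -\tfrac{1}{2\pi}\log|b-u'| + \hat g_\eps(b,u') + O(\eps^{2}),
\eestar
where the discrete-harmonic-in-$b$ regular part $\hat g_\eps$ approximates, together with its first $u'$-derivative, the continuous regular part $\hat g$ of $g(b,u')=-\tfrac{1}{2\pi}\log|b-u'|+\hat g(b,u')$ to order $\eps^{2}$. This is the content of \cite[\S 4]{Ken00}: write $G$ as the full-plane discrete Green's function minus the discrete-harmonic extension on $\ol{B_0}$ of its boundary values, then compare with the continuum via the discrete maximum principle and the discrete Harnack lemma. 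Taylor-expanding $u'\mapsto G(b,u')$ around the position of $w$ to first order expresses $\tK^{-1}(b,w)$ as $\eps$ times the appropriate real first derivative of $g$; combining this with the symmetry $g(u,v)=g(v,u)$ and the Wirtinger identity $\partial_{u_x}\tilde g=F_{+}+F_{-}$ yields $\tK^{-1}(b,w)=\eps\,\Re\bigl(F_{+}(u,v)+F_{-}(u,v)\bigr)+O(\eps^{2})$, which is the claimed estimate on $B_0$.

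For $b\in B_1$ I would invoke Lemma \ref{lem:coupling:function} again: the restriction of $\tK^{-1}(\cdot,w)$ to $B_1$ is the discrete harmonic conjugate of its restriction to $B_0$, pinned to zero at $b_0$. Since $F_{+}+F_{-}$ is analytic in $v$, the continuous harmonic conjugate in $v$ of $\Re(F_{+}+F_{-})(u,\cdot)$ is exactly $\Im(F_{+}+F_{-})(u,\cdot)$; a discrete path integration of the Cauchy--Riemann relation \eqref{eq:discrete:CR}, fed by the $B_0$ estimate already proved and the same $O(\eps^{2})$ Green's function bound applied to the orthogonal difference, produces the formula with $\eps\,i\,\Im(F_{+}+F_{-})(u,v)$ on $B_1$. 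The integration constant at $b_0$ matches because $g(u,\cdot)$ and its tangential derivatives vanish on $\partial U$, so $F_{+}+F_{-}$ inherits the required boundary decay. The case $w\in W_1$ is identical in structure, except that the dipole now sits vertically in $B_1$ and Taylor expansion in $y$ produces $\partial_{u_y}\tilde g=-i(F_{+}-F_{-})$, explaining the replacement of $F_{+}+F_{-}$ by $F_{+}-F_{-}$.

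The main obstacle is the uniform $O(\eps^{2})$ control on $\hat g_\eps-\hat g$ and on its first $u'$-derivative at the scale $\eta$. This is essentially the content of \cite[Thm.~13]{Ken00}, and the proof involves a careful comparison of the discrete and continuous harmonic extensions via discrete Harnack and harmonic-measure estimates. The more delicate extension of all of the above which allows $b$ itself to approach $\partial U$ requires an additional discrete Schwarz reflection across the flat portion of $\partial U^{r}$ and is the subject of the companion Lemma \ref{lem:Green:boundary}.
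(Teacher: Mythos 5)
The paper itself does not prove this lemma: it is stated as a direct citation of Kenyon \cite{Ken00}, Theorem~13, followed only by a one-sentence remark that the authors "only need the simplest part of the proof" (the two points are at distance of order $1$ and the domain is simply connected, so the harmonic conjugate is single-valued). Your proposal therefore goes further than the paper does, sketching a reconstruction of Kenyon's argument from the characterization in Lemma~\ref{lem:coupling:function}; the outline you give --- dipole representation for the restriction to $B_0$, quantitative comparison of discrete and continuous Dirichlet Green's functions, Taylor expansion of the dipole, discrete Cauchy--Riemann relation \eqref{eq:discrete:CR} for the restriction to $B_1$, vertical dipole for $w\in W_1$ --- is an accurate description of the conceptual steps underlying Kenyon's Theorem~13. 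This is a legitimate alternative route: the paper keeps its appendix short by deferring to the literature, while yours makes visible exactly which pieces of the machinery enter.

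One caution on precision, which you should straighten out before this could stand as a proof. The Taylor expansion of the dipole produces a first derivative of $g$ in the argument approximated by $w$, whereas $F_{\pm}(u,v)=\partial_{u}\tilde g(u,v),\ \partial_{\bar u}\tilde g(u,v)$ are derivatives in the \emph{first} argument of $\tilde g$; the symmetry $g(u,v)=g(v,u)$ does convert one into the other, but it also exchanges the two slots, which your concluding identity does not display. Likewise, for the $B_1$ case the discrete relation \eqref{eq:discrete:CR} is a Cauchy--Riemann condition in the varying black vertex $b$, so the relevant continuous harmonic conjugate is taken in the argument approximated by $b$, not the one you name. Both points are resolved once a consistent pairing of $(u,v)$ with $(w,b)$ is fixed; note that the pairing given in the hypotheses of this Lemma is the opposite of the one used in the proof of Theorem~\ref{thm:asymp:ne:oe} and in Lemma~\ref{lem:russ:boundary}, so part of the confusion is inherited from the statement rather than introduced by you. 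This is not a fatal gap, but a careful writeup must pin down the convention (the first argument of $F_{\pm}$ is the one approximated by the white vertex) and carry it through the Taylor expansion and the harmonic-conjugate step consistently.
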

We only need the simplest part of the proof since the points $u,v$ we consider are at distance of order $1$ (and not $\eps$) from each other and the domain is simply connected so the harmonic conjugate is single-valued. Actually, this lemma also works for points near the boundary: this is mostly Theorem 14 of \cite{Ken14}, but we make some arguments more precise.

\begin{lemma}\label{lem:Green:boundary}
    Assume (as in Section \ref{section:l&a}) that there exists $\zp \in \partial U$ and $\d > 0$ such that the boundary of $U$ is horizontal in the neighbourhood $B(\zp, \d)$ and that $U$ is contained in the half plane below this flat horizontal boundary. Assume that $\Ue$ also has a flat and horizontal boundary in $B(\zp,\d)$. Then for all $r, s \in \{\pm1\}$, $ w \in W_{\frac{1-r}{2}}(\Ge), b \in B_{\frac{1-s}{2}}(\Ge)$ each at distance at most $\delta/2$ of $\zp$ or at distance at least $\d/2$ from the boundary of $U$, if $|b-w|\geq \d/2$ and if $w$ and $b$ are within $O(\eps)$ of $u, v \in U$ respectively, then
  \bestar
      \tK^{-1}(b,w) = \frac{1}{2}\left(F_+(u,v)+rF_-(u,v)+s\ol{F_-(u,v)}+rs\ol{F_+(u,v)}\right)+O(\eps^2)  
  \eestar
    where the $O(\eps^2)$ is uniform once $\d$ is fixed. 
\end{lemma}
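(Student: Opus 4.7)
The plan is to reduce the boundary estimate to the known bulk estimate of Lemma~\ref{lem:Green} by a discrete Schwarz reflection across the flat horizontal portion of $\partial U$ inside $B(\zp,\d)$. If both $u$ and $v$ lie at distance at least $\d/2$ from $\partial U$, there is nothing to prove: Lemma~\ref{lem:Green} with $\eta=\d/2$ gives the claim directly. So the essential cases are the ones where at least one of $u,v$ lies inside $B(\zp,\d/2)$, and the strategy is to extend the coupling function $b\mapsto \tK^{-1}(b,w)$ analytically (in the discrete sense) across the flat boundary so that it becomes a bulk discrete holomorphic function on a strictly larger simply connected domain, to which the uniform bulk estimate applies.

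First I would make precise the discrete reflection. Let $s$ denote the Euclidean reflection across the horizontal line containing the flat piece of $\partial U$ inside $B(\zp,\d)$. Since $\Ue$ is horizontal in $B(\zp,\d)$, the reflection maps $\Ue\cap B(\zp,\d)$ to a strip of vertices sitting just above the former boundary; adding these reflected vertices and the induced edges to $\Ue$ produces a new Temperleyan graph $\Ue^{\mathrm{ref}}$ whose discrete boundary inside $B(\zp,\d)$ is pushed outward by a distance $\d$. Using Lemma~\ref{lem:coupling:function}, I would extend $\tK^{-1}(\cdot,w)$ to $\Ue^{\mathrm{ref}}$ as follows: the real part (on $B_0$) is extended by an odd reflection, forcing the Dirichlet zero condition on $\partial^{\mathrm{out}} B_0$ to be the consequence of antisymmetry across the reflection axis; the imaginary part (on $B_1$) is then extended by an even reflection, which is the unique choice consistent with the discrete Cauchy--Riemann equation~\eqref{eq:discrete:CR} at every white vertex that lay on or near the former boundary. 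One then checks that the extended function still satisfies the Cauchy--Riemann equations everywhere on $\Ue^{\mathrm{ref}}$ away from $w$ (and from the reflected singularity $s(w)$), and that the discrete Laplacian of the extended real part is $\delta_{w+\eps/2}-\delta_{w-\eps/2}-\delta_{s(w)+\eps/2}+\delta_{s(w)-\eps/2}$.

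Next I would apply Lemma~\ref{lem:Green} (the bulk estimate) in $\Ue^{\mathrm{ref}}$ with $\eta=\d/4$. The key point is that after reflection, both $b$ and $w$ (and their reflections) lie at distance at least $\d/4$ from the boundary of $\Ue^{\mathrm{ref}}$ and at distance at least $\d/2$ from each other, so the bulk estimate applies uniformly. It gives $\tK^{-1}(b,w)$ as a combination of $F^{\mathrm{ref}}_+$ and $F^{\mathrm{ref}}_-$ associated with the Green's function $g^{\mathrm{ref}}$ of the reflected continuum domain $U\cup s(U)\cup(\partial U\cap B(\zp,\d))$ at the points $u,v$. By linearity of the extension, $g^{\mathrm{ref}}(u,\cdot)=g(u,\cdot)-g(s(u),\cdot)$ near $u$, which is exactly the Schwarz reflection of the Dirichlet Green's function $g$ on $U$ described in Remark~\ref{rem:Schwarz}. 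Consequently $F^{\mathrm{ref}}_\pm$ restricted to $U$ coincide with the analytic continuations of $F_\pm$ through the flat boundary, and the bulk formula collapses to the desired expression in terms of $F_\pm(u,v)$.

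The main obstacle will be verifying that the discrete reflection is consistent across the bipartite structure: the flat boundary sits at a specific height in the lattice, and the parity of vertex types ($W_0/W_1$ and $B_0/B_1$) in the reflected strip must match the lattice's natural bipartition after reflection. This is where the assumption that $\partial U$ is horizontal in $B(\zp,\d)$ and that $U$ lies entirely below it (together with the specific choice of $\partial^{\mathrm{out}}B_0$ and of $b_0$) is crucial: it guarantees that odd/even extensions of the real/imaginary parts map $B_0$ to $B_0$ and $B_1$ to $B_1$ (up to a relabelling via $r,s$), so that the discrete harmonicity and Cauchy--Riemann relations are genuinely preserved. Once this combinatorial matching is in place, the quantitative estimate follows essentially from the bulk case with the same $O(\eps^2)$ uniformity (the constant depending only on $\d$ through $\eta=\d/4$).
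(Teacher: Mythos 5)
Your overall strategy — discrete Schwarz reflection, then the bulk estimate of Lemma~\ref{lem:Green}, then the continuum reflection identity — is exactly the paper's. But there is a genuine gap in the way you set up the reflection: you only reflect the local strip $\Ue\cap B(\zp,\d)$, producing a graph $\Ue^{\mathrm{ref}}$ whose new boundary near $\zp$ is roughly the circle $\partial B(\zp,\d)$. That does not work. The odd reflection of the real part of $\tK^{-1}(\cdot,w)$ forces the extended function to vanish on the reflection axis, but on the new outer boundary of $\Ue^{\mathrm{ref}}$ (the reflected images of interior $B_0$ vertices at height $-\d$, say) the extended function takes the values $-\tK^{-1}(b,w)$, which are generically nonzero. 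So the extension is not the coupling function of any Temperleyan domain with boundary $\partial\Ue^{\mathrm{ref}}$, and Lemma~\ref{lem:Green} cannot be invoked for it. Relatedly, the continuum identity you want, namely that the Dirichlet Green's function $g$ of $U$ is the antisymmetrization $g(u,v)=g^{s}(u,v)-g^{s}(s(u),v)$ of the Green's function $g^{s}$ of the reflected domain, holds only when that reflected domain is the full $U\cup s(U)$; it is false for a local circular bump. (As written, your formula $g^{\mathrm{ref}}(u,\cdot)=g(u,\cdot)-g(s(u),\cdot)$ also has the roles reversed.)

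The fix is to reflect the \emph{entire} graph across the horizontal axis, as the paper does. This is precisely what the hypothesis ``$U$ is contained in the half-plane below this flat horizontal boundary'' buys you: it guarantees $U$ and $s(U)$ are disjoint so that $U\cup s(U)$ is an open set and $\Ge^s=\Ge\cup s(\Ge)$ (with the flat boundary vertices identified, and two extra boundary points added at the ends of the flat piece) is a simply connected Temperleyan graph. The odd/even extension then vanishes on \emph{all} of $\partial^{out}B_0^s$ by symmetry, and one checks the discrete Cauchy--Riemann equations propagate across the axis. Since it has two simple poles (at $w$ and $s(w)$), the extended function is not itself a single coupling function but equals the difference $H^s(\cdot,w)-H^s(\cdot,s(w))$ of coupling functions on $\Ge^s$; both $w$ and $s(w)$ now lie at distance $\geq\d/2$ from $\partial(U\cup s(U))$, so Lemma~\ref{lem:Green} with $\eta=\d/2$ applies to each term, and the continuum identity $\tilde g(u,v)=\widetilde{g^s}(u,v)-\widetilde{g^s}(s(u),v)$ (valid because $g$ vanishes on the flat piece of $\partial U$) collapses the result to the claimed expression in $F_\pm$.
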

In particular, with the notation of the lemma,
\be\label{eq:K:bounded}
    |\tK^{-1}(b,w)| \leq C(\delta)\eps
\ee
for some constant $C$ depending only on $\delta$.
\begin{remark}
These two lemmas could actually be proved in a slightly more general setting when $\d = \d(\eps) \overset{\eps \to 0}\longrightarrow 0$. 
\end{remark}

The proof consists in detailing some arguments in the proof of Theorem 14 of \cite{Ken00} (more precisely we use the argument of Corollary 19) but our statement applies to all points at distance at most $\d/2$ from $\zp$, unlike the statement of Kenyon which applies only to the points within $O(\eps)$ of the boundary.
\begin{proof}
    \begin{figure}\centering
    \begin{overpic}[abs,unit=1mm,scale=0.5]{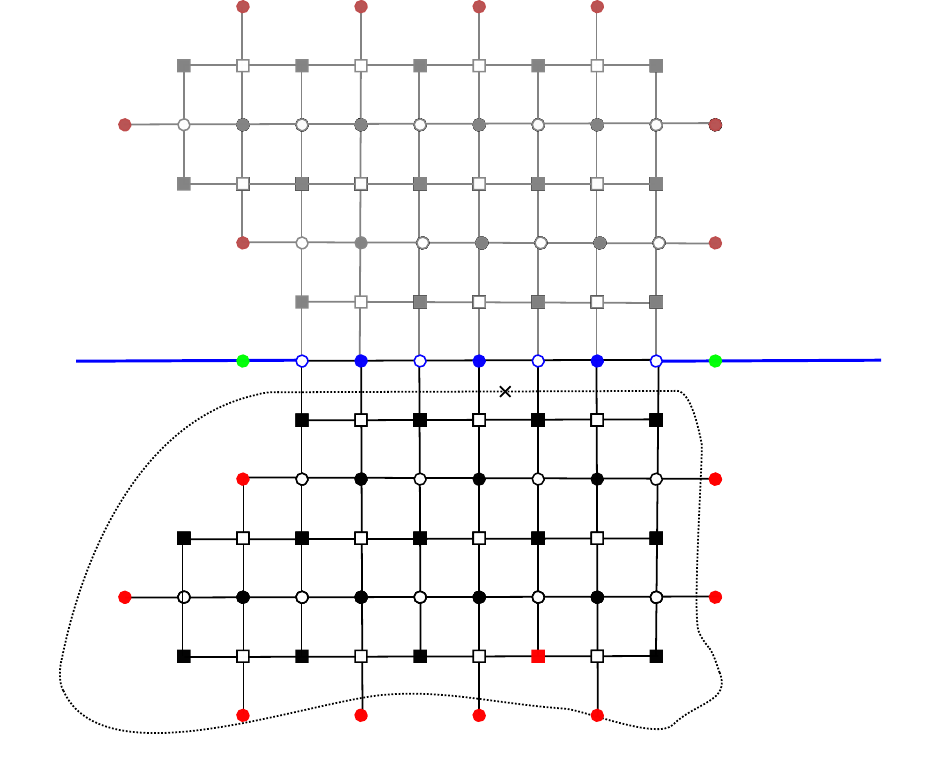}
    \put(8,25){$U$}
    \put(44,31){$\zp$}
    \put(20,36){\color{green}$b_l$}
    \put(60,36){\color{green}$b_r$}
    \put(45,6){\color{red}$b_0$}
    \end{overpic}
    \caption{The graph $\Ge$ near the point $\zp$ and the flat boundary is drawn in black. The boundary points on the flat boundary $\partial^{flat}B_0$ are represented as full blue circles. The rest of the boundary $\partial^{out}B_0$ is represented by full red circles. These are the point with Dirichlet boundary condition for the coupling function. The reflection axis is drawn as a blue line and the reflected vertices are drawn in grey for the bulk vertices and light red for the boundary vertices.}
    \label{fig:reflection}
\end{figure}
    We have to deal with the fact that $w$ might be close to the boundary so Lemma \ref{lem:Green} would not apply. Let $\partial^{flat}B_0$ be the set of the uppermost points of $\partial^{out}B_0$ (full blue circles in Figure \ref{fig:reflection}). They are all situated on the same horizontal line (full blue line in Figure \ref{fig:reflection}). We call $s$ the reflection along this line. We also define $\partial^{flat}W_0$ (the empty blue circles in Figure \ref{fig:reflection}) to be the vertices in $W_0(\ZZ^2)$ (the horizontal edges of $\eps\ZZ^2$) which are at distance $\eps/2$ from a point in $\partial^{flat}B_0$, and we define $\overline{W_0} = W_0 \cup \partial^{flat}W_0$, $\overline{W} = \overline{W_0} \cup W_1$. We consider a new graph $\Ge^s$ which has vertex set $B \cup \overline{W} \cup \partial^{flat}B_0 \cup s(B) \cup s(\overline{W}) \cup \{s(b_0)\}$ (the grey, black and blue points in Figure \ref{fig:reflection}) where we identify the points of $\partial^{flat} B_0$ and $\partial^{flat} W_0$ and their image (they are fixed by the reflection). The edges of $\Ge^s$ link points at distance $\eps/2$. The boundary of $\Ge^s$ is $\partial^{out} B_0 \cup s(\partial^{out}B_0) \cup \{b_0,b_l,b_r\} \setminus \partial^{flat}B_0$ (the red, light red and green points in Figure \ref{fig:reflection}) where $b_l,b_r$ are two added points (in green on Figure \ref{fig:reflection}): $b_l$ at distance $\eps$ to the left of the leftmost point in $\partial^{flat}B_0$ and $b_r$ at distance $\eps$ to the right of the rightmost point in $\partial^{flat}B_0$. We will write $B^s$ for the black vertices of $\Ge^s$, $\overline{\Ge^s}$ for the union of $\Ge^s$ and its boundary, etc.\par
    We can extend the discrete meromorphic function $\tK^{-1}(\cdot,w)$ on $\overline{B^s}$ by a discrete analogue of the Schwarz reflection principle for holomorphic functions. We define, for $b \in \overline{B}$,
    \be\label{def:extension}
        \tK^{-1}_s(s(b),w) = \left\{
        \begin{array}{ll}
            -\tK^{-1}(w,b) & \text{if } b \in \overline{B_0} \\
            \tK^{-1}(w,b)  & \text{if } b \in \overline{B_1} \\
        \end{array}
        \right.
    \ee
    and set $\tK^{-1}(b_l,w) = \tK^{-1}(b_r,w) = 0$. This extended function is well defined (the only problem could be for $b \in \partial^{flat} B_0$ for which $s(b)=b$, but in this case $\tK^{-1}(w,b) = \tK^{-1}(w,s(b)) = 0$). It is discrete meromorphic on $\G^s$: harmonicity of the real and imaginary parts (except at $w$ and $s(w)$) is preserved on and near the reflection axis (Definition \eqref{def:extension} is designed to match the Dirichlet and Neumann boundary conditions). Besides, $b \in B^s_1 \to \tK^{-1}_s(b,w)$ remains harmonically conjugated to $b \in B^s_0 \to \tK^{-1}_s(b,w)$ except at $w$ and $s(w)$: using the definition of the extension $\tK^{-1}$ and the discrete Cauchy Riemann equation \eqref{eq:discrete:CR}, if $w' \in W_0 \setminus \{w \}$, $s(w) \in W^s_0$ and if we let
    \bestar
        \partial_x \tK^{-1}(s(w'),w) = \tK^{-1}\left(s(w')+\frac{\eps}{2},w\right)-\tK^{-1}\left(s(w')-\frac{\eps}{2},w\right)
    \eestar
    then
    \bestar
        \begin{aligned}
            \partial_x \tK^{-1}(s(w'),w)
            &= \tK^{-1}\left(s\left(w'+\frac{\eps}{2}\right),w\right)-\tK^{-1}\left(s\left(w'-\frac{\eps}{2}\right),w\right)\\
            &= -\left[\tK^{-1}\left(w'+\frac{\eps}{2},w\right)-\tK^{-1}\left(w'-\frac{\eps}{2},w\right)\right]\\
            &= i\left[\tK^{-1}\left(w'+\frac{i\eps}{2},w\right)-\tK^{-1}\left(w'-\frac{i\eps}{2},w\right)\right]\\
            &= i\left[\tK^{-1}\left(s\left(w'+\frac{i\eps}{2}\right),w\right)-\tK^{-1}\left(s\left(w'-\frac{i\eps}{2}\right),w\right)\right]\\
            &= i\left[\tK^{-1}\left(s(w')-\frac{i\eps}{2},w\right)-\tK^{-1}\left(s(w')+\frac{i\eps}{2},w\right)\right]\\
            &= -i\left[\tK^{-1}\left(s(w')+\frac{i\eps}{2},w\right)-\tK^{-1}\left(s(w')-\frac{i\eps}{2},w\right)\right]
        \end{aligned}
    \eestar
    which is exactly the discrete Cauchy Riemann equation \eqref{eq:discrete:CR} at $s(w')$ (note how the reflection changes the signs only for vertical differences). This also holds for $w \in W_1$ (but there is an additional minus sign in the first line and no sign in the last line, which compensate).\\
    
    We now show that the extended coupling function $b \in B^s \to \tK^{-1}_s(b,w)$ coincides with the difference of two coupling functions on $B^s$. Since $\Ge^s$ is a Temperleyan approximation of $U \cup s(U)$ if we remove the vertex $b_0$, by Lemma \ref{lem:coupling:function} for fixed $w \in W_0 \cup s(W_0)$, the coupling function $b \in B^s \to H^s(b,w)$ is uniquely defined: its real part has $0$ boundary conditions on $\partial^{out} B_0 \cup s(\partial^{out}B_0) \cup \{b_l,b_r\} \setminus \partial^{flat}B_0$ and satisfies $\Delta H^s(\cdot,w) = \d_{w + \eps/2} - \d_{w - \eps/2}$ at all inner points $b \in B^s_0$. Its imaginary part is $0$ on $b_0$ and is harmonically conjugated with the real part except at $w$. We claim that for all $b \in B^s_0$,
    \be\label{eq:extended}
        \tK^{-1}_s(b,w) = H^s(b,w) - H^s(b,s(w)).
    \ee 
    Indeed, the two sides of the equation have the same Laplacian $\d_{w + \eps/2} - \d_{w - \eps/2} - \d_{s(w) + \eps/2} + \d_{s(w) - \eps/2}$ and both have zero boundary conditions  on $B_0^s$, so the real parts coincide. Hence, the difference $b \in B^s_1 \to H^s(b,w) - H^s(b,s(w))$ is harmonically conjugated (except at $w$ and $s(w)$) with $b \in B^s_0 \to \tK^{-1}_s(b,w) = H^s(b,w) - H^s(b,s(w))$ and it is zero at $b_0$, so it coincides with $b \in B^s_1 \to \tK^{-1}_s(b,w)$ by integration along any path from $b_0$ to $b$ avoiding $w$ and $s(w)$. Finally, Equation \ref{eq:extended} also holds for $b \in B^s_1$. 
\begin{remark}
    Observe that we did not impose any condition at $s(b_0)$ for $H^s(b,w)-H^s(b,s(w))$, but since equality \eqref{eq:extended} holds it must be $0$ a posteriori. This can be checked using symmetry arguments. 
\end{remark}
    $B_1^s$ is an approximating sequence for $U \cup s(U)$ so we can apply Lemma \ref{lem:Green} to conclude, because (and this is the key point of the proof) if $w$ is as in the statement of the Lemma, then both $w$ and $s(w)$ are at distance at least $\d/2$ from the boundary of $B^s$: the points near the flat part of the boundary around $\zp$ have become inner points of the graph $B^s$. In particular, the points in $B$ at distance at most $\d/2$ from $\zp$ are at distance at least $\d/2$ from the boundary of $B^s$, so we can apply Lemma \ref{lem:Green} with $\eta = \d/2$ to $H^s(\cdot,w)$ and $H^s(\cdot,s(w))$ since they are the coupling function for the Temperleyan approximation of $U \cup s(U)$ with a black vertex removed at the same location as before. 
    
    If we denote by $\wtilde{g^s}(u,v)$ the analytic function of $v$ whose real part is the Dirichlet Green function $g^s(u,v)$ on $U \cup s(U)$, we define $F_+^s$ and $F^s_-$ similarly to the definition before Lemma \ref{lem:Green}. If $w$ is at distance $O(\eps)$ from $u$ and $b$ is at distance $O(\eps)$ from $v$,
    \bestar
    \tK^{-1}(b,w) = \left\{
    \begin{array}{ll}
        \eps Re\left[F^s_+(u,v) + F^s_-(u,v) - F^s_+(s(u),v) - F^s_-(s(u),v)\right]& + O(\eps^2)\\
        &\text{if } b \in B_0, \\
        \eps i Im\left[F_+(u,v) + F_-(u,v) -F^s_+(s(u),v) - F^s_-(s(u),v)\right]& + O(\eps^2)\\
        & \text{if } b \in B_1. 
    \end{array}
    \right.
  \eestar
    where the $O(\eps^2)$ is uniform once $\d$ is fixed. Observe now (this is the continuous equivalent of what we have just done, i.e. Schwarz reflection principle in the continuum) that $\wtilde{g^s}(u,v) -\wtilde{g^s}(s(u),v) = \tilde{g}(u,v)$ when $u,v \in U$, because the left-hand side is analytic in $v$ on $U$, its real part $g^s(u,v)-g^s(s(u),v)$ has the same pole as $g(u,v)$ in $v$ (by definition as a Green function) and it is $0$ on the boundary of $U$. On the flat portion, it is true by symmetry: if $v$ is on the reflection axis, $g^s(u,v) = g^s(s(u),s(v)) = g^s(s(u),v)$. On the rest of the boundary both terms of the difference vanish since $v$ is also on the boundary of $U \cup s(U)$. Hence, $F_+^s(u,v) - F_+^s(s(u),v) = F_+(u,v)$ and $F_-^s(u,v)-F_-^s(u,v) = F_-(u,v)$ which concludes the proof in the case $w \in W_0$; Lemma \ref{lem:Green} holds up to the boundary.\par
    To deal with the case $w \in W_1$, the same argument works but now $\tK^{-1}(\cdot, w)_{|B_0}$ is the unique function on $B_0$ which is $0$ on $\partial^{out} B_0$ and whose Laplacian is $-i\d_{b=w + i\eps/2} +i \d_{w - i\eps/2}$ by equation \eqref{eq:W0} applied when $w \in W_1$ instead of $W \in W_0$. The remaining changes are straightforward.
\end{proof}

\section{Coupling function in piecewise Temperleyan domains.}\label{app:B}

In this final Appendix, we continue our analysis of the inverse Kasteleyn matrix in the small mesh limit, and in particular we deal with piecewise Temperleyan domains, which is needed for our shifted dimer model.\\

We use the setting of Section \ref{section:l&a}: $U^r$ is a simply connected open set symmetric with respect to the horizontal axis, $U = U^r \cap (\RR \times \RR_{>0})$. The boundary of $U$ can be partitioned in two connected arcs: $\partial U = D \sqcup N$ where $N = \partial U \cap (\RR \times \{0\})$ is the intersection with the horizontal axis. Recall that we defined two graphs approximating $U$: the Temperleyan approximation $\Ge^1$ with Dirichlet boundary conditions (and a vertex $b_0$ removed) and the piecewise Temperleyan approximation $\Ge^2$ with two convex white corners $v_1^*, v_2^*$. In both cases, we use the usual Kasteleyn phases of discrete holomorphy described in Equation \eqref{kast-eq-conplex-kast-weights}, and denote by $K^1, K^2$ the corresponding Kasteleyn matrices on $\Ge^1, \Ge^2$. Both $\Ge^1$ and $\Ge^2$ have at least one dimer configuration (for example because from any loops and arcs configuration $\om \in \Omega(\Ge)$ we can obtain two dimer covers of $\Ge^1$ and $\Ge^2$), hence $K^1$ and $K^2$ are invertible by \cite{Russ}. Moreover, the asymptotics of $(K^j)^{-1}$ on piecewise Temperleyan domains are computed and expressed in terms of $f_i^j(u,v)$ for $i \in \{0,1\},~j \in \{1,2\}$. These are the unique functions which are analytic in $u$ for fixed $v$, have a simple pole of residue $1/\pi$ at $u=v$ with no other pole on $\bar{U}$, are bounded near $v_1^*, v_2^*$ (for the $j=2$ case), and have the following boundary conditions: 
\begin{itemize}
    \item $\Re(f^1_0(\cdot,v) = 0$ on $D \sqcup N$
    \item $\Im(f^1_1(\cdot,v) = 0$ on $D \sqcup N$
    \item $\Re(f^2_0(\cdot,v) = 0$ on $D$ and $\Im(f^1_0(\cdot,v) = 0$ on $N$
    \item $\Im(f^2_1(\cdot,v) = 0$ on $D$ and $\Re(f^1_1(\cdot,v) = 0$ on $N$.
\end{itemize}
For $\tau = \pm 1$, $i \in \{1,2\}$, let
\bestar
    F_{\tau}^j = \frac{1}{2}(f_0^j + \tau f_1^j).
\eestar
Then, Theorem 6.1 of \cite{Russ} has the following corollary:
\begin{lemma}\label{lem:russ}
    Let $\eta > 0$ be fixed. If $u, v \in V(\Ge)$ are at distance at least $\eta$ from the boundary, and if $w$ and $b$ are within $O(\eps)$ of $u$ and $v$ respectively, for $j \in \{1,2\}$, $r,s \in \{\pm 1\}$, $w \in W_{\frac{1-r}{2}}$, $b \in B_{\frac{1-s}{2}}$,
    \bestar
    (K^j)^{-1}_{b,w} = \eps\left(F_+^j(u,v)+rF_-^j(u,v)+s\ol{F_-^j(u,v)}+rs\ol{F_+^j(u,v)}\right) + \Delta(w,b) + o(\eps)
    \eestar
    where the $o$ is uniform once $\eta$ is fixed and $\Delta$ is an error term depending only on $\eps$, $w \in W_0 \sqcup W_1, b \in B_0 \sqcup B_1$ (in particular $\Delta$ does not depend on $U$ or its Temperleyan discretization $\Ge$).
\end{lemma}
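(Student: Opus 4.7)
The plan is to derive Lemma \ref{lem:russ} as a direct consequence of Theorem 6.1 of \cite{Russ}, by carrying out a case analysis on the parities $(r,s)$ of $w$ and $b$, followed by an algebraic rearrangement in terms of $F_+^j$ and $F_-^j$.

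First I would recall Russkikh's result in our notation. The discrete function $b \mapsto (K^j)^{-1}(b, w)$ satisfies a discrete Cauchy--Riemann equation on the piecewise Temperleyan graph away from $w$, has a simple pole at $w$, and obeys Dirichlet (for $j=1$) or mixed Dirichlet/Neumann (for $j=2$) boundary conditions matching precisely those imposed on $f_0^j$ and $f_1^j$. In the bulk its values on $B_0$ (respectively $B_1$) play the role of real (respectively imaginary) parts of these continuum functions, and the parity of $w$ (i.e.\ whether $w \in W_0$ or $w \in W_1$) selects between $f_0^j$ and $f_1^j$. As a consequence, Theorem 6.1 of \cite{Russ} gives in each of the four parity cases $(r,s) \in \{\pm 1\}^2$ that $(K^j)^{-1}(b,w) - \Delta(w,b)$ is asymptotically $\eps$ times one of $2\Re(f_0^j(u,v))$, $2i\Im(f_0^j(u,v))$, $2\Re(f_1^j(u,v))$ or $2i\Im(f_1^j(u,v))$, with a $o(\eps)$ error uniform for $u, v$ at bulk distance $\geq \eta$ from the boundary.

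Next I would unify the four cases algebraically. From the definition $F_\tau^j = \tfrac{1}{2}(f_0^j + \tau f_1^j)$ one has $f_0^j = F_+^j + F_-^j$ and $f_1^j = F_+^j - F_-^j$, and a direct expansion yields the identity
\begin{equation*}
    F_+^j + r F_-^j + s\overline{F_-^j} + rs\overline{F_+^j}
    = \begin{cases}
        f_0^j + \overline{f_0^j} = 2\Re(f_0^j), & (r,s) = (1,1),\\
        f_0^j - \overline{f_0^j} = 2i\Im(f_0^j), & (r,s) = (1,-1),\\
        f_1^j + \overline{f_1^j} = 2\Re(f_1^j), & (r,s) = (-1,-1),\\
        f_1^j - \overline{f_1^j} = 2i\Im(f_1^j), & (r,s) = (-1,1).
    \end{cases}
\end{equation*}
Combining this with the previous step produces the uniform formula stated in the lemma.

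Finally, the universal term $\Delta(w, b)$ is to be identified as the local, domain-independent part of Russkikh's asymptotic expansion --- the piece encoding the singular behavior of the coupling function near the diagonal, governed by the full-plane inverse Kasteleyn matrix on $\eps\ZZ^2$, and depending on $w, b$ only through their sublattices and the mesh $\eps$. The main bookkeeping obstacle is to translate between the conventions of \cite{Russ} (sign of Kasteleyn phases, sublattice labels, normalization of $f_i^j$) and those of the present paper, and to verify that the decomposition into a universal piece plus an asymptotically linear bulk contribution of the above form indeed holds with no additional domain-dependent correction; beyond this, no new analysis is required, as all the hard work has been done in \cite{Russ}.
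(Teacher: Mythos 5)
Your proposal is correct and follows essentially the same route as the paper, which itself simply asserts Lemma \ref{lem:russ} as a corollary of Theorem 6.1 of \cite{Russ} without spelling out the steps. You correctly identify the two ingredients: (i) Russkikh's theorem furnishes the four parity-case asymptotics for $(K^j)^{-1}(b,w) - \Delta(w,b)$ in terms of $\Re$ and $\Im$ of $f_0^j$, $f_1^j$, with the universal near-diagonal term $\Delta$ isolated, and (ii) the algebraic identity relating $F_\pm^j = \tfrac12(f_0^j \pm f_1^j)$ to those four cases; your verification of the identity in each of the cases $(r,s)\in\{\pm1\}^2$ checks out, and this unifies the four formulas into the single displayed expression. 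The only piece you flag but do not carry out in detail — matching conventions with \cite{Russ} and confirming that $\Delta$ really is the domain-independent residue of the full-plane inverse Kasteleyn matrix on $\eps\ZZ^2$ — is exactly what the paper also defers, addressing it only in the remark following the lemma; so nothing substantive is missing.
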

\begin{remark}
     In the original statement of \cite{Russ},
\bestar
    \Delta(w,b) = \frac{2\eps}{\lambda}F^{\eps}_{\CC, w}(b)-\frac{\eps}{u-v}
\eestar
where $\lambda = e^{i\frac{\pi}{4}}$ and $F^{\eps}_{\CC,w}$ is the unique discrete holomorphic function on $\eps \ZZ^2$ tending to $0$ at infinity and with a pole $\lambda$ at $w$, which is known to be asymptotically equal to $\frac{\lambda}{2\pi(b-w)}$ when $\eps \to 0$ and $|b-w|\geq \eta$ for some $\eta >0$ independent of $\eps$. See Section 5 of \cite{Russ} for details. The error term $\Delta$ is also present in the original statement of Lemmas \ref{lem:Green} and \ref{lem:Green:boundary}, but we included it in the $o(\eps)$ since we never use these lemmas for close points and for all $\eta > 0$, for all $b,w$ such that $|b-w| \geq \eta$,
    \bestar
        |\Delta(w,b)| = o_{\eps \to 0}(\eps).
    \eestar
    uniformly once $\eta$ is fixed. Since we need to apply Lemma \ref{lem:russ} also for points $b,w$ at distance $O(\eps)$, we must include the error term.
\end{remark}

As in Lemma \ref{lem:Green:boundary}, when $U$ has some specific boundary conditions, the asymptotic estimates hold up to the boundary.
\begin{lemma}\label{lem:russ:boundary}
    Assume (as in Section \ref{section:l&a}) that there exists $\zp \in D$ and $\d > 0$ such that the boundary of $U$ is horizontal in the neighbourhood $B(\zp, \d)$ and that $U$ is contained in the half plane below this flat horizontal boundary. Assume that $\Ue$ also has a flat and horizontal boundary in $B(\zp,\d)$. If $u, v \in V(\Ge)$ are each at distance at most $\d/2$ of $\zp$ or at least $\d/2$ from the $\partial U$, and if $w$ and $b$ are within $O(\eps)$ of $u$ and $v$ respectively, for $j \in \{1,2\}$, $r,s \in \{\pm 1\}$, $w \in W_{\frac{1-r}{2}}$, $b \in B_{\frac{1-s}{2}}$,
    \bestar
    (K^j)^{-1}_{b,w} = \eps\left(F_+^j(u,v)+rF_-^j(u,v)+s\ol{F_-^j(u,v)}+rs\ol{F_+^j(u,v)}\right) + \Delta(w,b) + o(\eps)
    \eestar
    where the $o$ is uniform once $\d$ is fixed and $\Delta$ is the error term from Lemma \ref{lem:russ}.
\end{lemma}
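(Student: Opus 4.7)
The plan is to parallel the proof of Lemma \ref{lem:Green:boundary} in Appendix \ref{appendix:A}, using a discrete Schwarz reflection across the flat Dirichlet portion of $\partial U$ near $\zp$ to reduce the problem to an application of Lemma \ref{lem:russ} on a doubled domain. The key observation is that, since $\zp\in D$, both $K^1$ and $K^2$ impose Dirichlet conditions on the flat boundary near $\zp$, so the reflection formula \eqref{def:extension} applies uniformly in $j\in\{1,2\}$.

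First I would construct the doubled graph $\Ge^{j,s}$ by gluing $\Ge^j$ to its image $s(\Ge^j)$ under reflection across the flat axis near $\zp$, obtaining an approximation of $U\cup s(U)$ which is Temperleyan (with one removed vertex) when $j=1$ and piecewise Temperleyan (now with four white corners, two from $\Ge^2$ and two from $s(\Ge^2)$) when $j=2$. The antisymmetric extension of $(K^j)^{-1}(\cdot,w)$ to $\Ge^{j,s}$ given by the analog of \eqref{def:extension} remains discrete meromorphic with simple poles at $w$ and $s(w)$; by uniqueness of the coupling function on $\Ge^{j,s}$ (imposing Dirichlet conditions on the remaining boundary and the appropriate Neumann conditions on $N\cup s(N)$ in the $j=2$ case), one obtains
\begin{equation*}
(K^j)^{-1}_s(b,w)=H^{j,s}(b,w)-H^{j,s}(b,s(w)),
\end{equation*}
where $H^{j,s}$ denotes the coupling function on $\Ge^{j,s}$.

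Next I would apply Lemma \ref{lem:russ} to each term on the right. This is legitimate because, by assumption, both $w$ and $b$ are either within $\delta/2$ of $\zp$ or at distance $\geq\delta/2$ from $\partial U$, so that $w, s(w), b, s(b)$ are all at distance $\geq\delta/2$ from $\partial\Ge^{j,s}$. The assumption $|b-w|\geq\delta/2$ combined with the fact that both $u$ and $v$ lie on the same side of the flat axis (namely in $U$) also forces $|s(u)-v|\geq|u-v|\geq\delta/2$, so the auxiliary error $\Delta(s(w),b)$ from the second term is $o(\eps)$ uniformly, while the $\Delta(w,b)$ from the first term survives unchanged.

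It remains to identify the continuous limit by a Schwarz reflection for the functions $F_\pm^j$. Using that $\Re(f_0^j(\cdot,v))=0$ and $\Im(f_1^j(\cdot,v))=0$ on the flat portion of $\partial U$ (for both $j=1$ and $j=2$, since $\zp\in D$) together with uniqueness of $f_i^{j,s}$ on $U\cup s(U)$, I expect to obtain identities of the form
\begin{equation*}
F_{\pm}^j(u,v)=F_{\pm}^{j,s}(u,v)-F_{\pm}^{j,s}(s(u),v),\qquad u,v\in U,
\end{equation*}
and substituting these into the discrete asymptotic will produce the stated formula. The hard part will be carefully verifying this continuous reflection identity in the $j=2$ case, where the mixed Dirichlet/Neumann boundary conditions force $f_0^2$ and $f_1^2$ to transform differently across $N$ (and $s(N)$) than across $D$; one must also check that $\Ge^{2,s}$, now bearing four white corners rather than two, still falls within the scope of Theorem~6.1 of \cite{Russ} so that Lemma \ref{lem:russ} applies on the doubled graph.
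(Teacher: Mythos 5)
Your proof follows the same Schwarz-reflection strategy as the paper: glue $\Ge^j$ to its reflection $s(\Ge^j)$ across the flat Dirichlet arc near $\zp$, extend $(K^j)^{-1}(\cdot,w)$ antisymmetrically to identify it with $H^s(\cdot,w)-H^s(\cdot,s(w))$ on the doubled graph, apply Lemma \ref{lem:russ} to each term, and translate back via the continuum reflection identities $F^j_\pm(u,v)=F^{j,s}_\pm(u,v)-F^{j,s}_\pm(s(u),v)$ (which indeed hold uniformly in $j$ because $\zp\in D$, so both $K^1$ and $K^2$ impose Dirichlet conditions across the fold). Your flag that $\Ge^{2,s}$ carries four white corners and must still fall under Russkikh's hypotheses is a correct and useful sanity check.

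There is, however, a gap. You write ``The assumption $|b-w|\geq\delta/2$'' and use it to dispose of the auxiliary error $\Delta(s(w),b)$; but this hypothesis appears in Lemma \ref{lem:Green:boundary}, \emph{not} in Lemma \ref{lem:russ:boundary}. The whole point of keeping the universal term $\Delta(w,b)$ in Lemma \ref{lem:russ:boundary} (in contrast to Lemma \ref{lem:Green:boundary}) is to cover pairs $b,w$ at lattice distance $O(\eps)$, which genuinely arise in Section \ref{sec:shifted} when $b_i$ and $w_{i+1}$ lie in the same zipper packet $E_\eps(k_i)$ near $\zp$. Under your added assumption $|b-w|\geq\delta/2$ every $\Delta$ term is automatically $o(\eps)$ and the lemma trivialises to the form of Lemma \ref{lem:Green:boundary}; but in the genuinely new regime $|b-w|=O(\eps)$ near $\zp$, the reflection produces an extra term $-\Delta(s(w),b)$ with $|s(w)-b|=O(\eps)$, and this term is \emph{not} $o(\eps)$ (it is $O(1)$, like $\Delta(w,b)$ itself). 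So the argument as written establishes $(K^j)^{-1}_{b,w}=\eps(\cdots)+\Delta(w,b)-\Delta(s(w),b)+o(\eps)$ rather than the stated expansion, and Equation \eqref{eq:bound:K:russ} is not recovered for such pairs. To close the gap you would either need to show $\Delta(s(w),b)=o(\eps)$ in the configurations that actually occur, or argue directly that only the $j$-independence of the error and the residual $O(\eps)$ bound are used in Section \ref{sec:shifted}, so that the modified error term $\Delta(w,b)-\Delta(s(w),b)$ (still $j$-independent) is equally good.
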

In particular, it holds that for all $b,w$ at distance at most $\d/2$ of $\zp$ or at least $\geq \d/2$ from the $\partial U$, for all $j \in \{1,2\}$,
\be\label{eq:bound:K:russ}
    |(K^j)^{-1}_{w,b}-\Delta(w,b)| \leq C(\d)\eps
\ee
for some constant $C(\d)$ depending only on $\d$.

\begin{proof}
    The same argument as in the proof of Lemma \ref{lem:Green:boundary} can be applied: we skip most of the details. For fixed $w \in W(\Ge^j)$, for $j \in \{1,2\}$, the coupling function $(K^j)^{-1}(\cdot, w)$ can be extended into a discrete meromorphic function $(K^j_s)^{-1}(\cdot,w)$ on the graph $(\Ge^j)^s$ obtained by gluing together $\Ge^j$ and $s(\Ge^j)$ (the reflection of $\Ge^j$ along the flat part of $\partial U$). Then, $(K^j_s)^{-1}(\cdot, w)$ is a discrete meromorphic function on $B((\Ge^j)^s)$, with mixed Dirichlet/Neumann boundary conditions and two poles of respective residues $1/\pi$ and $-1/\pi$ at $w$ and $s(w)$, hence it coincides with the difference of the two coupling functions $H^s(\cdot, w)-H^s(\cdot, s(w)$ with the same mixed boundary conditions. Indeed, the difference 
    \bestar
        (K^j_s)^{-1}(\cdot, w) - (H^s(\cdot, w)-H^s(\cdot, s(w))
    \eestar
    is in the kernel of the (invertible) matrix $K^j$, hence it must vanish. We conclude by applying Lemma \ref{lem:russ} to $H^s(\cdot,w)$ and $H^s(\cdot,s(w))$.
\end{proof}

We finally explain how $F^j_{\tau}, \tau = \pm 1$ can be expressed in terms of the functions $F_{\tau}, \tau = \pm 1$ defined in the preceding section (see Equation \ref{eq:def:Fpm}). Recall that $U^r$ denotes the reflected domain, and denote by $f_0(u,v), f_1(u,v)$ the unique analytic functions of $u \in U^r$ having a simple pole of residue $1/\pi$ at $u=v$ and boundary conditions $\Re(f^r_0(\cdot,v) = 0$ on $\partial U^r$, $\Im(f^r_1(\cdot,v) = 0$ on $\partial U^r$. They satisfy
\bestar
    F_+(u,v) = f_0(u,v)+f_1(u,v) \quad ; \quad F_-(u,v) = f_0(u,v)-f_1(u,v).
\eestar
Since $U^r$ is invariant by reflection along the horizontal axis,
\bestar
\forall i \in \{1,2\},~\forall u,v \in U^r,~\ol{f^r_i(\ol{u},\ol{v})} = f^r_i(u,v).
\eestar
Checking the boundary conditions in each case, it holds that:
\begin{itemize}
    \item $f_0^1(u,v) = f_0(u,v)-f_0(u,\ol{v})$
    \item $f_1^1(u,v) = f_1(u,v)+f_1(u,\ol{v})$
    \item $f_0^2(u,v) = f_0(u,v)+f_0(u,\ol{v})$
    \item $f_1^2(u,v) = f_1(u,v)-f_1(u,\ol{v})$
\end{itemize}
Or, to put it shortly, for $i \in \{0,1\}$, $j \in \{1,2\}$,
\bestar
    f_i^j(u,v) = f_i(u,v) + (-1)^{i+j}f_i(u, \ol{v})
\eestar
which implies that for $\tau \in \{\pm 1\}$, $u,v \in U$,
\be\label{eq:F^j}
    F^j_{\tau} 
    = F_{\tau}(u,v) + (-1)^j F_{-\tau}(u, \ol{v}).
\ee

\def\cprime{$'$}

\end{document}